% !TEX encoding = UTF-8 Unicode 
%\documentclass[final,leqno]{siamltex704}
%\documentclass[final,leqno]{siamltex1213}
%\documentclass[draft,11pt]{siamltex704}
%\documentclass[11pt]{siamltex704}
%\documentclass[11pt]{article}
%\documentclass[final,leqno,onefignum,onetabnum]{siamltex1213}

%\documentclass[final,leqno,11pt]{siamltex}
\documentclass[final,leqno]{siamltex}

%%% comment out the next two lines for the final siam version
%\usepackage[left=1in,top=1in,right=1in,bottom=1in,dvips,letterpaper]{geometry}
%\usepackage{setspace}\onehalfspacing

\usepackage[algo2e,linesnumbered,vlined,ruled]{algorithm2e}
\usepackage{graphicx,graphics,subfigure}
\usepackage{hyperref,url}
\usepackage{epsf,epstopdf}

\usepackage{comment}
\usepackage{times,hyperref}
\usepackage{amsmath,amsxtra,amsfonts,amscd,amssymb,bm}
\usepackage[usenames]{color}
\usepackage[normalem]{ulem}
\usepackage{exscale}

\newcommand{\be}{\begin{equation}}
\newcommand{\ee}{\end{equation}}
\newcommand{\bee}{\begin{equation*}}
\newcommand{\eee}{\end{equation*}}
\newcommand{\bea}{\begin{eqnarray}}
\newcommand{\eea}{\end{eqnarray}}
\newcommand{\beaa}{\begin{eqnarray*}}
\newcommand{\eeaa}{\end{eqnarray*}}

\newcommand{\st}{\,\mathbf{s.t.}\,}  
\newcommand{\R}{\mathbb{R}}  
  
\newcommand{\cC}{\mathcal{C}}  
  
\newcommand{\cS}{{\mathcal{S}}}

\newcommand{\RR}{\mathrm{RR}}
\newcommand{\tr}{\mathrm{tr}}
\newcommand{\zz}{^{\mathrm{T}}}

\newcommand{\fs}{^2_{\mathrm{F}}}

\newcommand{\ARR}{\textsc{Arr}} 
\newcommand{\spa}{\mathrm{span}} 
\newcommand{\tol}{\mathrm{tol}} 
\newcommand{\maxres}{\mathtt{maxres}} 
\newcommand{\res}{\mathtt{res}} 
\newcommand{\orth}{\mathbf{orth}}

\newcommand{\Xp}{\mathbf{X}_{p}}
\newcommand{\Yp}{\mathbf{Y}_{p}}
\newcommand{\cR}{{\cal{R}}}
\newcommand{\poly}{\textsc{poly}}
\newcommand{\gn}{\textsc{gn}}
\newcommand{\eigs}{\textsc{eigs}}

\newcommand{\mpm}{\textsc{mpm}}
\newcommand{\feast}{\textsc{feast}}

\newcommand{\arrabit}{\textsc{arrabit}}
\newcommand{\arpack}{\textsc{arpack}}

\DeclareMathOperator*{\argmax}{arg\,max}
\allowdisplaybreaks
%\graphicspath{ {./FIG/} }

\begin{document}

\title{
Block algorithms with augmented Rayleigh-Ritz projections
for large-scale eigenpair computation
} 

\author{
Zaiwen Wen\footnotemark[3]  
\and  
Yin Zhang\footnotemark[4]
}

\renewcommand{\thefootnote}{\fnsymbol{footnote}}

\footnotetext[3]{Beijing International Center for Mathematical
Research, Peking University, Beijing, CHINA (wenzw@pku.edu.cn).
Research supported in part by NSFC grants 11322109 and 11421101, and by the National
Basic Research Project under the grant 2015CB856000.}

\footnotetext[4]{Department of Computational and Applied Mathematics,
Rice University, Houston, UNITED STATES (yzhang@rice.edu). Research supported
in part by NSF DMS-1115950 and NSF DMS-1418724.}
\renewcommand{\thefootnote}{\arabic{footnote}}

%\date{\today} 
\maketitle

\begin{abstract}

Most iterative algorithms for eigenpair computation consist of two main steps: a subspace update (SU) step that generates bases for approximate eigenspaces, followed by a Rayleigh-Ritz (RR) projection step that extracts approximate eigenpairs.  So far the predominant methodology for the SU step is based on Krylov subspaces that builds orthonormal bases piece by piece in a sequential manner.  In this work, we investigate block methods in the SU step that allow a higher level of concurrency than what is reachable by Krylov subspace methods.  To achieve a competitive speed, we propose an augmented Rayleigh-Ritz (ARR) procedure and analyze its rate of convergence under realistic conditions.  Combining this ARR procedure with a set of polynomial accelerators, as well as utilizing a few other techniques such as continuation and deflation, we construct a block algorithm designed to reduce the number of RR steps and elevate concurrency in the SU steps.  Extensive computational experiments are conducted in Matlab on a representative set of test problems to evaluate the performance of two variants of our algorithm in comparison to two well-established, high-quality eigensolvers \arpack~ and \feast.  Numerical results, obtained on a many-core computer without explicit code parallelization, show that when computing a relatively large number of eigenpairs, the performance of our algorithms is competitive with, and frequently superior to, that of the two state-of-the-art eigensolvers.

\end{abstract}

%\begin{keywords} 
%Extreme eigenpairs, Principal SVD,
%Power method, Polynomial Acceleration,
%Orthogonalization, Rayleigh-Ritz procedure,
%Optimal Scalability.
%\end{keywords}
%\begin{AM} 15A18,  65F15,  65K05, 90C06 \end{AM}

\pagestyle{myheadings}
\thispagestyle{plain}
\markboth{Z. WEN, AND Y. ZHANG}
{Block algorithms with an ARR procedure
for large-scale exterior eigenpair computation
}

\section{Introduction}

For a given real symmetric matrix $A \in \R^{n\times n}$, let 
$\lambda_1, \lambda_2, \cdots, \lambda_n$ be the eigenvalues of $A$
sorted in an descending order: $ \lambda_1 \ge \lambda_2 \ge \cdots \ge
 \lambda_n$, and $q_1, \ldots, q_n\in \R^n$  be the corresponding eigenvectors 
such that $A q_i=\lambda_i q_i$, $\|q_i\|_2=1$, $i=1,\ldots,n$ and $q_i^T q_j=0$ 
for $i\neq j$.  The eigenvalue decomposition of $A$ is defined as 
$A = Q_n\Lambda_n Q_n^T$, where, for any integer $i \in [1,n]$, 
\begin{equation} \label{eq:Q}
Q_i = [q_1, \; q_2, \; \ldots, q_i] \in \R^{n\times i}, \; \mbox{  }  \; 
\Lambda_i = \diag(\lambda_1,\lambda_2, \ldots, \lambda_i) \in \R^{i \times i},
\end{equation}
where $\diag(\cdot)$ denotes a diagonal matrix with its arguments on the diagonal.
For simplicity, we also write $A = Q\Lambda Q^T$ 
where $Q=Q_n$ and $\Lambda=\Lambda_n$.
In this paper, we consider $A$ to be large-scale, which usually implies that $A$ is 
sparse.  Since eigenvectors are generally dense, in practical applications, instead 
of computing all $n$ eigenpairs of $A$,  it is only realistic to compute $k \ll n$ 
eigenpairs corresponding to $k$ largest or smallest eigenvalues of $A$.  
Fortunately, these so-called exterior (or extreme) eigenpairs of $A$ often contain 
the most relevant or valuable information about the underlying system or dataset 
represented by the matrix $A$.   As the problem size $n$ becomes ever larger, 
the scalability of algorithms with respect to $k$ has become a critical issue
even though $k$ remains a small portion of $n$.  

Most algorithms for computing a subset of eigenpairs of large matrices are
iterative in which each iteration consists of two main steps: 
a subspace update step and a projection step.  The subspace update step 
varies from method to method but with a common goal in finding a matrix 
$X\in \R^{n\times k}$ so that its column space is a good approximation to 
the $k$-dimensional eigenspace spanned by $k$ desired eigenvectors.
Once $X$ is obtained and orthonormalized, the projection step, 
often referred to as the Rayleigh-Ritz (RR) procedure,
aims to extract from $X$ a set of approximate eigenpairs 
(see more details in Section~\ref{sec:overview})
that are optimal in a sense.
More complete treatments of iterative algorithms for computing subsets 
of eigenpairs can be found, for example, in the books
\cite{Baibook2000,parlett,Saad:1992:PWS,Demmel:1997,Stewart:1998:SIAM}.

At present, the predominant methodology for subspace updating 
is still Krylov subspace methods, as represented by Lanczos 
type methods~\cite{Lanczos:1950,Sorensenetall1997} for real 
symmetric matrices.   These methods generate an orthonormal matrix 
$X$ one (or a few) column at a time in a sequential mode.   
Along the way, each column is multiplies by the matrix $A$ and made 
orthogonal to all the previous columns. In contrast to Krylov subspace 
methods, block methods, as represented by the classic simultaneous 
subspace iteration method~\cite{Rutishauser1969}, carry out the 
multiplications of $A$ to all columns of $X$ at the same time in a batch 
mode.   As such, block methods generally demand a lower level of
communication intensity.

The operation of the sparse matrix $A$ multiplying a vector, 
or SpMV, used to be the most relevant complexity measure for 
algorithm efficiency.   As Krylov subspace methods generally 
tend to require considerably fewer SpMVs than block methods do, 
they had naturally become the methodology of choice for 
the past a few decades even up to date.
However, the evolution of modern computer architectures, 
particularly the emergence of multi/many-core architectures, 
has seriously eroded the relevance of SpMV (and arithmetic
operations in general) as a leading complexity measure, as
communication costs have, gradually but surely, become 
more and more predominant.

The purpose of this work is to construct, analyze and test a 
framework for block algorithms that can efficiently, reliably and 
accurately compute a relatively large number of exterior eigenpairs of 
large-scale matrices.  The algorithm framework is constructed to take
advantages of multi/many-core or parallel computers, although
a study of parallel scalability itself will be left as a future topic.  
It appears widely accepted that a key property hindering the 
competitiveness of block methods is that their convergence 
can become intolerably slow when decay rates in relevant 
eigenvalues are excessively flat.  A central task of our algorithm 
construction is to rectify this issue of slow convergence.

Our framework starts with an outer iteration loop that features
an enhanced RR step called the augmented Rayleigh-Ritz (ARR) 
projection which can provably accelerate convergence under mild 
conditions.  For the SU step, we consider two block iteration schemes 
whose computational cost is dominated by block SpMVs: 
(i) the classic power method applied to multiple vectors without periodic 
orthogonalization, and (ii) a recently proposed Gauss-Newton method.  
For further acceleration, we apply our block SU schemes to a set of 
polynomial accelerators, say $\rho(A)$, aiming to suppress the magnitudes 
of $\rho(\lambda_{j})$ where $\lambda_{j}$'s are the unwanted eigenvalue 
of $A$ for $j > k$.  In addition, a deflation scheme is utilized 
to enhance the algorithm's efficiency.  
Some of these techniques have been studied in the literature over the years 
(e.g. \cite{Saad:1984,chebydav} on polynomial filters), and are relatively well 
understood.  In practice, however, it is still a nontrivial task to integrate all the 
aforementioned components into an efficient and robust eigensolver.  
For example, an effective use of a set of polynomial filters involves the choice 
of polynomial types and degrees, and the estimations of intervals in which
eigenvalues are to be promoted or suppressed.   There are quite a number of
choices to be made and parameters to be chosen that can significantly impact 
algorithm performance.

Specifically, our main contributions are summarized as follows. 
\begin{enumerate}
\item
An augmented Rayleigh-Ritz (ARR) procedure is proposed and analyzed
that provably speeds up convergence without increasing the block size of 
the iterate matrix $X$ in the SU step (thus without increasing the cost of 
SU steps).  This ARR procedure can significantly reduce the number of 
RR projections needed, at the cost of increasing the size of 
a few RR calls.

\item
A versatile and efficient algorithmic framework is constructed that 
can accommodate different block methods for subspace updating.
In particular, we revitalize the power method as an exceptionally 
competitive choice for a high level of concurrency.  Besides ARR, 
our framework features several important components, including
\begin{itemize}
\item
a set of low-degree, non-Chebyshev polynomial accelerators that 
seem less sensitive to erroneous intervals than the classic 
Chebyshev polynomials;
 
\item 
a bold stoping rule for SU steps that demands no periodic 
orthogonalizations and welcomes a (near) loss of numerical rank.
\end{itemize}

\end{enumerate}

With regard to the issue of basis orthogonalization, we recall
that in traditional block methods such as the classic subspace 
iteration, orthogonalization is performed either at every iteration 
or frequently enough to prevent the iterate matrix $X$ from losing 
rank.  On the contrary, our algorithms aim to make $X$ numerically 
rank-deficient right before performing an RR projection.

%\subsection{Organization and Notation}
The rest of this paper is organized as follows. An overview of relevant iterative 
algorithms for eigenpair computation is presented in Section \ref{sec:overview}.
The ARR procedure and our algorithm framework are proposed in
Section \ref{sec:MPM}. We analyze the ARR procedure in Section \ref{sec:ARR}.
The polynomial accelerators used by us are given in Section \ref{sec:poly}.
A detailed pseudocode for our algorithm is outlined in Section \ref{sec:alg}. 
Numerical results are presented in Section \ref{sec:num}.  
Finally, we conclude the paper in Section \ref{sec:con}.

%%%%
\section{Overview of Iterative Algorithms for Eigenpair Computation}
\label{sec:overview}

Algorithms for eigenvalue problem have been extensively studied for 
decades.  We will only briefly review a small subset of them that are
most closely related to the present work.

%Recall that by our notation the eigen-decomposition of a symmetric
%$A \in\R^{n\times n}$ is $A = Q_n\Lambda_n Q_n^{T}$, where 
%$Q_n$ and $\Lambda_n$ are defined in \eqref{eq:Q}.
Without loss of generality,  we assume for convenience
that $A$ is positive definite (after a shift if necessary).
Our task is to compute $k$ largest eigenpairs $(Q_k, \Lambda_k)$ 
for some $k\ll n$ where by definition  
$A Q_k = Q_k \Lambda_k$ and $Q_k^T Q_k = I\in\R^{k\times k}$. 
Replacing $A$ by a suitable function of $A$, say $\lambda_{1}I-A$, 
one can also in principle apply the same algorithms to finding $k$ 
smallest eigenpairs as well.

An RR step is to extract approximate eigenpairs, called Ritz-pairs, 
from a given matrix $Z\in\R^{n\times m}$ whose range space, 
$\cR(Z)$, is supposedly an approximation to a desired 
$m$-dimensional eigenspace of $A$.  Let $\orth(Z)$ be the 
set of orthonormal bases for the range space of $Z$.  The RR procedure is 
described as Algorithm~\ref{RR} below, which is also denoted by a map 
$(Y, \Sigma)=\RR(A,Z)$ where the output $(Y, \Sigma)$ is a Ritz pair block.
\begin{algorithm2e}[htb!]
\label{RR}
%\footnotesize
\caption{ Rayleigh-Ritz procedure: $(Y, \Sigma)=\RR(A,Z)$}
\SetKwInOut{Input}{input}\SetKwInOut{Output}{output}
\SetKwComment{Comment}{}{}
\BlankLine
Given $Z\in\R^{n\times m}$, orthonormalize $Z$ to obtain $U \in\orth(Z)$.\\
Compute $H = U^{T}AU\in\R^{m\times m}$, the projection of $A$ 
onto $\orth(Z)$.\\ 
Compute the eigen-decomposition $H=V^T \Sigma V$,
where $V^{T}V=I$ and $\Sigma$ is diagonal.\\
Assemble the Ritz pairs $(Y,\Sigma)$ where $Y = UV\in\R^{n\times m}$ 
satisfies $Y^{T}Y=I$.
\end{algorithm2e}

It is known (see \cite{parlett}, for example) that Ritz pairs are, in a certain sense, 
optimal approximations to eigenpairs in $\cR(Z)$, the column space of $Z$.

%%%%%%%
\subsection{Krylov Subspace Methods}

Krylov subspaces are the foundation of several state-of-the-art solvers for
large-scale eigenvalue calculations. By definition, for given matrix 
$A \in \R^{n\times n}$ and vector $v \in \R^n$, the Krylov subspace of 
order $k$ is $\spa\{v, Av, A^2v, \ldots, A^{k-1}v\}$.
Typical Krylov subspace methods include  Arnoldi algorithm
for general matrices (e.g., \cite{Sorensenetall1997,Lehoucq2001}) and 
Lanczos algorithm for symmetric (or Hermitian) matrices (e.g.,
\cite{Sorensen1996,Larsen1998}).  In either algorithm,  orthonormal bases 
for Krylov subspaces are generated through a Gram-Schmidt type process.
Jacobi-Davidson methods (e.g., \cite{Notay2007,Stath1994}) are based on 
a different framework, but they too rely on Krylov subspace methodologies 
to solve linear systems at every iteration.

As is mentioned in the introduction, Krylov-subspace type methods are 
generally most efficient in terms of the number of SpMVs (sparse 
matrix-dense vector multiplications).  Indeed, they remain the method of 
choice for computing a small number eigenpairs.  However, due to the 
sequential process of generating orthonormal bases, Krylov-subspace 
type methods incur a low degree of concurrency, especially as the dimension 
$k$ becomes relatively large. To improve concurrency, multiple-vector 
versions of these algorithms have been developed where each single 
vector in matrix-vector multiplication is replaced by a small number of 
multiple vectors.  Nevertheless, such a remedy can only provide a 
limited relief in the face of the inherent scalability barrier as $k$ grows.
%Nevertheless, the computational bottlenecks are (i) the construction and
%maintenance of orthonormal bases for Krylov subspaces and (ii) the repeated 
%RR steps to compute approximate eigenpairs, since BLAS2 may account for a 
%large proportion of computational cost and the amount of parallelism is still 
%fundamentally limited.
Another well-known limitation of Krylov subspace methods is the difficulty to 
warm-start them from a given subspace.  Warm-starting is important in an 
iterative setting in order to take advantages of available information 
computed at previous iterations.

%%%%%%
\subsection{Classic Subspace Iteration}

The simple (or simultaneous) subspace iteration (SSI) method (see
\cite{Rutishauser1969,Rutishauser1970,Stewart1976,StewartJennings1981},
for example) %\cite{Saad:1992:PWS} 
extends the idea of the power method which computes a single eigenpair
corresponding to the largest eigenvalue (in magnitude).  Starting from 
an initial (random) matrix $U$, SSI performs repeated matrix multiplications 
$AU$, followed by periodic orthogonalizations and RR projections.  
The main purpose of orthogonalization is to prevent the iterate matrix $U$
from losing rank numerically.  In addition, since the rates of convergence
for different eigenpairs are uneven, numerically converged eigenvectors 
can be deflated after each RR projection.  A version of SSI algorithm
is presented as Algorithm \ref{SI-2} below, following the description in
\cite{Stewart:1998:SIAM}.

\begin{algorithm2e}[htb!]
\label{SI-2}
%\small
\caption{Subspace Iteration}% (more elaborate version)
\SetKwInOut{Input}{input}\SetKwInOut{Output}{output}
\SetKwComment{Comment}{}{}
\BlankLine
Initialize orthonormal matrix $U \in \R^{n \times m}$ with $m = k + q \ge k$.\\
\While{the number of converged eigenpairs is less than $k$,}{
\While{convergence is not expected,}{
\While{the columns of $U$ are sufficiently independent,}{
Compute $U = AU$}
Orthogonalize the columns of $U$.}
Perform an RR step using $U$. \\
Check convergence and deflate.
}
\end{algorithm2e}

In the above SSI framework, $q$ extra vectors, often called guard vectors,
are added into iterations to help improve convergence at the price of 
increasing the iteration cost.

A  main advantage of SSI is the use of simultaneous matrix-block multiplications
instead of individual matrix-vector multiplications. It enables fast
memory access and highly parallelizable computation on modern computer
architectures. Furthermore, SSI method has a guaranteed convergence to the 
largest $k$ eigenpairs from any generic starting point as long as there is a 
gap between the $k$-th and the $(k+1)$-th eigenvalues of $A$.  
As is points out in \cite{Stewart:1998:SIAM}, 
``combined with shift-and-invert enhancement or Chebyshev 
acceleration, it sometimes wins the race''.
However, a severe shortcoming of the SSI method is that its convergence 
speed depends critically on eigenvalue distributions that can, and often does, 
become intolerably slow in the face of unfavorable eigenvalue distributions. 
Thus far, this drawback has essentially prevented the SSI method from 
being used as a computational engine to build robust, reliable and 
efficient general-purpose eigensolvers. 

\subsection{Trace Maximization Methods}\label{sec:tracemax}

Computing a $k$-dimensional eigenspace associated with $k$
largest eigenvalues of $A$ is equivalent to solving an orthogonality 
constrained trace maximization problem:
\begin{equation}\label{tracemax}
\max_{X\in \R^{n \times k}} \tr(X^TAX), ~\st~ X^TX = I.
\end{equation}
%The first-order necessary conditions of \eqref{tracemax} for optimality are
%$AX = X\Lambda$ and $X^TX = I$,
%where $\Lambda=X^TAX \in \R^{k\times k}$ is the matrix of Lagrangian 
%multiplier.  Once the $k\times k$ matrix $\Lambda$ is diagonalized, the 
%matrix pair $(\Lambda,X)$ will deliver $k$ eigenpairs of $A$.
This formulation can be easily extended to solving the {\em generalized eigenvalue problem} where $X^{T}X=I$ is replace by $X^TBX=I$ for a symmetric positive definite matrix $B \in \R^{n\times n}$.  When maximization is changed to minimization, one computes an eigenspace associated with $k$ smallest eigenvalues.  The algorithm TraceMin~\cite{tracemin} solves
the trace minimization problem using a Newton type method.

Some block algorithms have been developed based on solving \eqref{tracemax}, 
include the locally optimal block preconditioned conjugate gradient method 
(LOBPCG)~\cite{LOBPCG} and more recently the limited memory block Krylov 
subspace optimization method (LMSVD)~\cite{LMSVD}.   At each iteration, 
these methods solve a subspace trace maximization problem of the form
\be \label{eq:sub2}
 Y = \argmax_{X \in \R^{n\times k}}
 \left\{\tr(X^T AX):  X\zz X = I, \; X \in \cS\right\},
\ee
where $X \in\cS$ means that each column of $X$ is in the
given subspace $\cS$ which varies from method to method.  
LOBPCG constructs $\cS$ as the span of the two most recent 
iterates $X^{(i-1)}$ and $X^{(i)}$, and the residual at $X^{(i)}$, 
which is essentially equivalent to
\begin{equation}\label{S:lobpcg}
 \cS = \spa \left\{X^{(i-1)},X^{(i)},A X^{(i)}\right\},
\end{equation}
where the term $AX^{(i)}$ may be pre-multiplied by a pre-conditioning matrix. 
In the LMSVD method, on the other hand, the subspace $\cS$ is spanned by 
the current $i$-th iterate and the previous $p$ iterates; i.e.,
\begin{equation}\label{S:lmsvd}
\cS = \spa \left\{X^{(i)},X^{(i-1)},...,X^{(i-p)} \right\},
\end{equation}
In general, the subspace $\cS$ should be constructed such that the cost of
solving \eqref{eq:sub2} can be kept relatively low.
The parallel scalability of these algorithms, although improved from that of Krylov
subspace methods, is now limited by the frequent use of basis orthogonalizations 
and RR projections involving $m\times m$ matrices where $m$ is the dimension
of the subspace $\cS$ (for example, $m=3k$ in LOBPCG). 

%%%%%%
\subsection{Polynomial Acceleration}

Polynomial filtering has been used in eigenvalue computation in various ways
(see, for example, \cite{Saad:1984,Stewart:1998:SIAM,chebydav,FangSaad2012}).
For a polynomial function $\rho(t): \R \to \R$ and a symmetric matrix with
eigenvalue decomposition $A=Q\Lambda Q^T$, it holds that 
\begin{equation}\label{eq:poly-A}
  \rho(A) = Q \rho(\Lambda) Q^{T} = \sum_{i=1}^n \rho(\lambda_i) q_i q_i^T,
\end{equation}
where $\rho(\Lambda)= \diag(\rho(\lambda_1),\rho(\lambda_2), \ldots, \rho(\lambda_n))$.
By choosing a suitable polynomial function $\rho(t)$ and replacing $A$ by $\rho(A)$, 
we can change the original eigenvalue distribution into a more favorable one at 
a cost.  To illustrate the idea of polynomial filtering,
suppose that $\rho(t)$ is a good approximation to the step function that is
one on the interval $[\lambda_k,\lambda_1]$ and zero otherwise.
%\[\psi(t) = \begin{cases}1, & t \in [\lambda_k,\lambda_1],\\
%   0, &\mbox{ otherwise. }
% \end{cases} 
%\]
For a generic initial matrix $X \in \R^{n\times k}$, it follows from \eqref{eq:poly-A} 
that $\rho(A)X \approx Q_kQ_k^T X$, which would be an approximate basis for
the desired eigenspace. In practice, however, approximating a non-smooth step 
function by polynomials is an intricate and demanding task which does not always
lead to efficient algorithms.

For the purpose of convergence acceleration, the most often used polynomials 
are the Chebyshev polynomials (of the first kind), defined by the three-term recursion: 
\begin{equation}\label{eq:cheby-poly} 
  \rho_{d+1}(t)=2t  \rho_{d}(t)-\rho_{d-1}(t), \;\; d \ge 1,
\end{equation}
where $\rho_0(t)=1$ and $\rho_1(t)=t$.    Some recent works that use Chebyshev 
polynomials include \cite{chebydav,FangSaad2012}, for example.

%%%%%%
\subsection{FEAST}

The \feast~ algorithm~\cite{Eric2009,PingEric2014} is based on complex
contour integrals for computing all eigenvalues in a given interval
$[a,b] \subset \R$ and their corresponding eigenvectors.  It is equivalent
to using a rational function filter in subspace iteration.

Let $\cC$ be the circle on the complex plane centered at $c=\frac{a+b}{2}$ 
with radius $r=\frac{b-a}{2}$, which can be parameterized by the function
$\phi(t) = c + r e^{\iota\frac{\pi}{2}(1+t)}$ for $t \in [-1,3]$ where $\iota^2 = -1$
is the imaginary unit.  By the Cauchy integral theorem, for any $\mu\notin \cC$
%the contour integral (in the counterclockwise direction) satisfies 
\begin{equation*}
  \frac{1}{2\pi \iota} \oint_{\cC} \frac{1}{z-\mu} dz 
 =  \frac{1}{2\pi \iota} \int_{-1}^1 \left[ \frac{
  \phi'(t)}{\phi(t)-\mu} - \frac{
  \overline{\phi'(t)}}{\overline{\phi(t)}-\mu}\right] dt 
 = \begin{cases} 1, & \mbox{ if } |\mu- c|<r \\
  0, &   \mbox{ if } |\mu-c|> r
  \end{cases}, 
\end{equation*}
where the integral on $[1,3]$ has been equivalently transformed into $[-1,1]$.
Applying a $q$-point Gauss-Legendre quadrature formula with weight-node pairs
$(w_l,t_l)$, $l=1,2,\ldots,q$, such that $w_l > 0$ and $t_l \in (-1,1)$, the above
integral can be approximated by the rational function
\begin{equation*}
\rho(\mu)=\sum_{l=1}^q \left( \frac{\sigma_l}{\phi_l-\mu} -
  \frac{\overline{\sigma_l}}{\overline{\phi_l}-\mu}  \right),
\end{equation*}
where $\phi_l=\phi(t_l)$ and $\sigma_l=w_l\phi'(t_l)/(2\pi\iota)$. 
Since none of $\phi_l$'s is real and $A$ is symmetric,
the matrices $\phi_l I - A$ and $\overline{\phi_l} I - A$ are all
invertible for $l=1,2,\ldots,q$.  Therefore, 
\begin{equation}\label{eq:feast-rho}
	\rho(A)=\sum_{l=1}^q  \sigma_l (\phi_l I - A)^{-1} - \sum_{l=1}^q
	\overline{\sigma_l} (\overline{\phi_l} I - A)^{-1}
\end{equation}
is a rational function filter approximating a desired step function on the real line. 
The application of this filter to $X\in \R^{n\times m}$, i.e., computing $\rho(A)X$, 
will require solving $q$ (since all quantities involved are real) 
linear systems of equations with $m$ right-hand sides each.
It is notable that these linear systems could be solved independently in parallel.

In order to compute all eigenpairs in an interval $[a,b]$, \feast~ need to
estimate the number of eigenvalues in the interval $[a,b]$. It repeatedly 
applies the rational filter $X = \rho(A)X$, followed by an RR projection. 
A high-level summary of the \feast~ algorithm is presented as Algorithm
 \ref{FEAST}.  

\begin{algorithm2e}[htb!]
\label{FEAST}
%\small
\caption{A abstract version of \feast}
\SetKwInOut{Input}{input}\SetKwInOut{Output}{output}
\SetKwComment{Comment}{}{}
\BlankLine
Input $[a, b]$ and $m$ -- estimated number of eigenvalues in $[a,b]$. \\ 
Choose a Gauss-Legendre quadrature formula with $q$ nodes. \\
Initialize a matrix $X \in \R^{n \times m}$.\\
\While{not ``converged'',}{
Compute $X = \rho(A)X$ with $\rho(\cdot)$ given in \eqref{eq:feast-rho}. \\
Do RR projection using $X$ to extract Ritz pairs. 
}
\end{algorithm2e}

It should be clear that the performance of \feast~ depends strongly on the 
efficiency of solving the linear systems of equations involved in applying
the rational filter $\rho(A)$ to $X$.   In addition, in order to compute the $k$ 
largest eigenpairs, for example, one need to supply \feast~ with an interval 
$[a,b] \supseteq [\lambda_{k},\lambda_{1}]$.  The quality of this interval $[a,b]$
could have a significant effect on the performance of \feast.

%%%%%%
\subsection{A Gauss-Newton Algorithm}\label{sec:GN}
A Gauss-Newton (GN) algorithm is recently proposed in 
\cite{SLRPGN} to compute the eigenspace associated with 
$k$ largest eigenvalues of $A$ based on solving the
nonlinear least squares problem:
$%\begin{equation} \label{NLS}
\min\|XX\zz - A\|\fs,
$%\end{equation}
where $X \in \R^{n \times k}$, $\|\cdot\|\fs$ is the Frobenius norm squared 
and $A$ is assumed to have at least $k$ positive eigenvalues.  
If the eigenpairs of $A$ are required, then an RR projection must 
be performed afterwards.

It is shown in \cite{SLRPGN} that at any full-rank iterate 
$X \in \R^{n \times k}$, the \gn~ method takes the simple closed form
\begin{equation*}
X^+ = X+ \alpha  \left(I - \frac{1}{2}X(X\zz X)^{-1}X\zz \right)\left(AX(X\zz X)^{-1}-X\right),
\end{equation*}
where the parameter $\alpha > 0$ is a step size.  Notably, this method 
requires to solve a small $k\times k$ linear system at each iteration.  
It is also shown in \cite{SLRPGN} that the fixed step $\alpha \equiv 1$ is justifiable 
from either a theoretical or an empirical viewpoint, which leads to a parameter-free 
algorithm given as Algorithm~\ref{alg:GN}, named simply as \gn. 
For more theoretical and numerical results on this \gn~ algorithm, we
refer readers to \cite{SLRPGN}.

\begin{algorithm2e}[ht]
\caption{A GN Algorithm: $X = \gn(A,X)$}
\label{alg:GN}
\SetKwInOut{Input}{input}\SetKwInOut{Output}{output}
\SetKwComment{Comment}{}{}
\BlankLine %\dontprintsemicolon
%Input $A\in\R^{m\times n}$ and $k<\min(m,n)$. \\
Initialize $X\in\R^{n\times k}$ to a rank-$k$ matrix.\\
\While{``the termination criterion'' is not met,}{ 
Compute $Y =  X\left(X^T X\right)^{-1}$ and $Z = AY$. \\
Compute $X = Z - X(Y\zz Z - I)/2$.%\\
%Increment $i$ and continue.
}
Perform an RR step using $X$ if Ritz-pairs are needed. 
\end{algorithm2e}

\section{Augmented Rayleigh-Ritz Projection and Our Algorithm Framework}
\label{sec:MPM}

We first introduce the augmented Rayleigh-Ritz or ARR procedure.  
It is easy to see that the RR map $(Y, \Sigma) = \RR(A,Z)$ is equivalent to 
solving the trace-maximization subproblem \eqref{eq:sub2} with the subspace
$\cS = \cR(Z)$, while requiring $Y\zz AY$ to be a diagonal matrix $\Sigma$. 
For a fixed number $k$, the larger the subspace $\cR(Z)$ is, the 
greater chance there is to extract better Ritz pairs.  
The classic SSI always sets $Z$ to the current iterate $X^{(i)}$, 
while both LOBPCG~\cite{LOBPCG} and LMSVD~\cite{LMSVD} 
augment $X^{(i)}$ by additional blocks (see \eqref{S:lobpcg} and 
\eqref{S:lmsvd}, respectively).  Not surprisingly, such augmentations 
are the main reason why algorithms like LOGPCG and LMSVD
generally achieve faster convergence than that of the classic SSI.

In this work, we define our augmentation based on a block Krylov 
subspace structure.  That is, for some integer $p\ge 0$ we define
\begin{equation} \label{S:ARR} 
\cS = \spa\{X, AX, A^2X, \ldots, A^p X\}.% = \cK_{p}(A,X).
\end{equation}
This choice \eqref{S:ARR} of augmentation is made mainly because 
it enables us to conveniently analyze the acceleration rates induced
by such an augmentation (see the next Section).  It is more than likely 
that some other choices of $\cS$ may be equally effective as well.

The optimal solution of the trace maximization problem \eqref{eq:sub2},
restricted in the subspace $\cS$ in \eqref{S:ARR}, can be computed via 
the RR procedure, i.e., Algorithm~\ref{RR}.   We formalize our augmented 
RR procedure as Algorithm~\ref{alg:ARR}, which will often be referred to 
simply as ARR.  

\begin{algorithm2e}[htb!]
\label{alg:ARR}
%\small
\caption{ARR: $(Y,\Sigma) = \ARR(A,X,p)$}
\SetKwInOut{Input}{input}\SetKwInOut{Output}{output}
\SetKwComment{Comment}{}{}
\BlankLine
Input $X \in \R^{n \times k}$ and $p \ge 0$ so that $(p+1)k < n$. \\
Construct augmentation $\Xp=[X\,\; AX\,\; A^2X\,\; \cdots\,\; A^p X]$.\\ 
Perform an RR step using $(\hat{Y},\hat\Sigma)=\RR(A,\Xp)$.\\
Extract $k$ leading Ritz pairs $(Y,\Sigma)$ from $(\hat{Y},\hat\Sigma)$. 
\end{algorithm2e}

We next introduce an abstract version of our algorithmic framework with
ARR projections.  It will be named \arrabit~(standing for ARR and block iteration).
A set of polynomial functions $\{\rho_d(t)\}$, where $d$ is the polynomial degree,
and an integer $p \ge 0$ are chosen at the beginning of the algorithm.  
At each outer iteration, we perform the two main steps: subspace update (SU) 
step and augmented RR (ARR) step.   There are two sets of stopping criteria:
inner criteria for the SU step, and outer criteria for detecting the convergence
of the whole process.

In principle, the SU step can be fulfilled by any reasonable updating scheme 
and it does not require orthogonalizations. In this paper, we consider the 
classic power iteration as our main updating scheme, i.e., 
for $X = [x_1\; x_2\; \cdots, x_m] \in \R^{n\times m}$, we do
\begin{equation*}
 x_i = \rho(A)x_i 
 \quad\mbox{ ~and~ }\quad 
 x_i = \frac{x_i}{\|x_i\|_2},
 \;\; j = 1, 2, \cdots, m.
\end{equation*}
Since the power iteration is applied individually to all columns of the iterate 
matrix $X$, we call this scheme
{\em multi-power method} or \mpm.  Here we intensionally avoid to use the 
term {\em subspace iteration} because, unlike in the classic SSI, we do not 
perform any orthogonalization during the entire inner iteration process.  

To examine the versatility of the \arrabit~framework, we also use the 
Gauss-Newton (\gn) method, presented in Algorithm~\ref{alg:GN},  as a 
second updating scheme.  Since the \gn~ variant requires solving $k \times k$ 
linear systems, its scalability with respect to $k$ may be somewhat lower 
than that of the \mpm~ variant.  Together, we present our \arrabit~algorithmic 
framework in Algorithm~\ref{alg:Arrabit1}.  The two variants, corresponding to
``inner solvers'' \mpm~ and \gn,  will be named \arrabit-\mpm~ and \arrabit-\gn, 
or simply \mpm~ and \gn.

\begin{algorithm2e}[htb!]
\label{alg:Arrabit1}
%\small
\caption{Algorithm \arrabit~(abstract version)}
\SetKwInOut{Input}{input}\SetKwInOut{Output}{output}
\SetKwComment{Comment}{}{}
\BlankLine
Input $A \in\R^{n\times n}$, $k$, $p$ and $\rho(t)$.
Initialize $X \in \R^{n \times k}$. \\
\While{not ``converged'',}{
\While{``inner criteria'' are not met,}{ 
\If{\mpm~ is the inner solver,}{
$X = \rho(A)X$, then normalize columns individually.}
\If{\gn~ is the inner solver,}{
$X =  \gn(\rho(A),X)$, as is given by Algorithm~\ref{alg:GN}.}
}
ARR projection: $(X,\Sigma) = \ARR(A,X,p)$, as in Algorithm~\ref{alg:ARR}.\\
Possibly adjust $p$, the degree of $\rho(t)$, and perform deflation.
}
\end{algorithm2e}

It is worth mentioning that the ``inner criteria'' in the \arrabit~framework 
can have a significant impact on the efficiency of Algorithm~\ref{alg:Arrabit1}.  
Against the conventional wisdom, we do not attempt to keep $X$ numerically
full rank by periodic orthogonalizations which can be quite costly.  Instead, 
we keep iterating until we detect that $X$ is about to lose, or has just lost, 
numerical rank.  More details on this issue will be given in 
Algorithm \ref{alg:Arrabit2} in Section \ref{sec:alg}.

%%%%%%%%%%%%%%%%%%%%%%%%%%%%%%
\section{Analysis of the Augmented Rayleigh-Ritz Procedure}
\label{sec:ARR}

%%%%%%%%%%%%%%%%%%%%%
\subsection{Notation}

Recall that the eigen-decomposition of $A \in\R^{n\times n}$ is 
$A = Q\Lambda Q^{T}$.
In anticipation of later usage, for integer $h\in[1,n)$ 
we introduce the partition $Q = [Q_h \; Q_{h+}]$ where, as
previously defined, $Q_h=[q_{1}\;\; q_{2}\;\; \cdots q_{h}]$ and
\begin{equation}\label{Qh+}
Q_{h+}=[q_{h+1}\;\; q_{h+2}\; \cdots\; q_{n}].
\end{equation}

Let $X \in \R^{n\times k}$ be an approximate basis for $\cR(Q_k)$, 
the range space of $Q_{k}$ or the eigenspace spanned by the first $k$ 
eigenvectors of $A$.  It is desirable for $X$ to have a large projection
$Q_kQ_k^{T}X=\sum_{i=1}^{k}q_iq_i^{T}X$ onto $\cR(Q_k)$ 
relative to that onto $\cR(Q_{k+})$.
Therefore, a good measure for the relative accuracy of $X$ 
is the following ratio
\begin{equation}\label{def:deltak}
	\delta_{k}(X) \triangleq
\frac{\max_{i>k}\|q_{i}^{T}X\|}{\min_{i\le k}\|q_{i}^{T}X\|},
\end{equation}
where $\|q_{i}^{T}X\| = \|(q_{i}q_{i}^{T})X\|$ measures the size of 
the projection of $X$ onto the span of the $i$-th eigenvector $q_{i}$.
Clearly, the smaller $\delta_{k}(X)$ is, the better is $X$ as an 
approximate basis for $\cR(Q_k)$.

Let $Y \in \R^{n\times k}$ be another approximate basis for the 
eigenspace $\cR(Q_k)$ which is constructed from $X$. To 
compare $Y$ with $X$, we naturally compare $\delta_{k}(Y)$ 
with $\delta_{k}(X)$.   More precisely, we will try to estimate the
ratio ${\delta_{k}(Y)}/{\delta_{k}(X)}$ and show that under 
reasonable conditions, it can be made much less than the unity.

To facilitate presentation, we introduce the following
Vandermonte matrix constructed from the spectrum of $A$:
\begin{equation}\label{def:V}
V = \left(\begin{array}{ccccc}
1 & \lambda_{1} & \lambda_{1}^{2} & \cdots & \lambda_{1}^{p}\\
1 & \lambda_{2} & \lambda_{2}^{2} & \cdots & \lambda_{2}^{p}\\
\vdots & \vdots & \vdots & \vdots & \vdots \\
1 & \lambda_{n} & \lambda_{n}^{2} & \cdots & \lambda_{n}^{p}
\end{array}\right) \in\R^{n\times (p+1)},
\end{equation}
where $\lambda_{1}, \cdots, \lambda_{n}$ are the eigenvalues of $A$.

%%%%%%%%%%%%%%%%%%%%%
\subsection{Technical Results}

Before calling the ARR procedure, we have an iterate matrix 
$X \in\R^{n\times k}$.   From $X$, we construct the augmented 
matrix $[X \;\; AX\;\; \cdots \;\; A^{p}X] \in \R^{n \times (p+1)k}$ 
which we call $\Xp$ for a given $p\ge 0$.
In view of the eigen-decomposition $A=Q\Lambda Q^{T}$,  
we have the expression $\Xp=Q\hat{G}$
%\begin{equation*}\label{eqn:Xp=QGh}
%  \Xp = Q[Q^{T}X \;\; \Lambda Q^{T}X \;\; \cdots \;\; \Lambda^{p}Q^{T}X] 
%  = Q\hat{G},
%\end{equation*}
where 
\begin{equation}\label{def:Gh}
 \hat{G} = [Q^{T}X \;\; \Lambda Q^{T}X \;\; \cdots \;\; \Lambda^{p}Q^{T}X].
\end{equation}
We next normalize the rows of $\hat{G}$.  Let $D$ be the diagonal matrix whose
diagonal consists of the row norms of $\hat{G}$.   From
the structure of $\hat{G}$ in \eqref{def:Gh}, it is easy to see that
\begin{equation}\label{def:Dii}
	D_{ii} = \|e_{i}^{T}\hat{G}\| = \|q_{i}^{T}X\| \|e_{i}^{T}V\|, 
	\;\; i = 1, 2, \cdots, n,
\end{equation}
where $e_{i}$ is the $i$-th column of the $n \times n$ identity matrix and 
$V$ is defined in \eqref{def:V}.
Let $D^{\dagger}$ be the pseudo-inverse of $D$, that is,  $D^{\dagger}$
is a diagonal matrix with
\begin{equation}\label{def:D+ii}
	(D^{\dagger})_{ii} = \left\{\begin{array}{cc} 
	1/D_{ii}, & \mbox{ if } D_{ii} \ne 0, \\ 
	0, & \mbox{ otherwise}.
	\end{array}\right.
\end{equation}
The normalization of the rows of $\hat{G}$ in \eqref{def:Gh} defines
another matrix
\begin{equation}\label{def:G}
	G = D^{\dagger}\hat{G} = 
	[C \;\; \Lambda C \;\; \cdots \;\; \Lambda^{p}C],
\end{equation}
where $C=D^{\dagger}Q^{T}X$ and the nonzero rows of $G$ all have unit norm.
Now we rewrite
\begin{equation}\label{eq:Xp}
  \Xp = QDD^{\dagger}\hat{G}  = QDG.
\end{equation}

Let $m$ be a parameter varying in the following range:
for $p \ge 0$ such that $k+pk < n$,
\begin{equation}
	m \in [k,k+pk].
\end{equation}
We perform the partition 
\begin{equation}\label{eq:Xp2}
	\Xp =  [Q_{m} \; Q_{m+}]
 \left[\begin{array}{cc} D_{1} & 0 \\0 & D_{2}\end{array}\right]
 \left[\begin{array}{c} G_{1} \\ G_{2}\end{array}\right] = [Q_{m} \; Q_{m+}]
 \left[\begin{array}{c} D_{1}G_{1} \\ D_{2}G_{2}\end{array}\right],
\end{equation}
where $D$ and $G$ are partitioned following that of $Q$.  In particular,
$G_{1}$ consists of the first $m$ rows of $G$ and $G_{2}$ the
last $n-m$ rows of $G$.

In the sequel, we will make use of an important assumption on 
$G_{1} \in \R^{m\times (p+1)k}$ which we formally name as the 
$G_{1}$-Assumption:
\begin{equation}\label{G1-Assumption}
\mbox{$G_{1}$-Assumption: the first $m$ rows of 
$G$ (or $\hat{G}$) are linearly independent}.
\end{equation}
The $G_{1}$-Assumption implies that
(i) $D_{1} > 0$, and (ii) the pseudo-inverse $G_{1}^{\dagger}$ exists 
such that $G_{1}G_{1}^{\dagger}=I_{m\times m}$.  Let
\begin{equation}\label{def:Yp}
\Yp = \Xp G_{1}^{\dagger}D_{1}^{-1} =  [Q_{m} \; Q_{m+}]
 \left[\begin{array}{c} I \\ D_{2}G_{2}G_{1}^{\dagger}D^{-1}\end{array}\right].
 \end{equation}
In particular, we are interested in the first $k$ columns of $\Yp$, i.e., 
by Matlab notation,
\begin{equation}\label{def:Y}
	Y = \Yp(: , 1\!:\!k)  \in \R^{n\times k}.
\end{equation}
We summarize what we already have for $Y$ into the following lemma.
\begin{lemma}\label{lem:Y=}
Let $A = Q\Lambda Q^{T}$ be the eigen-decomposition of 
$A = A^{T} \in \R^{n\times n}$.   For integers $k>0$ and $p \ge 0$ 
satisfying $(p+1)k < n$, and $m\in [k,k+pk]$, let $G$, $\Xp$, 
$\Yp$ and $Y$ be defined as in \eqref{def:G}, \eqref{eq:Xp}, 
\eqref{def:Yp} and \eqref{def:Y}, respectively.  Under the
$G_{1}$-Assumption,
\begin{equation}\label{eqn:Y}
Y = Q_{m}E_{k} + Q_{m+}SE_{k},
\end{equation}
where $S=D_{2}G_{2}G_{1}^{\dagger}D_{1}^{-1}$ and
$E_{k} \in\R^{m\times k}$ consists of the first $k$ columns 
of the $m\times m$ identity matrix. 
\end{lemma}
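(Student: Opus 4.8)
The plan is to verify \eqref{eqn:Y} by directly substituting the partitioned expression for $\Xp$ into the definition of $\Yp$ and tracking the block structure. First I would recall from \eqref{eq:Xp2} that
\[
\Xp = [Q_{m}\;\; Q_{m+}]\begin{bmatrix} D_{1}G_{1} \\ D_{2}G_{2}\end{bmatrix},
\]
and that the $G_{1}$-Assumption guarantees $G_{1}\in\R^{m\times(p+1)k}$ has full row rank, so that $D_{1}>0$ is invertible and a right pseudo-inverse $G_{1}^{\dagger}$ exists with $G_{1}G_{1}^{\dagger}=I_{m\times m}$. Then right-multiplying $\Xp$ by $G_{1}^{\dagger}D_{1}^{-1}$ collapses the top block: $D_{1}G_{1}\cdot G_{1}^{\dagger}D_{1}^{-1} = D_{1}(G_{1}G_{1}^{\dagger})D_{1}^{-1}=I$, while the bottom block becomes $D_{2}G_{2}G_{1}^{\dagger}D_{1}^{-1}=S$. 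This reproduces the expression for $\Yp$ already recorded in \eqref{def:Yp} (modulo the harmless typo $D^{-1}$ versus $D_{1}^{-1}$ there), i.e.
\[
\Yp = [Q_{m}\;\; Q_{m+}]\begin{bmatrix} I_{m\times m} \\ S\end{bmatrix}
= Q_{m} + Q_{m+}S.
\]

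Next I would take the first $k$ columns. Since $Y = \Yp(:,1\!:\!k)$ and post-multiplication by $E_{k}$ (the first $k$ columns of $I_{m\times m}$) is exactly the operation of selecting the first $k$ columns, I get $Y = Q_{m}E_{k} + Q_{m+}S E_{k}$, which is \eqref{eqn:Y}. The only points needing a word of care are: (i) that the column-selection on $\Yp$ indeed factors through $E_{k}$ applied to both the $I_{m\times m}$ block and the $S$ block — this is immediate because $[Q_{m}\;Q_{m+}]$ is a fixed left factor and does not interact with the column index; and (ii) dimensional bookkeeping, namely $S\in\R^{(n-m)\times(p+1)k}$ but we only need $SE_{k}$, whose $k$ columns are the first $k$ columns of $S$ — consistent with $E_{k}\in\R^{m\times k}$ acting on the $m$-dimensional augmented coordinate. (Here one uses $m\le k+pk=(p+1)k$, so $E_{k}$ selecting $k$ of the $m$ rows is well-defined and $S E_k$ makes sense with $S$ having $m$ columns after the $D_1^{-1}$ on the right; matching the column counts is the routine sanity check.)

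I do not expect any genuine obstacle here: the lemma is essentially a restatement of the constructions in \eqref{eq:Xp2}--\eqref{def:Y}, and the proof is a one-line verification once the $G_{1}$-Assumption is invoked to justify the existence of $G_{1}^{\dagger}$ and the invertibility of $D_{1}$. The closest thing to a subtle point is making sure the right pseudo-inverse identity $G_{1}G_{1}^{\dagger}=I$ (rather than the left version $G_1^\dagger G_1 = I$, which is false when $(p+1)k>m$) is the one being used; this is exactly what full row rank of $G_{1}$ provides, as already asserted in \eqref{G1-Assumption}. So the write-up amounts to: invoke the assumption, cite \eqref{eq:Xp2}, multiply out $\Xp G_{1}^{\dagger}D_{1}^{-1}$, and restrict to the first $k$ columns.
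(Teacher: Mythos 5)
Your proof is correct and takes exactly the paper's route, which is simply to observe that the formula follows from the definitions \eqref{eq:Xp2}--\eqref{def:Y} once the $G_{1}$-Assumption is invoked to give $D_{1}^{-1}$ and a right inverse $G_{1}^{\dagger}$ with $G_{1}G_{1}^{\dagger}=I_{m\times m}$. The only blemish is the interim statement ``$S\in\R^{(n-m)\times(p+1)k}$'' — since $S=D_{2}G_{2}G_{1}^{\dagger}D_{1}^{-1}$ it is in fact $(n-m)\times m$ — but you self-correct in the following clause, and the argument as a whole is sound.
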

\begin{proof}
The equality directly follows from \eqref{def:Yp} and \eqref{def:Y}.
\end{proof}

Since $Y$ is extracted from the subspace $\cR(\Xp)$ constructed 
from $X$, a central question is how much improvement $Y$ can 
provide over $X$ as an approximate basis for $\cR(Q_k)$.  
We study this question by comparing the accuracy measure 
$\delta_{k}(Y)$ relative to $\delta_{k}(X)$. First, we estimate
$\delta_{k}(Y)$.

%%%%%%%%%%%%%%%
\begin{lemma}\label{lem:esti1}
%%%%%%%%%%%%%%%
Under the conditions of Lemma~\ref{lem:Y=},
\begin{equation}\label{ineq:deltaY}
\delta_{k}(Y) \le
\frac{\max_{i>m}d_{i}}{\min_{i\le k}d_{i}}
\max_{1 \le i \le n-m}\|e_{i}^{T}G_{2}G_{1}^{\dagger}E_{k}\|.
\end{equation}
where $d=\diag(D)$ with $D_{ii}$ defined in \eqref{def:Dii}.
\end{lemma}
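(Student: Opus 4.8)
The plan is to bound the numerator and denominator in the definition \eqref{def:deltak} of $\delta_k(Y)$ separately, using the explicit formula \eqref{eqn:Y} for $Y$ from Lemma~\ref{lem:Y=}. Recall that $Y = Q_m E_k + Q_{m+} S E_k$ with $S = D_2 G_2 G_1^{\dagger} D_1^{-1}$, and that $\{q_1,\dots,q_n\}$ is an orthonormal basis. Because $Q_m$ and $Q_{m+}$ have orthonormal columns spanning complementary subspaces, for each index $i$ the projection $q_i^T Y$ picks out exactly one row of one of the two blocks: for $i \le m$ it is the $i$-th row of $E_k$ (a row of the $m\times m$ identity), and for $i > m$ it is the $(i-m)$-th row of $S E_k$.

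First I would handle the denominator $\min_{i \le k}\|q_i^T Y\|$. For $i \le k \le m$, the row $q_i^T Y = e_i^T E_k$ is the $i$-th row of the $m \times m$ identity restricted to its first $k$ columns, which for $i \le k$ is simply $e_i^T \in \R^{1\times k}$, a unit vector; hence $\|q_i^T Y\| = 1$ for all $i \le k$, so the denominator equals $1$. This is the step that makes the whole estimate clean, and it is the reason the row-normalization of $\hat G$ (producing $D$ and $G$) was set up the way it was. Next, for the numerator $\max_{i > k}\|q_i^T Y\|$, I would split into $k < i \le m$ and $i > m$. For $k < i \le m$, again $q_i^T Y = e_i^T E_k = 0$ since $E_k$ keeps only the first $k$ columns of the identity. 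So the numerator is governed entirely by the indices $i > m$, where $q_i^T Y$ is the $(i-m)$-th row of $S E_k = D_2 G_2 G_1^{\dagger} D_1^{-1} E_k$.

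It remains to bound $\max_{i>m}\|e_{i-m}^T D_2 G_2 G_1^{\dagger} D_1^{-1} E_k\|$ by the claimed product. Here $D_2 = \diag(d_{m+1},\dots,d_n)$ scales the rows of $G_2$, so pulling out $\max_{i>m} d_i$ from the left gives the factor $\max_{i>m} d_i$ times $\max_{1\le i\le n-m}\|e_i^T G_2 G_1^{\dagger} D_1^{-1} E_k\|$. The remaining piece is $G_1^{\dagger} D_1^{-1} E_k$: since $D_1^{-1}$ is diagonal with entries $1/d_i$ for $i \le m$, and since multiplying on the right by $E_k$ selects the first $k$ columns — which are the columns indexed by $i \le k$ — the operator norm bound $\|D_1^{-1} E_k\| \le \max_{i\le k}(1/d_i) = 1/\min_{i\le k} d_i$ produces the remaining factor, after which $\|e_i^T G_2 G_1^{\dagger} E_k\|$ remains. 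Wait — I need to be slightly careful about whether $D_1^{-1}$ commutes past $G_1^{\dagger}$; it does not in general, so the cleanest route is to note that the matrix $G_1^\dagger D_1^{-1} E_k$ has, in its $j$-th column, the quantity $(1/d_j)\, G_1^\dagger e_j$ for $j \le k$, and then factor $1/\min_{i\le k} d_i$ out of a column-wise bound; combined with the row-wise bound on $e_i^T G_2$ this yields exactly the stated inequality. The main obstacle is precisely this bookkeeping: keeping straight which diagonal scalings act on rows versus columns, and verifying that the $E_k$ truncation interacts correctly with $D_1^{-1}$ — none of it is deep, but it is where a sign or index slip would most easily creep in. I would assemble these three factors and invoke $\delta_k(Y) = \text{(numerator)}/1$ to conclude.
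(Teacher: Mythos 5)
Your proof is correct and follows the same route as the paper: use \eqref{eqn:Y} to show $q_i^T Y = e_i^T$ for $i\le k$ (so the denominator is $1$), that $q_i^T Y=0$ for $k<i\le m$, and then for $i>m$ bound $\|d_i\, e_{i-m}^T G_2 G_1^\dagger D_1^{-1} E_k\|$ by pulling out $d_i$ and bounding the entries scaled by $D_1^{-1}$ componentwise. Your instinct to abandon the operator-norm bound $\|D_1^{-1}E_k\|_2$ was right, since that route would leave $\|e_{i-m}^T G_2 G_1^\dagger\|$ (the full row, not truncated by $E_k$) and yield a strictly weaker inequality; the column-wise argument you substituted is exactly what the paper does.
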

\begin{proof}
It follows from \eqref{eqn:Y} that
\begin{eqnarray*}
q_{i}^{T}Y = \left\{\begin{array}{cc}
e_{i}^{T}, & i \in [1,k] \\
{\bf 0}^{T}, & i \in (k,m] \\
e_{i-m}^{T}SE_{k}, & i \in (m, n] 
\end{array}\right.  
\end{eqnarray*}
where $e_{i}\in\R^{k}$, ${\bf 0}\in\R^{k}$ and $e_{i-m}\in\R^{n-m}$.
These formulas imply that in the definition \eqref{def:deltak} 
the denominator term $\min_{i\le k}\|q_{i}^{T}Y\|=1$; thus
\begin{equation}\label{eqn:del(Y)}
\delta_{k}(Y) = \max_{i>k}\|q_{i}^{T}Y\| = \max_{i>m}\|q_{i}^{T}Y\|.
\end{equation}
In view of the formula $S=D_{2}G_{2}G_{1}^{\dagger}D_{1}^{-1}$, 
and the definition of $D$ in \eqref{def:Dii}, we have
\begin{eqnarray*}
q_{i}^{T}Y = d_{i}e_{i-m}^{T}G_{2}G_{1}^{\dagger}D_{1}^{-1}E_{k},
\;\;  i \in (m,n].
\end{eqnarray*}
Therefore, for $i \in (m,n]$, 
$%\begin{eqnarray*}
\|q_{i}^{T}Y\| \le \frac{d_{i}}{\min_{j\le k}d_{j}}
\|e_{i-m}^{T}G_{2}G_{1}^{\dagger}E_{k}\|
$. %\end{eqnarray*}
 It follows that
\begin{eqnarray*}
\max_{i>m}\|q_{i}^{T}Y\| \le \frac{\max_{i>m}d_{i}}{\min_{i\le k}d_{i}}
\max_{1 \le i \le n-m}\|e_{i}^{T}G_{2}G_{1}^{\dagger}E_{k}\|,
\end{eqnarray*}
which, together with \eqref{eqn:del(Y)}, establishes \eqref{ineq:deltaY}.
\end{proof}

%%%%%%%%%%%%%%
\subsection{Main Results}

We first extend the definition \eqref{def:deltak} for $\delta_{k}(X)$ 
into a more general form.   For any matrix $M$ of $n$ rows, we define
\begin{equation}\label{def:rho}
\Gamma_{k,m}(M) \triangleq \frac 
{\max_{i>m}\|e_{i}^{T}M\|}
{\min_{i\le k}\|e_{i}^{T}M\|}.
\end{equation}
By this definition, $\delta_{k}(X)=\Gamma_{k,k}(Q^{T}X)$. 

It is worth observing that 
(i) $\Gamma_{k,m}(M)$ is monotonically non-increasing with respect 
to $m$ for fixed $k$ and $M$; 
(ii) 
$\Gamma_{k,m}(M)$ is small if the first $k$ rows of $M$ are much 
larger in magnitude than the last $n-m$; 
(iii)
if $\{\|e_{i}^{T}M\|\}$ is non-increasing, then $\Gamma_{k,m}(M) \le 1$.

Specifically, since the eigenvalues of $A$ are ordered in a 
descending order, for the matrix $V$ in \eqref{def:V} we have
\begin{equation}\label{eqn:GamV}
\Gamma_{k,m}(V) = \frac{\|e_{m+1}^{T}V\|}{\|e_{k}^{T}V\|}
= \left(\frac{1+\lambda_{m+1}^{2}+\cdots+\lambda_{m+1}^{2p}}
{1+\lambda_{k}^{2}+\cdots+\lambda_{k}^{2p}}\right)^{\frac{1}{2}} \le 1,
\end{equation}
Evidently, the faster the decay is between $\lambda_{k}$ and 
$\lambda_{m+1}$, the smaller is $\Gamma_{k,m}(V)$.

Moreover, when $M=z\in\R^{n}$ is a vector which is in turn the 
element-wise multiplication of two other vectors, say $x \in\R^{n}$ 
and $y \in\R^{n}$ so that $z_{i}=x_{i}y_{i}$ for $i=1,\cdots,n$, 
then it holds that
\begin{equation}\label{ineq:xy}
\Gamma_{k,m}(z) \le \Gamma_{k,m}(x)\,\Gamma_{k,m}(y).
\end{equation}

In our first main result, we refine the estimation of $\delta_{k}(Y)$ 
and compare it to $\delta_{k}(X)$.  

%%%%%%%%%%%%%%%%%
\begin{theorem}\label{thm:ARR1}
Under the conditions of Lemma~\ref{lem:Y=},
\begin{equation}\label{accelerate1}
	\delta_{k}(Y) \le 
	\Gamma_{k,m}(Q^{T}X) \Gamma_{k,m}(V) 
	\left\|G_{1}^{\dagger}E_{k}\right\|_{2}.
\end{equation}
Furthermore,
%%%%%%%%%%%%%%%%%
\begin{equation}\label{accelerate2}
	\frac{\delta_{k}(Y)}{\delta_{k}(X)} \le 
	\frac{\max_{j>m}\|q_{j}^{T}X\|}{\max_{j>k}\|q_{j}^{T}X\|}
	\Gamma_{k,m}(V) \left\|G_{1}^{\dagger}E_{k}\right\|_{2}.
\end{equation}
\end{theorem}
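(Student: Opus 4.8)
The plan is to derive \eqref{accelerate1} by refining the bound in Lemma~\ref{lem:esti1}, and then obtain \eqref{accelerate2} by dividing through by $\delta_k(X)$ and simplifying the resulting ratio of $D$-entries.

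\textbf{Step 1: Rewrite the prefactor in Lemma~\ref{lem:esti1} using the structure of $D$.} Lemma~\ref{lem:esti1} gives $\delta_k(Y) \le \frac{\max_{i>m}d_i}{\min_{i\le k}d_i}\,\max_{1\le i\le n-m}\|e_i^T G_2 G_1^{\dagger} E_k\|$. By \eqref{def:Dii} we have $d_i = \|q_i^T X\|\,\|e_i^T V\|$. The idea is to bound $\max_{i>m} d_i \le \bigl(\max_{i>m}\|q_i^T X\|\bigr)\bigl(\max_{i>m}\|e_i^T V\|\bigr)$ and $\min_{i\le k} d_i \ge \bigl(\min_{i\le k}\|q_i^T X\|\bigr)\bigl(\min_{i\le k}\|e_i^T V\|\bigr)$; these are exactly the sub/supermultiplicativity statements behind \eqref{ineq:xy}. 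Dividing, the prefactor becomes $\Gamma_{k,m}(Q^T X)\,\Gamma_{k,m}(V)$, the second factor being already identified in \eqref{eqn:GamV}. (One should check carefully that the denominator's $d_i$ are positive, which is where the $G_1$-Assumption — hence $D_1>0$ — is used, so the $\Gamma$'s are well-defined.)

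\textbf{Step 2: Bound the $G_2,G_1^\dagger$ term by an operator norm.} For the trailing factor, since the rows of $G_2$ all have norm at most $1$ (the nonzero rows of $G$ have unit norm, by the construction after \eqref{def:G}), $\|e_i^T G_2 G_1^\dagger E_k\| \le \|e_i^T G_2\|\,\|G_1^\dagger E_k\|_2 \le \|G_1^\dagger E_k\|_2$ for every $i$. Taking the max over $i$ and combining with Step~1 yields \eqref{accelerate1}.

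\textbf{Step 3: Divide by $\delta_k(X)$.} Recall $\delta_k(X) = \Gamma_{k,k}(Q^T X) = \frac{\max_{j>k}\|q_j^T X\|}{\min_{j\le k}\|q_j^T X\|}$. Dividing \eqref{accelerate1} by this, the factor $\Gamma_{k,m}(Q^T X) = \frac{\max_{j>m}\|q_j^T X\|}{\min_{j\le k}\|q_j^T X\|}$ shares the same denominator $\min_{j\le k}\|q_j^T X\|$, so it cancels and leaves $\frac{\max_{j>m}\|q_j^T X\|}{\max_{j>k}\|q_j^T X\|}$, which is precisely the first factor in \eqref{accelerate2}; the remaining factors $\Gamma_{k,m}(V)\,\|G_1^\dagger E_k\|_2$ are untouched. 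This gives \eqref{accelerate2}.

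\textbf{Expected main obstacle.} The routine algebra is the cancellation in Step~3; the genuinely delicate point is Step~1 — verifying that the crude bound $d_i = \|q_i^T X\|\|e_i^T V\|$ can be split so that the maxima and minima factor in the right direction. One must be careful that $\max_{i>m}$ and $\min_{i\le k}$ in $\Gamma_{k,m}(\cdot)$ are taken over \emph{index ranges}, not over matching indices, so the inequality $\max_{i>m}(\|q_i^T X\|\|e_i^T V\|) \le (\max_{i>m}\|q_i^T X\|)(\max_{i>m}\|e_i^T V\|)$ is what is needed and is valid since all quantities are nonnegative; similarly for the $\min$. Once this factorization is in hand, the rest is bookkeeping, with the $G_1$-Assumption guaranteeing all denominators are nonzero.
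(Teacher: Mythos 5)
Your proposal is correct and follows essentially the same route as the paper: it bounds the prefactor in Lemma~\ref{lem:esti1} via the multiplicativity inequality \eqref{ineq:xy} applied to $d_i=\|q_i^TX\|\,\|e_i^TV\|$, bounds the trailing factor using the fact that the rows of $G_2$ have norm at most one, and then divides by $\delta_k(X)=\Gamma_{k,k}(Q^TX)$ and cancels the common denominator $\min_{j\le k}\|q_j^TX\|$. The only cosmetic difference is that you phrase \eqref{ineq:xy} as separate sub/supermultiplicativity steps rather than invoking the displayed inequality directly.
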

\begin{proof}
Observe that the ratio in the right-hand side of \eqref{ineq:deltaY} 
is none other than $\Gamma_{k,m}(d)$.  
Applying \eqref{ineq:xy} to $M=d$ where 
$d=\diag(D)$ with $D_{ii}$ defined in \eqref{def:Dii}, 
$x_{i}=\|q_{i}^{T}X\|$ and $y_{i}=\|e_{i}^{T}V\|$, we derive 
$%\begin{equation*}
\Gamma_{k,m}(d) \le \Gamma_{k,m}(Q^{T}X) \Gamma_{k,m}(V).
$%\end{equation*}
We observe that 
$
\|e_{i}^{T}G_{2}G_{1}^{\dagger}E_{k}\| \le
\|G_{1}^{\dagger}E_{k}\|_{2}
$
for all $i \in [1,n-m]$, 
since the row vectors $e_{i}^{T}G_{2}$ are all unit vectors.
Substituting the above two inequalities into \eqref{ineq:deltaY}, 
we arrive at \eqref{accelerate1}.
To derive \eqref{accelerate2}, we simply observe that
\begin{eqnarray*}
\Gamma_{k,m}(Q^{T}X)  
= \frac{\max_{j>m}\|q_{j}^{T}X\|}{\min_{j\le k}\|q_{j}^{T}X\|} 
%&=& \frac{\max_{j>k}\|q_{j}^{T}X\|}{\min_{j\le k}\|q_{j}^{T}X\|}
%\frac{\max_{j>m}\|q_{j}^{T}X\|}{\max_{j>k}\|q_{j}^{T}X\|} \\
= \delta_{k}(X) \frac{\max_{j>m}\|q_{j}^{T}X\|}{\max_{j>k}\|q_{j}^{T}X\|}.
\end{eqnarray*}
Substituting the above into \eqref{accelerate1} and dividing both
sides by $\delta_{k}(X)$, we obtain \eqref{accelerate2}.
\end{proof}

To put the above results into perspective, let us examine the 
right-hand side of \eqref{accelerate2}.   Clearly, the first term, 
the ratio involving $\|q_{j}^{T}X\|$'s, is always less than or equal 
to one since $k \le m$, and it decreases as $m$ increases.  In
particular, when $m = k+1+pk$ with $p > 0$ and a large $k$,
then $m \gg k$ and the ratio can be tiny as long as there is a 
significant decay in $\{\|q_{j}^{T}X\|\}_{j=1}^{n}$ between indices 
$k$ and $m$.
In addition, from \eqref{eqn:GamV}, we know that the second term 
$\Gamma_{k,m}(V)\le 1$ and can be far less than one if there is a 
large decay between $\lambda_{k}$ and $\lambda_{m+1}$.
The third term $\|G_{1}^{\dagger}E_{k}\|_{2}$, however, presents a 
complicating factor.  How this term behaves as $p$ increases requires 
a scrutiny which will be the topic of Section~\ref{sec:G1}.

Similarly, we can examine the right-hand side of \eqref{accelerate1}
in which only the first term is different.  Given a good approximate 
basis $X$ for which the row norms of $Q^{T}X$ have a nontrivial decay, 
we can also have $\Gamma_{k,m}(Q^{T}X) \ll 1$; and the faster the 
decay is, the smaller is the term $\Gamma_{k,m}(Q^{T}X)$. 
Therefore, with the exception of the term $\|G_{1}^{\dagger}E_{k}\|_{2}$, 
all the terms in the right-hand sizes of \eqref{accelerate1} and 
\eqref{accelerate2} are small under reasonable conditions.

Next we consider the case where $X\in\R^{n\times k}$ is the result of 
applying a block power iteration $q$ times to an initial random matrix 
$X_{0} \in \R^{n\times k}$,
\begin{equation}\label{def:Xi}
 X = \rho(A)^{q}X_{0} = Q\rho(\Lambda)^{q}Q^{T}X_{0},
\end{equation}
where $\rho(A)$ is a polynomial or rational matrix function accelerator
(or filter) such that 
\begin{equation}\label{order1}
\min_{1\le j\le k}|\rho(\lambda_{j})| = |\rho(\lambda_{k})| 
\ge |\rho(\lambda_{k+1})| \ge \cdots \ge
|\rho(\lambda_{m+1})| = \max_{m< j\le n}|\rho(\lambda_{j})|.
\end{equation}
\begin{theorem}\label{thm:ARR2}
Let $X$ be defined in (\ref{def:Xi}) from an initial matrix 
$X_{0} \in \R^{n\times k}$.
Assume that the conditions of Lemma~\ref{lem:Y=} hold. Then 
there exists a constant $c_{m}$ such that
%%%%%%%%%%%%%%%%%
\begin{equation}\label{accelerate3}
	\delta_{k}(Y) \le c_{m}
	\left|\frac{\rho(\lambda_{m+1})}{\rho(\lambda_{k})}\right|^{q},
\end{equation}
where
\begin{equation}
c_{m} = 	\Gamma_{k,m}(Q^{T}\!X_{0})\Gamma_{k,m}(V)
	\left\|G_{1}^{\dagger}E_{k}\right\|_{2}.
\end{equation}
Moreover, there exists a constant $c'_{m}$ such that
%%%%%%%%%%%%%%%%%
\begin{equation}\label{accelerate4}
	\frac{\delta_{k}(Y)}{\delta_{k}(X)} \le c'_{m}
	\left|\frac{\rho(\lambda_{m+1})}{\rho(\lambda_{k+1})}\right|^{q},
\end{equation}
where
\begin{equation}
c'_{m} = \frac{\max_{j>m}\|q_{j}^{T}X_{0}\|}{\min_{j>k}\|q_{j}^{T}X_{0}\|}
	\Gamma_{k,m}(V)\left\|G_{1}^{\dagger}E_{k}\right\|_{2}.
\end{equation}
\end{theorem}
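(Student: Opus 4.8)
The plan is to obtain Theorem~\ref{thm:ARR2} as a direct corollary of Theorem~\ref{thm:ARR1}, by specializing the bounds \eqref{accelerate1} and \eqref{accelerate2} to the particular iterate $X = \rho(A)^{q}X_{0} = Q\rho(\Lambda)^{q}Q^{T}X_{0}$. The conditions of Lemma~\ref{lem:Y=} (in particular the $G_{1}$-Assumption) are assumed, so everything used from Theorem~\ref{thm:ARR1} is available for this $X$; the only genuinely new ingredient is the effect of the filter on the relevant row norms.

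First I would record the elementary identity $Q^{T}X = \rho(\Lambda)^{q}Q^{T}X_{0}$, which gives $\|q_{i}^{T}X\| = \|e_{i}^{T}Q^{T}X\| = |\rho(\lambda_{i})|^{q}\,\|q_{i}^{T}X_{0}\|$ for $i=1,\ldots,n$. Hence the vector of row norms of $Q^{T}X$ is the element-wise product of $(|\rho(\lambda_{i})|^{q})_{i}$ and $(\|q_{i}^{T}X_{0}\|)_{i}$. Since $\Gamma_{k,m}(\cdot)$ depends on a matrix only through its row norms, the submultiplicativity \eqref{ineq:xy} yields $\Gamma_{k,m}(Q^{T}X) \le \Gamma_{k,m}\big((|\rho(\lambda_{i})|^{q})_{i}\big)\,\Gamma_{k,m}(Q^{T}X_{0})$. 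The ordering hypothesis \eqref{order1} says precisely that $\min_{i\le k}|\rho(\lambda_{i})| = |\rho(\lambda_{k})|$ and $\max_{i>m}|\rho(\lambda_{i})| = |\rho(\lambda_{m+1})|$, so $\Gamma_{k,m}\big((|\rho(\lambda_{i})|^{q})_{i}\big) = |\rho(\lambda_{m+1})/\rho(\lambda_{k})|^{q}$. Substituting into \eqref{accelerate1} gives \eqref{accelerate3} with the stated $c_{m}$.

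For the ratio bound \eqref{accelerate4} I would instead start from \eqref{accelerate2} and estimate its leading factor $\max_{j>m}\|q_{j}^{T}X\| \big/ \max_{j>k}\|q_{j}^{T}X\|$. Using $\|q_{j}^{T}X\| = |\rho(\lambda_{j})|^{q}\|q_{j}^{T}X_{0}\|$, the numerator is at most $|\rho(\lambda_{m+1})|^{q}\max_{j>m}\|q_{j}^{T}X_{0}\|$ by \eqref{order1}. For the denominator I would keep only the $j=k+1$ term, which is $|\rho(\lambda_{k+1})|^{q}\|q_{k+1}^{T}X_{0}\| \ge |\rho(\lambda_{k+1})|^{q}\min_{j>k}\|q_{j}^{T}X_{0}\|$; the monotone chain in \eqref{order1} also gives $|\rho(\lambda_{k+1})| = \max_{j>k}|\rho(\lambda_{j})|$, which is exactly why isolating the index $k+1$ is the natural move. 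Dividing the two estimates produces the factor $|\rho(\lambda_{m+1})/\rho(\lambda_{k+1})|^{q}$ times $\max_{j>m}\|q_{j}^{T}X_{0}\|\big/\min_{j>k}\|q_{j}^{T}X_{0}\|$; plugging into \eqref{accelerate2} yields \eqref{accelerate4} with the stated $c'_{m}$.

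There is no serious obstacle here: the proof is essentially bookkeeping layered on top of Theorem~\ref{thm:ARR1}. The one point deserving care is that the quantities called ``constants'' $c_{m}$ and $c'_{m}$ are not literally independent of $q$ — the factor $\|G_{1}^{\dagger}E_{k}\|_{2}$, and indeed the matrix $G_{1}$ itself, is assembled from $\Xp$ and hence from $X=\rho(A)^{q}X_{0}$, so it varies with $q$. The statement finesses this by \emph{defining} $c_{m}$ and $c'_{m}$ to be those $q$-dependent expressions; what makes the resulting bounds genuinely useful is the separate fact, taken up in Section~\ref{sec:G1}, that $\|G_{1}^{\dagger}E_{k}\|_{2}$ remains controlled as $p$ (and $q$) grow. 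I would flag this dependence explicitly so the reader is not misled into reading $c_{m}$ as a fixed scalar.
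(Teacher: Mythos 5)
Your proof is correct and follows essentially the same route as the paper: apply the identity $\|q_i^T X\| = |\rho(\lambda_i)|^q\|q_i^T X_0\|$ together with the submultiplicativity \eqref{ineq:xy} to bound $\Gamma_{k,m}(Q^T X)$, then substitute into \eqref{accelerate1} and \eqref{accelerate2}; the paper condenses the ratio estimate into a single displayed inequality, whereas you spell out the isolation of the $j=k+1$ term, but the mechanism is identical. Your closing observation that $c_m$ and $c'_m$ are not $q$-independent (since $G_1$ is assembled from $\Xp$ and hence from $X=\rho(A)^q X_0$) is accurate and worth noting, though it pertains to the interpretation of the theorem rather than to the validity of the argument.
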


\begin{proof}
It follows from $Q^{T}X = \rho(\Lambda)^{q}Q^{T}X_{0}$ that
\begin{equation}\label{eqn:xqx0}
\|q_{i}^{T}X\| = |\rho(\lambda_{i})|^{q}\|q_{i}^{T}X_{0}\|, \;\; 
i = 1, \cdots, n.
\end{equation}
Applying \eqref{ineq:xy} to \eqref{eqn:xqx0}, we obtain
$ %\begin{equation*}
\Gamma_{k,m}(Q^{T}X) \le \Gamma_{k,m}
(\rho(\Lambda)^{q}) \Gamma_{k,m}(Q^{T}X_{0})
$ %\end{equation*}
which establishes \eqref{accelerate3}, 
upon substituting into \eqref{accelerate1}.

To prove \eqref{accelerate4}, we first use \eqref{eqn:xqx0} to calculate
\begin{eqnarray*}
\frac{\max_{j>m}\|q_{j}^{T}X\|}{\max_{j>k}\|q_{j}^{T}X\|} 
= \frac{\max_{j>m} |\rho(\lambda_{j})|^{q}\|q_{j}^{T}X_{0}\|}
{\max_{j>k} |\rho(\lambda_{j})|^{q}\|q_{j}^{T}X_{0}\|}     \le
\left|\frac{\rho(\lambda_{m+1})}{\rho(\lambda_{k+1})}\right|^{q}
\frac{\max_{j>m}\|q_{j}^{T}X_{0}\|}{\min_{j>k}\|q_{j}^{T}X_{0}\|}.
\end{eqnarray*}
Then substituting the above into \eqref{accelerate2} yields
\eqref{accelerate4}.
\end{proof}

Let us also state a couple of special cases of \eqref{accelerate3}.
\begin{corollary}
If the $G_{1}$-Assumption holds for $m = k+pk$, 
then there exist constants $C_{p}$ and $C'_{p}$ such that
\begin{equation*}\label{accelerate5}
	\delta_{k}(Y) \le C_{p}
	\left|\frac{\rho(\lambda_{k+1+pk})}{\rho(\lambda_{k})}\right|^{q}
\qquad\mbox{ and }\qquad
	\frac{\delta_{k}(Y)}{\delta_{k}(X)} \le C'_{p}
	\left|\frac{\rho(\lambda_{k+1+pk})}{\rho(\lambda_{k+1})}\right|^{q}.
\end{equation*}
In particular, when there is no augmentation ($p=0$) and no acceleration
($\rho(t)=t$), the convergence rate reduces to
$ %\begin{equation*}
\delta_{k}(Y) \le C_0 \left|{\lambda_{k+1}}/{\lambda_{k}}\right|^{q}.
$ %\end{equation*}
\end{corollary}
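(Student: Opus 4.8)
The plan is to read off the corollary as a direct specialization of Theorem~\ref{thm:ARR2}, taking the augmentation parameter $m$ as large as the $G_{1}$-Assumption allows. Concretely, Theorem~\ref{thm:ARR2} permits any $m \in [k,\,k+pk]$ at which the $G_{1}$-Assumption holds, and the hypothesis of the corollary is precisely that this holds at the endpoint $m = k+pk$. With that choice, $m+1 = k+1+pk$, so $\lambda_{m+1}$ in \eqref{accelerate3} and \eqref{accelerate4} becomes $\lambda_{k+1+pk}$, and the constants of Theorem~\ref{thm:ARR2} become functions of $p$ alone (given $X_{0}$, $V$, and $G_{1}$). Thus I would set
\[
C_{p} = \Gamma_{k,m}(Q^{T}X_{0})\,\Gamma_{k,m}(V)\,\|G_{1}^{\dagger}E_{k}\|_{2}, \qquad
C'_{p} = \frac{\max_{j>m}\|q_{j}^{T}X_{0}\|}{\min_{j>k}\|q_{j}^{T}X_{0}\|}\,\Gamma_{k,m}(V)\,\|G_{1}^{\dagger}E_{k}\|_{2},
\]
with $m = k+pk$; then \eqref{accelerate3} and \eqref{accelerate4} are verbatim the two displayed bounds of the corollary. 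The index $m = k+pk < n$ is legitimate because $(p+1)k < n$.

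For the final assertion I would specialize further to $p = 0$ and $\rho(t) = t$. Then $m = k$, hence $m+1 = k+1$ and $|\rho(\lambda_{m+1})/\rho(\lambda_{k})|^{q} = |\lambda_{k+1}/\lambda_{k}|^{q}$. Moreover, with $p = 0$ the Vandermonde matrix $V$ in \eqref{def:V} is the all-ones column, so $\Gamma_{k,k}(V) = 1$ (consistent with \eqref{eqn:GamV}); also $E_{k}$ is the $k\times k$ identity and, under the $G_{1}$-Assumption, $G_{1} \in \R^{k\times k}$ is nonsingular, so $G_{1}^{\dagger}E_{k} = G_{1}^{-1}$. Substituting into the first bound gives $\delta_{k}(Y) \le C_{0}\,|\lambda_{k+1}/\lambda_{k}|^{q}$ with $C_{0} = \Gamma_{k,k}(Q^{T}X_{0})\,\|G_{1}^{-1}\|_{2}$.

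I do not expect any real obstacle: the entire content is already carried by Theorem~\ref{thm:ARR2}, and the proof amounts to bookkeeping. The only two points deserving a sentence of care are (i) confirming that $m = k+pk$ lies in the admissible range, which the hypothesis supplies outright, and (ii) in the $p = 0$, $\rho(t) = t$ case, noting that $V$ degenerates so that $\Gamma_{k,k}(V) = 1$ and that $G_{1}$ being square turns the pseudo-inverse into a genuine inverse. If one insisted on constants provably independent of the iteration count $q$, a uniform control of $\|G_{1}^{\dagger}E_{k}\|_{2}$ along the power iteration would still be needed, but that concern is inherited unchanged from Theorem~\ref{thm:ARR2} and is taken up separately in Section~\ref{sec:G1}.
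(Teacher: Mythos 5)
Your proposal is correct and is exactly the argument the paper intends: the corollary is stated without proof precisely because it is the endpoint specialization $m = k + pk$ of Theorem~\ref{thm:ARR2} (so $m+1 = k+1+pk$), with $C_p = c_m$ and $C'_p = c'_m$, and the final assertion is the further specialization $p=0$, $\rho(t)=t$. Your observations that $\Gamma_{k,k}(V)=1$ and $G_1^\dagger E_k = G_1^{-1}$ when $p=0$, and that any $q$-dependence of $\|G_1^\dagger E_k\|_2$ is already present in Theorem~\ref{thm:ARR2} itself (and in fact washes out up to a diagonal sign factor when $X = \rho(A)^q X_0$), are accurate refinements but do not change the route.
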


Finally, we remark that all of our results point out that there exists a matrix 
$Y \in \R^{n\times k}$ in the augmented subspace $\cR(\Xp)$ (which is 
constructed from the matrix $X$) that is a better approximate basis for 
$\cR(Q_{k})$ than $X$ is, under reasonable conditions.  It is known that
the Ritz pairs produced by the RR procedure are optimal approximations 
to the eigenpairs of $A$ from the input subspace (see \cite{parlett} for 
example).   Therefore, the derived bounds in this section should be 
attainable by the Ritz pairs generated by the ARR procedure.

%%%%%%%%%%%%%%%%%%
\subsection{Validity of $G_{1}$-Assumption}
\label{sec:G1}

A key condition for our results is the $G_{1}$-Assumption, given
in \eqref{G1-Assumption}, that requires the first $m$ rows of $G$
in \eqref{def:G} to be linearly independent.   Under this assumption, 
the larger $m$ is, the better the convergence rate could be. 
  
Let us examine the matrix $G_{1}$ consisting of the first $m$ rows of 
$G$ in \eqref{def:G}.  To simplify notation, we use $H$ for $G_{1}$, 
redefine $C$ as the first $m$ row of $C$ in \eqref{def:G}, and consider 
the matrix 
\begin{equation}\label{def:H}
  H = [C \;\; \Lambda_{m}C \;\; \cdots \;\; \Lambda_{m}^{p}C] 
  \in \R^{m \times (p+1)k},
\end{equation}
where $\Lambda_{m}$ is the $m \times m$ leading block of 
$\Lambda$ whose disgonal is assumed to be positive. 

We first give a necessary condition for the $m$ rows 
of $H$ to be linearly independent. 
\begin{proposition}\label{necessity}
Let $m \in (k,k+pk]$ for $p>0$.  The matrix $H\in\R^{m \times (p+1)k}$ 
defined in \eqref{def:H} has full rank $m$ only if $\Lambda_{m}$ has 
no more than $k$ equal diagonal elements (i.e., $\Lambda_{m}$ contains 
no eigenvalue of multiplicity greater than $k$).
\end{proposition}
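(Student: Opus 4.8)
The plan is to prove the contrapositive: if $\Lambda_m$ has some eigenvalue of multiplicity greater than $k$, then $H$ cannot have full row rank $m$. Suppose a real number $\mu$ occurs as a diagonal entry of $\Lambda_m$ in more than $k$ positions; let $I = \{i_1 < i_2 < \cdots < i_r\} \subseteq \{1,\dots,m\}$ be the set of these positions, so $r > k$. The key observation is that the block columns of $H$ in \eqref{def:H} are $C, \Lambda_m C, \dots, \Lambda_m^p C$, so every column of $H$ lies in $\mathrm{span}\{C, \Lambda_m C, \dots, \Lambda_m^p C\}$ as a set of column vectors; equivalently, $\mathcal{R}(H) \subseteq \mathrm{span}\{\Lambda_m^j c : 0 \le j \le p,\ c \text{ a column of } C\}$. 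On the index subset $I$, the operator $\Lambda_m$ acts as the scalar $\mu$: for any column $c$ of $C$ and any $j$, the restriction $(\Lambda_m^j c)|_I = \mu^j\, c|_I$. Hence, restricting all columns of $H$ to the rows indexed by $I$, every such restricted column is a scalar multiple of one of the $k$ vectors $c|_I \in \mathbb{R}^r$ (the restrictions of the $k$ columns of $C$). Therefore the row-space of $H$ restricted to coordinates in $I$ — which is $\mathrm{span}\{(\text{rows of }H)|_I\}$ — has dimension at most $k < r$.

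First I would make this dimension count precise: consider the $m \times m$ submatrix-selection and argue via ranks. Let $E_I \in \mathbb{R}^{m \times r}$ be the columns of the $m\times m$ identity indexed by $I$, so $E_I^T H \in \mathbb{R}^{r \times (p+1)k}$ is the submatrix of $H$ formed by the rows in $I$. From the block structure, the $\ell$-th block of $E_I^T H$ is $E_I^T \Lambda_m^\ell C = \mu^\ell (E_I^T C)$, because $\Lambda_m E_I = \mu E_I$ (the columns of $E_I$ are eigenvectors of $\Lambda_m$ with eigenvalue $\mu$). Thus $E_I^T H = (E_I^T C)\,[\,I_k\ \ \mu I_k\ \ \cdots\ \ \mu^p I_k\,]$ up to the obvious block arrangement, and in particular $\mathrm{rank}(E_I^T H) \le \mathrm{rank}(E_I^T C) \le k$. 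But if $H$ had full row rank $m$, then the selected rows $E_I^T H$ would be $r$ linearly independent rows, forcing $\mathrm{rank}(E_I^T H) = r > k$, a contradiction. Hence $H$ does not have full rank $m$, which is exactly the contrapositive of the claim.

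The only mild subtlety is bookkeeping the block structure: one must be careful that "the $\ell$-th block of $H$ is $\Lambda_m^\ell C$" and that left-multiplication by $E_I^T$ commutes block-by-block with these powers, which it does since each block is treated independently and $E_I^T \Lambda_m^\ell = \mu^\ell E_I^T$. There is no genuine obstacle here — the result is essentially the statement that a matrix built from powers of a diagonal $\Lambda_m$ acting on a width-$k$ block $C$ can "see" an eigenspace of $\Lambda_m$ only through a $k$-dimensional window, so eigenspaces of dimension exceeding $k$ obstruct full row rank. I would present it in four short lines: (1) reduce to contrapositive; (2) pick the repeated-eigenvalue index set $I$ with $|I| = r > k$; (3) compute $E_I^T H = (E_I^T C)[I_k\ \mu I_k\ \cdots\ \mu^p I_k]$ and conclude $\mathrm{rank}(E_I^T H) \le k$; (4) note this contradicts the $r$ selected rows being independent, so $\mathrm{rank}(H) < m$.
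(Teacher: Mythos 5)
Your proof is correct and is essentially the paper's argument: the paper permutes so that the repeated eigenvalue occupies the first $k+1$ diagonal slots, observes that the corresponding $(k+1)\times(p+1)k$ submatrix $[C'\; \alpha C' \;\cdots\; \alpha^p C']$ has rank at most $k$, and concludes those rows are dependent. You do the same thing without the WLOG permutation, phrasing it as the factorization $E_I^T H = (E_I^T C)[I_k\;\; \mu I_k\;\;\cdots\;\;\mu^p I_k]$ with $\mathrm{rank}\le k < r$.
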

\begin{proof}
Without loss of generality, suppose that the first $k+1$ diagonal elements 
of $\Lambda_{m}$ are all equal, i.e., 
$\lambda_{1}=\lambda_{2}=\cdots=\lambda_{k+1}=\alpha$.
Then the first $k+1$ rows of $H$, say $H'$, is of the form
$H' = [C' \;\; \alpha C' \;\; \cdots \;\; \alpha^{p}C']$,
where $C'$ consists of the first $k+1$ rows of $C$.
Since all column blocks are scalar multiples of $C'$
which has $k$ columns, the rank of $H$ is at most $k$.  
independent of $m$.
\end{proof}

The fact that $H$ is built from $C$ which has only $k$ columns
dictates that to have $\rank(H)$ greater than $k$, it is necessary
that the maximum multiplicity of $\Lambda_{m}$ must not exceed $k$.   

On the other hand, the next result says that when $p=1$ and $m$ 
reaches its upper bound $2k$, a multiplicity equal to $k$ is sufficient 
for $H$ to attain the full rank $2k$ (i.e., to be nonsingular) in a generic case.

First, let us do the partitioning
\begin{equation}\label{part:CLH}
C = \left[\begin{array}{c} C_{1} \\ C_{2}\end{array}\right],
\;\;\;
\Lambda_{m} = \left[\begin{array}{cc} 
\Lambda_{1} &  \\ & \Lambda_{2}
\end{array}\right],
\;\;\;
H = \left[\begin{array}{cc} 
C_{1} & \Lambda_{1}C_{1} \\ C_{2} & \Lambda_{2}C_{2}
\end{array}\right].
\end{equation}
where $m=2k$, and $C_{j}, \Lambda_{j}$, $j=1,2$, are all $k\times k$ submatrices.
Recall that $\Lambda_{1}$ consists of the first $k$ eigenvalues of 
$A$ and $\Lambda_{2}$ the next $k$ eigenvalues.

\begin{proposition}\label{prop:p=1}
Let $p=1$, $m=2k$, and $C$, $\Lambda_{m}$ and $H$ be defined as in 
\eqref{part:CLH}.   Let $r$ be the maximum multiplicity of $\Lambda_{m}$. 
Assume that any $k\times k$ submatrix of $C$ is nonsingular.  Then $H$ 
is nonsingular for $r=k$.
\end{proposition}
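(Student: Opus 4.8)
The plan is to exploit the $2\times 2$ block structure of $H$ in \eqref{part:CLH} and reduce the nonsingularity of $H$ to a statement about a single $k\times k$ matrix built from the eigenvalue blocks $\Lambda_1,\Lambda_2$ and the sub-blocks of $C$. First I would note that since $r=k$, the block $\Lambda_1$ (the first $k$ eigenvalues) and the block $\Lambda_2$ (the next $k$ eigenvalues) cannot share a common value: if some $\lambda_i=\lambda_j$ with $i\le k<j\le 2k$ then that value would have multiplicity at least $k+1>k=r$, a contradiction. Hence $\Lambda_1$ and $\Lambda_2$ have disjoint spectra, so the Sylvester-type map $Z\mapsto \Lambda_1 Z-Z\Lambda_2$ is invertible; equivalently, $\Lambda_1\otimes I-I\otimes\Lambda_2$ is nonsingular.

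Next I would perform block elimination on $H=\begin{pmatrix}C_1 & \Lambda_1 C_1\\ C_2 & \Lambda_2 C_2\end{pmatrix}$. Since $C_1$ is a $k\times k$ submatrix of $C$, by hypothesis it is nonsingular, so $[C_1\ \ \Lambda_1C_1]$ has full row rank and right-multiplying $H$ by the nonsingular matrix $\begin{pmatrix}C_1^{-1} & 0\\ 0 & C_1^{-1}\end{pmatrix}$ gives $\begin{pmatrix}I & \Lambda_1\\ C_2C_1^{-1} & \Lambda_2 C_2 C_1^{-1}\end{pmatrix}$. Writing $W=C_2C_1^{-1}$ (nonsingular, again by the hypothesis on submatrices of $C$), a row elimination using the top block $(I\ \ \Lambda_1)$ sends the $(2,2)$ block to $\Lambda_2 W-W\Lambda_1$, so $\det H$ equals $\det C_1^2$ times $\det(\Lambda_2 W-W\Lambda_1)$ up to sign. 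Thus $H$ is nonsingular if and only if $\Lambda_2 W-W\Lambda_1$ is nonsingular.

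Finally I would argue that $\Lambda_2 W-W\Lambda_1$ is nonsingular. The entrywise formula $(\Lambda_2 W-W\Lambda_1)_{ij}=(\lambda_{k+i}-\lambda_{j})W_{ij}$ shows this matrix is the Hadamard product of $W$ with the matrix $(\lambda_{k+i}-\lambda_j)_{ij}$, all of whose entries are nonzero by the disjoint-spectrum observation above. Here I should be careful: a Hadamard product of a nonsingular matrix with an entrywise-nonzero matrix need not be nonsingular in general, so the cleaner route is to invoke directly that $Z\mapsto\Lambda_2 Z-Z\Lambda_1$ is a linear bijection on $\R^{k\times k}$ (disjoint spectra), and that $W$ is nonsingular; one then checks that the composition $W\mapsto \Lambda_2 W - W\Lambda_1$, \emph{restricted to nonsingular $W$}, still yields a nonsingular matrix --- which is exactly the point where the assumption ``any $k\times k$ submatrix of $C$ is nonsingular'' must be used more forcefully than just for $C_1$ and $C_2 C_1^{-1}$. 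I expect this last step to be the main obstacle: one needs that $\Lambda_2 W-W\Lambda_1$ is nonsingular for the \emph{specific} $W$ arising from $C$, and the natural way to get this is a genericity/determinant argument, showing $\det(\Lambda_2 W-W\Lambda_1)$ is a nonzero polynomial in the entries of $C$ that does not vanish under the stated nondegeneracy of $C$'s submatrices; alternatively one exhibits it via a perturbation or continuity argument from a diagonal choice of $W$. Once that is in hand, combining it with $\det H=\pm\det(C_1)^2\det(\Lambda_2 W-W\Lambda_1)\neq0$ completes the proof.
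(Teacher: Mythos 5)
Your block-elimination reduction matches the paper's: both reduce the nonsingularity of $H$ to that of $\Lambda_2 W - W\Lambda_1$ (equivalently $K=\Lambda_1 - F^{-1}\Lambda_2 F$ with $F=W=C_2C_1^{-1}$). But two things then go wrong.

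First, the claimed disjointness of $\spa(\Lambda_1)$ and $\spa(\Lambda_2)$ does not follow from $r=k$. Your argument says that $\lambda_i=\lambda_j$ with $i\le k<j$ forces multiplicity $\ge k+1$; it only forces multiplicity $\ge j-i+1\ge 2$. Concretely, for $k=3$ take eigenvalues $10,9,9,9,8,7$: the maximum multiplicity is $r=3=k$, yet $\Lambda_1=\diag(10,9,9)$ and $\Lambda_2=\diag(9,8,7)$ share the value $9$. So the Sylvester map $Z\mapsto\Lambda_2 Z-Z\Lambda_1$ is \emph{not} a bijection in this situation, and the premise of the rest of your argument fails.

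Second, as you yourself flag, even if the spectra were disjoint, invertibility of the Sylvester operator does not imply that $\Lambda_2 W-W\Lambda_1$ is nonsingular for the particular nonsingular $W$ at hand; a linear bijection can certainly send invertible matrices to singular ones. You propose a genericity or perturbation argument but do not carry it out, so the proof is incomplete at precisely this point. The paper circumvents both difficulties by a step you are missing: since the maximum multiplicity is \emph{exactly} $k$ and there are $2k$ eigenvalues in total, one can permute the index set $\{1,\dots,2k\}$ (before forming the partition \eqref{part:CLH}; the hypothesis on the $k\times k$ submatrices of $C$ is permutation-invariant) so that all $k$ copies of the multiplicity-$k$ eigenvalue land in a single block. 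After that permutation one of $\Lambda_1,\Lambda_2$ is a scalar matrix $\alpha I$, and then $\Lambda_2W - W\Lambda_1$ collapses to either $(\Lambda_2-\alpha I)W$ or $W(\alpha I-\Lambda_1)$, each a product of two nonsingular matrices (the scalar $\alpha$ cannot reappear in the other block, since that would push the multiplicity past $k$). That is the idea that actually closes the proof, and it makes the disjoint-spectra Sylvester machinery and the genericity appeal unnecessary.
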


\begin{proof}
We will show that when $\lambda_{1}$ or $\lambda_{k+1}$ has multiplicity
$k$, then $H$ is nonsingular.  All the other cases can be similarly proven 
with appropriate permutations before partitioning \eqref{part:CLH} is done.

First, the nonsingularity of $H$ is equivalent to that of
\begin{equation*}\label{def:H'}
\left[\begin{array}{cc} 
C_{1} & \Lambda_{1}C_{1} \\ C_{2} & \Lambda_{2}C_{2}
\end{array}\right]
\left[\begin{array}{cc} C_{1}^{-1} &  \\ & C_{1}^{-1} \end{array}\right]
= \left[\begin{array}{cc} 
I & \Lambda_{1} \\ C_{2}C^{-1} & \Lambda_{2}C_{2}C^{-1}
\end{array}\right]
= \left[\begin{array}{cc} 
I & \Lambda_{1} \\ F & \Lambda_{2}F
\end{array}\right],
\end{equation*}
where $F\triangleq C_{2}C_{1}^{-1}$ is nonsingular by our assumption.
Eliminating the (2,1) block, we obtain
\[
\left[\begin{array}{cc} 
I & \Lambda_{1} \\ F & \Lambda_{2}F
\end{array}\right]  
~~\longrightarrow~~
\left[\begin{array}{cc} 
I & \Lambda_{1} \\ 0 & \Lambda_{2}F-F\Lambda_{1}
\end{array}\right] 
\]
Hence, the nonsingularity of $H$ is equivalent to 
that of $F\Lambda_{1}-\Lambda_{2}F$, or in turn
equivalent to that of the following matrix:
\begin{equation}\label{def:K}
K = \Lambda_{1}-F^{-1}\Lambda_{2}F.
\end{equation}
If the multiplicity of $\lambda_{1}$ is $k$ (implying that 
$\Lambda_{1}=\lambda_{k}I$), \eqref{def:K} reduces to
$K = F^{-1}(\lambda_{k}I-\Lambda_{2})F$.
On the other hand, if the multiplicity of $\lambda_{k+1}$ is $k$
(implying that $\Lambda_{2}=\lambda_{k+1}I$), then 
$K = \Lambda_{1}-\lambda_{k+1}I$.
In either case, $K$ is nonsingular since $\lambda_{k+1}<\lambda_{k}$; 
hence, so is $H$.
(Also in either case, $K$ becomes singular for multiplicity $r > k$ 
which implies $\lambda_{k+1}=\lambda_{k}$.)
\end{proof}

In Proposition~\ref{prop:p=1}, we assume that every $k \times k$ 
submatrix of $C$ is nonsingular.   It is well-known that for a generic
random matrix $C$, this assumption holds with high probability.  
Therefore, in a generic setting Proposition~\ref{prop:p=1} holds 
with high probability.  

Now the unproven case is for maximum multiplicity $r<k$.  Let us
rewrite $K$ in \eqref{def:K} into a sum of two matrices,
\begin{equation}
K = (\Lambda_{1}-\lambda_{k}I) + F^{-1}(\lambda_{k}I-\Lambda_{2})F.
\end{equation}
The first is diagonal and positive 
semidefinite, and the second has positive eigenvalues when 
$\lambda_{k}>\lambda_{k+1}$, but is generally asymmetric. So far,
we have not been able to find a result that guarantees nonsingularity
for such a matrix $K$.  However, in a generic setting where $K$ 
comes from random matrices, nonsingularity should be expected 
with high probability (which has been empirically confirmed by 
our numerical experiments).   
%Nevertheless, this topic remains to be theoretically resolved.

It should be noted that $G_{1}$ being nonsingular with 
$m=k+kp$ represents the best scenario where the acceleration 
potential of $p$-block augmentation is fully realized.  However, 
$m<k+kp$ does not represent a failure, considering the fact that 
as long as $m>k$, an acceleration is still realized to some extent.

Once it is established for $p=1$ and $m=2k$ that in a generic setting 
$H$ is nonsingular whenever the maximum multiplicity of $\Lambda_{m}$ 
is less than or equal to $k$, the same result can in principle be extended 
to the case of $p=3$ by considering
\[
H = \left[C\;\; \Lambda C\;\; \Lambda^{2}C\;\; \Lambda^{3}C\right] = 
\left[[C\;\; \Lambda C]\;\; \Lambda^{2}[C\;\; \Lambda C]\right]
= [\hat{C} \;\; \hat{\Lambda}\hat{C}],
\]
where $\hat{C}=[C\;\; \Lambda C]$ and $\hat{\Lambda}=\Lambda^{2}$,
which has the same form as for the case $p=1$.   It will also cover 
the case of $p=2$ where the matrix involved is a submatrix of the 
one for $p=3$.

It is worth noting that $m=(p+1)k$ could be kept constant if $k$ is 
decreased while $p$ is increased.  Is it sensible to use fewer 
vectors in power iterations but to compensate it with an 
augmentation of more blocks?  Although in some cases this
strategy works well, in general it seems to be a risky approach 
for two reasons.  First, the smaller $k$ is, the more likely 
it is to encounter matrices that have eigenvalues of multiplicity 
greater than $k$.  In this case, by Proposition~\ref{necessity}, the
benefit of augmentation could become limited.  Secondly, we 
have observed in numerical experiments that the condition number 
of $G_{1}$ tends to increase as $p$ increases, which would in turn 
increase the constants $c_{m}$ and $c'_{m}$ in 
\eqref{accelerate1}-\eqref{accelerate2}.  
These facts suggest that using a small $k$ and a large $p$ to compute
more than $k$ eigenpairs could be numerically problematic.  In our 
implementation, we choose to be conservative by using the default 
value of $p=1$, while setting $k$ to be slightly bigger than the 
number of eigenpairs to be computed.

%\section{Practical Issues}
\section{Polynomial Accelerators} \label{sec:poly}
To construct polynomial accelerators (or filters) $\rho(t)$, we use 
Chebyshev interpolants on highly smooth functions.  
Chebyshev interpolants are polynomial interpolants on
Chebyshev points of the second kind, defined by
\begin{equation}
t_{j}=-\cos(j\pi/N),  0 \le j \le N,
\end{equation}
where $N \ge 1$ is an integer.  Obviously, this set of $N+1$ points are 
in the interval $[-1,1]$ inclusive of the two end-points. 
Through any given data values $f_{j}, j=0,1,\cdots,N$, at these 
$N+1$ Chebyshev points, the resulting unique polynomial interpolant 
of degree $N$ or less is a Chebyshev interpolant.  It is known
that Chebyshev interpolants are ``near-best'' \cite{Ehlich-Zeller1966}.

Our choices of functions to be interpolated are
\begin{equation}\label{def:fN}
f_{d}(t) = (f_{1}(t))^{d} 
\quad\mbox{ where }\quad
f_{1}(t) = \max(0,t)^{10},
\end{equation}
and $d$ is a positive integer.   
Obviously, $f_{d}(t) \equiv 0$ for $t \le 0$ and $f_{d}(1) \equiv 1$.  
The power 10 is rather arbitrary and exchangeable with other 
numbers of similar magnitude without making notable differences.

The functions in \eqref{def:fN} are many times differentiable so that their 
Chebyshev interpolants converge relatively fast, %(at the algebraic rate $1/N^{k}$) 
see \cite{Mastroianni-Szabados1995}.   Interpolating such smooth functions
on Chebyshev points helps reducing the effect of the Gibbs phenomenon and
allows us to use relatively low-degree polynomials. 

There is a well-developed open-source Matlab package called {\tt Chebfun}
\cite{Driscoll2014} for doing Chebyshev interpolations, among many other 
functionalities\footnote
{Also see the website \url{http://www.chebfun.org/docs/guide/guide04.html}}.
In this work, we have used {\tt Chebfun} to construct Chebyshev interpolants 
as our polynomial accelerators. Specifically, we interpolate the function
$f_{d}(t)$ by the $d$-th degree Chebyshev interpolant polynomial, say,
\be \label{eq:chebfun-f} 
\psi_d(t) = \gamma_1 t^d + \gamma_2 t^{d-1} + \ldots + \gamma_d t +
\gamma_{d+1}.
\ee

Suppose that we want to dampen the eigenvalues in an interval $[a, b]$, 
where $a\le \lambda_n$ and $b< \lambda_k$, while magnifying eigenvalues
to the right of $[a, b]$.   Then we map the interval $[a, b]$ onto $[-1, 1]$ 
by an affine transformation and then apply $\psi_d(\cdot)$ to $A$. That is, 
we apply the following polynomial function to $A$, 
\be \label{eq:rhod}
\rho_d(t)= \psi_d\left( \frac{2t - a - b}{b-a}\right).
\ee 
Let $\Gamma_d = (\gamma_{1},\gamma_{2},\cdots,\gamma_{d+1})$ 
denote the coefficients of the polynomial $\psi_d(t)$ in \eqref{eq:chebfun-f}.
The corresponding matrix operation $Y=  \rho_d(A)X$ can be 
implemented by Algorithm~\ref{alg:poly} below.
\begin{algorithm2e}[htb!]
\label{alg:poly}
%\small
\caption{Polynomial function: $Y = \poly(A,X,a,b,\Gamma_d)$}
\SetKwInOut{Input}{input}\SetKwInOut{Output}{output}
\SetKwComment{Comment}{}{}
\BlankLine
\DontPrintSemicolon
Compute $c_0=\frac{a+b}{a-b}$ and $c_1=\frac{2}{b-a}$. Set $Y=\gamma_1X$.\\
\lFor{$j=1,2,\ldots,d$}{
$Y=c_0 Y + c_1 A Y+\gamma_{j+1}X$.
}
\end{algorithm2e}

%In Figure~\ref{fig:poly}, the function $f_{1}(t)$ in \eqref{def:fN} is 
%plotted on the left, and $\psi_d(t)$ are plotted on the right for $d=2$ to $7$.
%Among these polynomials, the higher the degree is, the closer 
%the curve is to the vertical line $t=1$.
%Before a monotone ascent towards 1, all these polynomials 
%keep a low profile with a maximum magnitude less 
%than or around $0.2$.

For a quick comparison, we plot our Chebyshev interpolates of
degrees 2 to 7 and the Chebyshev polynomials of degrees 2 to 7
side by side in Figure~\ref{fig:poly}.  For both kinds of polynomials, 
the higher the degree is, the closer the curve is to the vertical line $t=1$.
We observe that inside the interval $[-1,1]$, our Chebyshev interpolates
have lower profiles (with magnitude less than or around 0.2 except near 1) 
than the Chebyshev polynomials which oscillate between $\pm 1$, 
while outside $[-1,1]$ the Chebyshev polynomials grow faster.  

\begin{figure}[htb]
\centering
  \hfill
    \subfigure[Chebyshev interpolants of degrees 2 to 7]{
    \includegraphics[width=0.45\textwidth,height=0.33\textwidth]
    {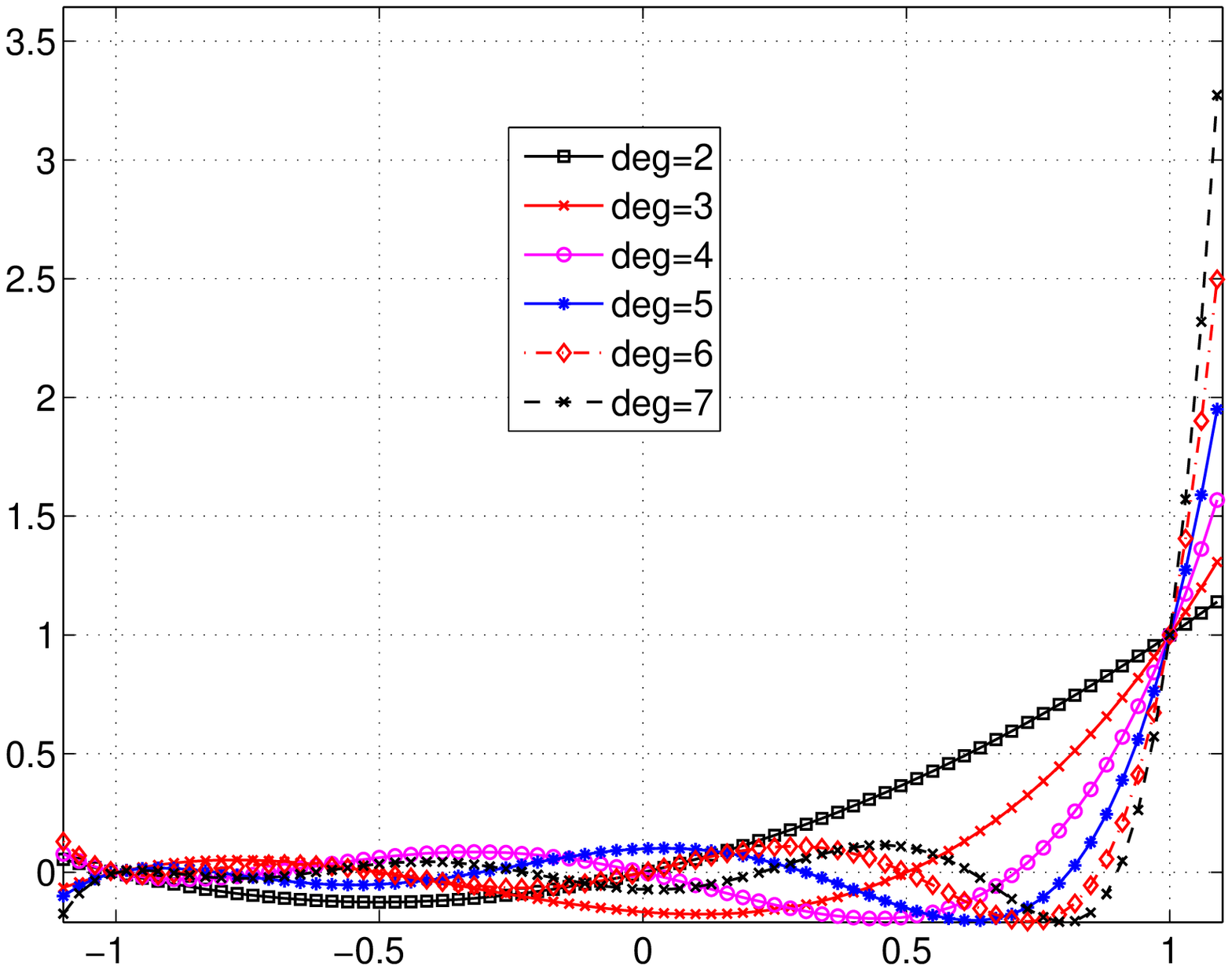}
  }
  \hfill
    \subfigure[Chebyshev polynomials of degrees 2 to 7]{
    \includegraphics[width=0.45\textwidth,height=0.33\textwidth]
    {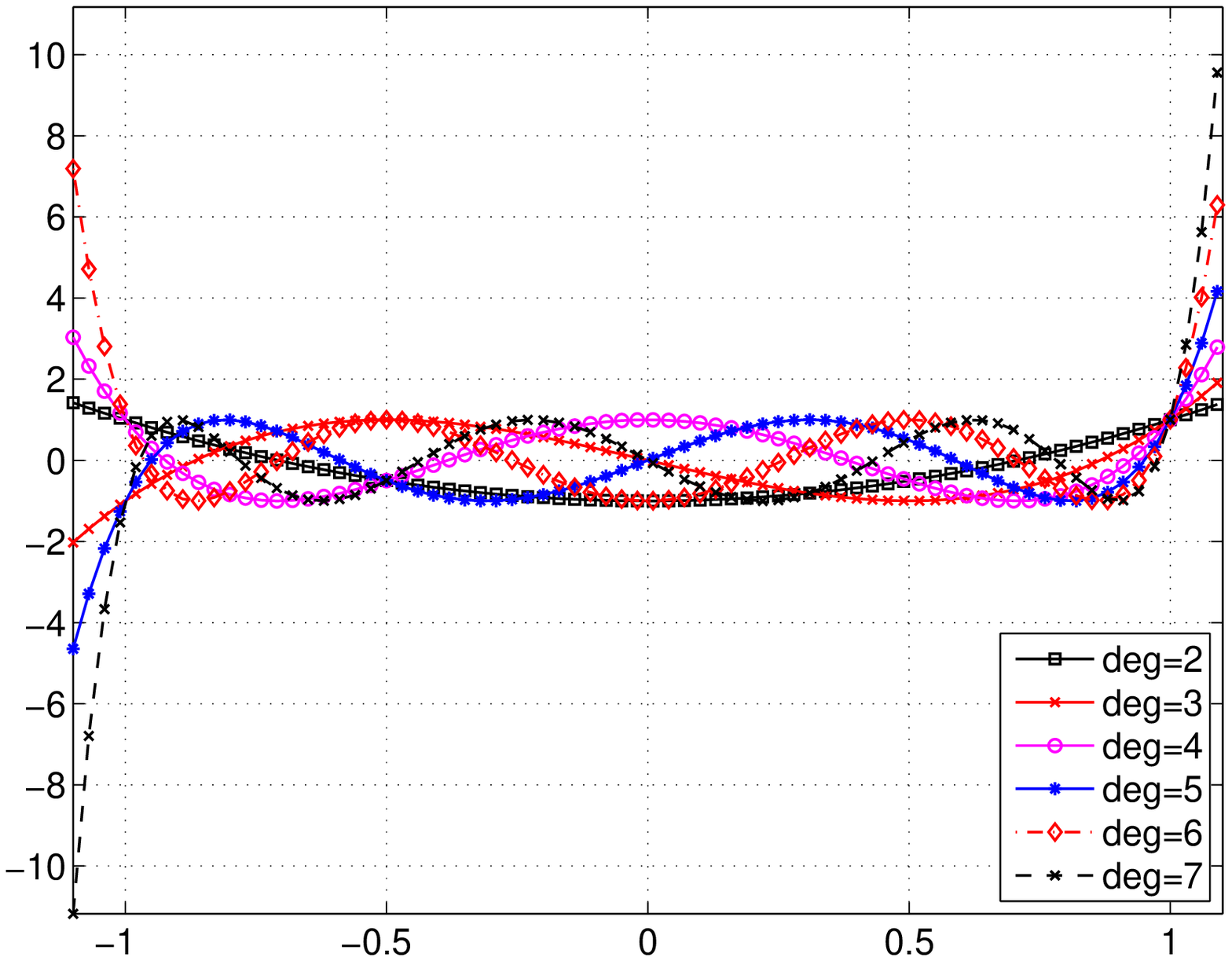}
  }
\caption{illustration of polynomial functions}
\label{fig:poly}
\end{figure}

The idea of polynomial acceleration is straightforward and old, but its 
success is far from foolproof, largely due to inevitable errors in estimating 
intervals within which eigenvalues are supposed to be suppressed or 
promoted.  The main reason for us to prefer our Chebyshev interpolates 
over the classic Chebyshev polynomials is that their lower profiles tend 
to make them less sensitive to erroneous intervals, hence easier to control.  
Indeed, our numerical comparison, albeit limited, appears to justify our choice.

%%%%%%%%%%%%%%%%%%%%%%%%%%%
\section{Details of \arrabit~ Algorithms} \label{sec:alg}

In this section, we describe technical details and give parameter choices 
for our \arrabit~algorithm which computes $k$ eigenpairs corresponding to
$k$ algebraically largest eigenvalues of a given symmetric matrix $A$.   

\textbf{Guard vectors}. 
When computing $k$ eigenpairs, it is a common practice to compute a few
extra eigenpairs to help guard against possible slow convergence.  For
this purpose, a small number of ``guard vectors" are added to the iterate
matrix $X$.  In general, the more guard vectors are used, the less iterations 
are needed for convergence, but at a higher cost per iteration on memory 
and computing time.  In our implementation, we set the number of columns
in iterate matrix $X$ to $k+q$, where by default $q$ is set to $0.1k$ 
(rounded to the nearest integer). 

\textbf{Estimation of $\lambda_n$ and $\lambda_{k+q}$}.
To apply polynomial accelerators, we need to estimate the interval 
$[a,b]=[\lambda_n,\lambda_{k+q}]$ which contains unwanted eigenvalues.   
The smallest eigenvalue $\lambda_n$ is computed by calling the the 
Matlab built-in solver \eigs~ (i.e., \arpack~\cite{Sorensenetall1997}).  
Given an initial matrix $X \in \R^{n\times (k+q)}$ whose columns are
orthogonalized, an under-estimation of $\lambda_{k+q}$ can be taken
as the smallest eigenvalue of the projected matrix $X^TAX$ 
(which requires an RR projection).  
As the iterations progress, more accurate estimates of $\lambda_{k+q}$ 
will becomes available after each later ARR projection.

\textbf{Outer loop stop rule}.  
Let $(x_{i},\mu_{i})$, $i=1,2,\cdots,k$, be computed Ritz pairs where 
$x_{i}^{T}x_{j}=\delta_{ij}$.  
We terminate the algorithm when the following maximum relative residual norm 
becomes smaller than a prescribed tolerance $\tol$, i.e.,
\begin{equation}\label{eq:out-stop}
\maxres:=\max_{i=1,\ldots,k} \{\res_i\} \le \tol, 
%\quad \res_i := \frac{\|Ax_i - \mu_ix_i\|_2}{\max(1, |\mu_i|)}, \; i=1,\cdots,k, 
\end{equation}
where
\begin{equation}\label{eq:resi}
\res_i := \frac{\|Ax_i - \mu_ix_i\|_2}{\max(1, |\mu_i|)}, 
\; i=1,\cdots,k.
\end{equation}
The algorithm is also stopped in the following three cases:
(i) if a maximum number of iterations, denoted by ``maxit'', is reached
(by default maxit = 30); or
(ii) if the maximum relative residual norm has not been reduced after 
three consecutive outer iterations; or
(iii) if most Ritz pairs have residuals considerably smaller than 
$\tol$ and the remaining have residuals slightly larger than $\tol$; 
specifically, $\maxres <(1 + 9 h/ k) \tol$  $(< 10*\tol)$, where $h$ 
is the number of Ritz pairs with residuals less than $0.1*\tol$.  
In our experiments we also monitor the computed 
partial trace $\sum_{i=1}^{k}\mu_{i}$ at the end for all solvers 
as a check for correctness.

\textbf{Continuation}. 
When a high accuracy (say, $\tol\le 10^{-8}$) is requested, we use a continuation 
procedure to compute Ritz-pairs satisfying a sequence of tolerances: 
$\tol_1 > \tol_2 > \cdots \ge \tol$, and use the computed Ritz-pairs for 
$\tol_t$ as the starting point to compute the next solution for $\tol_{t+1}$. 
In our implementation, we use the update scheme 
\begin{equation}\label{eq:contin}
\tol_{t+1} = \max(10^{-2}\,\tol_t, \,\tol), 
%\;\;\color{blue}\tol_1 = 10^{-6}.
\end{equation}
where $\tol_1$ is chosen to be considerably larger than $\tol$.
A main reason for doing such a continuation is that our deflation
procedure (see below) is tolerance-dependent.   At the early stages 
of the algorithm, a stringent tolerance would delay the activation of deflation
and likely cause missed opportunities in reducing computational costs.

\textbf{Inner loop parameters and stop rule}.   
Both \mpm~ and \gn~are tested as inner solvers to update $X$. These inner 
solvers are applied to the shifted matrix $A-a I$ which is supposedly positive 
semidefinite since $a$ is a good approximation to $\lambda_{n}$ (computed
by \eigs~in our implementation).  We check inner stopping criteria every 
$\mathrm{maxit}_2$ iterations and check them at most $\mathrm{maxit}_1$ 
times.  In the present version, the default values for these two parameters 
are $\mathrm{maxit}_1=10$ and $\mathrm{maxit}_2=5$ 
Therefore, the maximum number of inner iterations allowed is 
$\mathrm{maxit}_1 \times \mathrm{maxit}_2 = 50$. 

The inner loop stopping criteria are either
\begin{equation} \label{eq:inn-stop}
  \mathtt{rc}=\mathtt{rcond}(X^{T}X) \le \tol_{t} 
  \mbox{\;\;  or \; \; }
  \mathtt{rc}/\mathtt{rcp}>0.99,
\end{equation}
where
$\tol_t$ is the current tolerance (in a continuation sequence) 
and rcp is the previously computed $\mathrm{rcond}(X)$.   
In \eqref{eq:inn-stop}, we use the {\tt rcond} subroutine in LAPACK 
(also used by Matlab) to estimate the reciprocal 1-norm condition 
number of $X^{T}X$, which we find to be relatively inexpensive. 
The first condition in \eqref{eq:inn-stop} indicates that $X$ is about 
to lose (or have just lost) rank numerically, which implies that we 
achieve the goal of eliminating the unwanted eigenspace numerically.  
However, it is probable that a part of the desired eigenspace is also 
sacrificed, especially when there are clusters among the desired 
eigenvalues.  Fortunately, this problem can be corrected, at a cost, 
in later iterations after deflation.  
On the other hand, the second condition is used to deal with the
situation where the conditioning of $X$ does not deteriorate, which 
occurs from time to time in later iterations when there exists little 
or practically no decay in the relevant eigenvalues.

\textbf{Deflation}. 
Since Ritz pairs normally have uneven convergence rates, a procedure
of detecting and setting aside Ritz pairs that have ``converged'' is called
deflation or locking, which is regularly used in eigensolvers because it
not only reduces the problem size but also facilitates the convergence
of the remaining pairs.   In our algorithm, a Ritz pair $(x_i, \mu_i)$ is 
considered to have ``converged'' with respect to a tolerance $\tol_t$ 
if its residual (see~\eqref{eq:resi} for definition) satisfies
\begin{equation}\label{eq:defl}
\res_{i} \le \max(10^{-14},\tol_t^2).
\end{equation}
After each ARR projection, we collect the converged Ritz vectors into 
a matrix $Q_c$, and start the next iteration from those Ritz vectors 
``not yet converged'', which we continue to call $X$.  
Obviously, whenever $Q_c$ is nonempty
$X$ is orthogonal to $Q_c$.  Each time we check the stopping rule 
in the inner loop, we also perform a projection $X=X -Q_c(Q_c^T X)$ 
to ensure that $X$ stays orthogonal to $Q_c$.   
In addition, the next ARR projection will also be performed
in the orthogonal complement of $\cR(Q_c)$.  That is, we apply an ARR 
projection to the matrix $Y-Q_c(Q_c^TY)$ for $Y = [X\; AZ\; \cdots\; A^{p}X]$. 
At the end, we always collect and keep $k+q$ leading Ritz pairs from both 
the ``converged'' and the ``not yet converged'' sets.

\textbf{Augmentation blocks}.
The default value for the number of augmentation blocks is $p=1$, but
this value may be adjusted after each ARR projection.  We increase $p$ 
by one when we find that the relevant Ritz values show a small decay 
and at the same time the latest decrease in residuals is not particularly 
impressive.  Specifically, we set $p=p+1$ if
\begin{equation}\label{eq:block}
\frac{\mu_{k+q}}{\mu_k} > 0.95 
\quad \mbox{ and } \quad
\frac{\maxres }{\mathtt{maxresp}} > 0.1,
\end{equation}
where $\mathtt{maxresp}$ is the maximum relative residual norm at the 
previous iteration.  The values $0.95$ and $0.1$ are set after some limited
experimentation and by no means optimal.  For $k$ relatively large, since 
the memory demand grows significantly as $p$ increases, we also limit the 
maximum value of $p$ to $p_{\max}=3$.

\textbf{Polynomial degree}.
Under normal conditions, the higher degree is used in a polynomial accelerator,
the fewer number of iterations will be required for convergence, but at a higher
cost per iteration.  A good balance is needed.
Let $d$ and $d_{\max}$ be the initial and the largest polynomial degrees,
respectively.    We use the default values $d=3$ and $d_{\max}=15$.
Let  $\rho_d(t)$ be the polynomial function defined in \eqref{eq:rhod}.
After each ARR step, we adjust the degree based on estimated spectral 
information of $\rho_d(A)$ computable using the current Ritz values.   
We know that the convergence rate of the inner solvers would be satisfactory
if the eigenvalue ratio ${\rho_d(\lambda_{k+q})}/{\rho_d(\lambda_{k})}$ 
is small.  Based on this consideration, we calculate
\begin{equation}\label{eq:hat-d}
	\hat{d} = \min_{d \ge 3}\left\{d \in \mathbb{Z}: 
	\frac{\rho_{d}(\mu_{k+q}^*)}{\rho_{d}(\mu_{k}^*)} < 0.9\right\}, 
\end{equation}
and then apply the cap $d_{\max}$ by setting
\begin{equation}\label{eq:degree}
	d = \min(\hat{d}, d_{\max})
\end{equation}
where $\mu_{k}^*$ and $\mu_{k+q}^*$ are a pair of Ritz values 
corresponding to the iteration with the smallest residual ``$\maxres$'' 
defined in \eqref{eq:out-stop}  (therefore the most accurate so far). 
The value of $0.9$ is of course adjustable.

Finally, a pseudocode for our \arrabit~ algorithm with all the above features
is presented as Algorithm \ref{alg:Arrabit2}.  This is the version used to 
produce the numerical results of this paper.  As one can see, 
\arrabit~ algorithm uses $A$ only in matrix multiplications.

\begin{algorithm2e}[htb!]
\label{alg:Arrabit2}
%\small
\caption{Algorithm \arrabit~(detailed version)}
\SetKwInOut{Input}{input}\SetKwInOut{Output}{output}
\SetKwComment{Comment}{}{}
\BlankLine
\DontPrintSemicolon
Input $A \in \R^{n\times n}$, integer $k \in (0,n)$ and tolerance $\tol>0$.\\
Choose $d$ and $d_{\max}$, the initial and maximum polynomial degrees. 
\tcc*[f]{initialize}\\
Choose $p$ and $p_{\max}$, the initial and maximum number of augmentation blocks.\\
Choose $q\ge0$, the number of guard vectors, so that $(p+1)(k+q) < n$.\\
Set tolerance parameters: $t=1$, $\tol_t\ge \tol$ and
$\tol_d=\max(10^{-14},\tol_t^2)$.\\
Initialize converged Ritz pairs $(Q_c,\Sigma_c) = \emptyset$ 
for deflation purposes.\\
Initialize an {\it i.i.d.}~Gaussian random matrix $X \in \R^{n \times (k+q)}$.\\
Estimate the interval $[\lambda_n,\lambda_{k+q}] \approx [a,b]$.\\ 
%such that $a\le \lambda_n$ and $b < \lambda_{k+q}$.\\
\For(\tcc*[f]{outer loop}){$j=1,\ldots,\mathrm{maxit}$}{
Initialize rc to infinity.\\% and inc = 1. \\
\For(\tcc*[f]{inner loop}){$i_{1} = 1, 2, \cdots, \mathrm{maxit}_{1}$,}{
\For(\tcc*[f]{call inner solvers}){$i_{2} = 1, 2, \cdots, \mathrm{maxit}_{2}$,}{
\If(\tcc*[f]{MPM}){\mpm~ is the inner solver,}{
Call $X = \poly(A-aI,X,0,b-a,\Gamma_d)$. \tcc*[f]{accelerator}\\
Normalize the columns of $X$ individually. 
}
\If(\tcc*[f]{GN}){\gn~ is the inner solver,}{
Compute $Y =  X\left(X^T X\right)^{-1}$. \\
Call $Z = \poly(A-aI,Y,0,b-a,\Gamma_d)$.\tcc*[f]{accelerator}\\
Compute $X = Z - X(Y\zz Z - I)/2$.
}
Compute $X = X - Q_c (Q_c^T X)$ if $Q_c\ne\emptyset$.
\tcc*[f]{projection}
}
Set $\mathrm{rcp} = \mathrm{rc}$ and compute 
$\mathrm{rc} = \mathrm{rcond}(X^TX)$.\\
\lIf(\tcc*[f]{end inner loop})
{the inner stop rule \eqref{eq:inn-stop} is met,}{break.} 
}
Compute $Y=[X, AX, \ldots, A^p X]$. \tcc*[f]{augmentation}\\
$Y=Y-Q_c(Q_c^T Y)$ if $Q_c\ne\emptyset$.\tcc*[f]{projection}\\
Perform ARR step: $(X,\Sigma)=\RR(A,Y)$.\tcc*[f]{ARR}\\
Extract $k+q$ leading Ritz pairs $(x_{i},\mu_{i})$ from 
$(Q_c, \Sigma_c)$ and $(X,\Sigma)$.\\ 
Overwrite $(X,\Sigma)$ by the $k+q$ Ritz pairs.
Compute residuals by \eqref{eq:resi}.\\
\If{the outer stop rule \eqref{eq:out-stop} is met for $\tol$,}
   {output the Ritz pairs $(X,\Sigma)$ and exit. 
   \tcc*[f]{output and exit}\\
 }
\If(\tcc*[f]{continuation})
   {the outer stop rule \eqref{eq:out-stop} is met for $\tol_t$}{ 
   Set $\tol_{t+1} = \max\left(10^{-2}\tol_t, \tol\right)$,
   $b = \mu_{k+q}$ and $t=t+1$.
} 
Collect converged Ritz pairs in $(Q_c,\Sigma_c)$ that satisfy \eqref{eq:defl}.
\tcc*[f]{deflation}\\
Overwrite $(X, \Sigma)$ by the remaining not yet converged Ritz pairs. \\
\lIf(\tcc*[f]{update p})
  {rules in \eqref{eq:block} are met,}
  {set $p = \min(p+1, p_{\max})$.}
Update the polynomial degree by rules \eqref{eq:hat-d}-\eqref{eq:degree}.
\tcc*[f]{update degree}
}
\end{algorithm2e}

\section{Numerical Results} \label{sec:num}

In this section, we evaluate the performance of \arrabit~ on a set 
of sixteen sparse matrixes.   Although we have constructed the algorithm 
with parallel scalability in mind as a major motivating factor, a study of 
scalability issues in a massively parallel environment is beyond the 
scope of the current paper.   

As a first step, we test the algorithm in Matlab environment, on a single 
computing node (2 processors) and without explicit code parallelization, 
to determine how it performs in comparison to established solvers.   
We have implemented our \arrabit~algorithm, as is described by the 
pseudocode Algorithm~\ref{alg:Arrabit2}, in MATLAB.   For brevity, 
the two variants, corresponding to the two choices of inner solvers, 
will be called \mpm~ and \gn, respectively.
  
We test two levels of accuracy in our experiments: 
$\tol=10^{-6}$ or $\tol=10^{-12}$.  By our stoping rule, upon successful 
termination the largest eigenpair residual will not exceed $10^{-5}$ or 
$10^{-11}$, respectively.  Since our algorithm checks the termination 
rule only after each ARR call, it often returns solutions of higher 
accuracies than what is prescribed by the $\tol$ value.

%%%%%%
\subsection{Solvers, Platform and Test Matrices}

Since it is impractical to carry out numerical experiments with a large 
number of solvers, we have carefully chosen two high-quality packages
to compare with our \arrabit~code.  One package is \arpack\footnote
{See \url{http://www.caam.rice.edu/software/ARPACK/}}
\cite{Sorensenetall1997}, which is behind the Matlab built-in iterative 
eigensolver \eigs, and will naturally serve as the benchmark solver.  
Another is a more recent package called \feast~\cite{PingEric2014}
which has been integrated into Intel's Math Kernel Library (MKL) 
under the name ``Intel MKL Extended Eigensolver"\footnote
{See \url{http://software.intel.com/en-us/intel-mkl} 
(version 11.0.2 on our workstation)}. 
Both \arpack~ and \feast~ are written in Fortran.  While 
\arpack~ can be directly accessed through \eigs~ in Matlab, 
we call \feast~ from Intel's MKL Library via Matlab's \texttt{MEX} 
external interfaces.  
In our experiments, all parameters in \eigs~ and \feast~ are set to their default 
values, and each solver terminates with its own stopping rules using either 
$\tol=10^{-6}$ or $\tol=10^{-12}$.

We have also examined a few other solvers as potential candidates but decided 
not to use them in this paper, including but not limited to the filtered Lanczos
algorithm\footnote{See \url{http://www-users.cs.umn.edu/~saad/software/filtlan}} \cite{FangSaad2012} and the Chebyshev-Davidson algorithm\footnote
{See \url{http://faculty.smu.edu/yzhou/code.htm}} \cite{chebydav}. 
Our initial tests indicated that, for various reasons, these solvers'
overall performance could not measure up with that of the commercial-grade 
software packages \arpack~ and \feast~on a number of test problems.
This fact may be more of a reflection on the current status of software 
development for these solvers than on the merits of the algorithms behind.

It is important to note that \feast~ is designed to compute all eigenvalues 
(and their eigenvectors) in an interval, which is given as an input along with 
an estimated number of eigenvalues inside the interval.   When computing 
$k$ largest eigenpairs, we have observed that the performance of \feast~ 
is affected greatly by the quality of the two estimations: the interval itself
and the number of eigenvalues inside the interval.  When calling \feast, 
we set (i) the interval to be $[\lambda_k^*, \lambda_1^*]$ where 
$\lambda_k^*$ and $\lambda_1^*$ are computed eigenvalues by 
\eigs~ using the same tolerance $\tol$; and (ii) the estimated number 
of eigenvalues in the interval to $1.2k$ rounded to the nearest integer.  
We consider this setting to be fair, if not overly favorable, to \feast. 

Our numerical experiments are preformed on a single computing 
node of Edison\footnote{See 
\url{http://www.nersc.gov/users/computational-systems/edison/}}, 
a Cray XC30 supercomputer maintained at the National Energy Research
Scientific Computer Center (NERSC) in Berkeley.  The node consists of
two twelve-core Intel ``Ivy Bridge'' processors at 2.4 GHz  with a total of
64 GB shared memory.   Each core has its own L1 and L2 caches
of 64 KB %(32 KB instruction, 32 KB data) 
and 256 KB, respectively; A 30-MB L3 cache shared between 12 cores on
the ``Ivy Bridge'' processor.  We generate Matlab standalone executable programs 
and submit them as batch jobs to Edison.  The reported runtimes are wall-clock times.

On a multi/many-core computer, memory access patterns and 
communication overheads have a notable impact on computing time.  
In Matlab, dense linear algebra operations are generally well optimized by 
using BLAS and LAPACK tuned to the CPU processors in use.  On the other 
hand, we have observed that some sparse linear algebra operations in Matlab 
seem to have not been as highly optimized (at least in version 2013b).  
In particular, when doing multiplications between a large sparse matrix and a 
dense matrix (like $AX$), Matlab is often slower than a routine in Intel's Math 
Kernel Library (MKL) named ``{\tt mkl\_dcscmm}'' when it is invoked through 
Matlab's MEX external interfaces in our experiments.   For this reason, 
we use this MKL routine in our Matlab code to perform the operation $AX$.

Our test matrices are selected from the University of Florida Sparse Matrix 
Collection\footnote{See \url{http://www.cise.ufl.edu/research/sparse/matrices}}.
For each matrix, we compute both $k$ eigenpairs corresponding to $k$ 
largest eigenvalues and those corresponding to $k$ smallest eigenvalues.  
Many of the selected matrices are produced by PARSEC \cite{parsec}, 
a real space density functional theory (DFT) based code for electronic 
structure calculation in which the Hamiltonian is discretized by a finite 
difference method.   We do not take into account any background 
information for these matrices; instead, we simply treat them 
algebraically as matrices.

Table~\ref{tab:matrix} lists, for each matrix $A$, the dimensionality $n$, 
the number of nonzeros $nnz(A)$ and the density of $A$, i.e., the ratio 
$({nnz(A)}/{n^2})100\%$.   The number of eigenpairs to be computed 
is set either to 1\% of $n$ rounded to the nearest integer or to $k=1000$
whichever is smaller.
Table~\ref{tab:matrix} also reports the number of the nonzeros in
the Cholesky factor $L$ of matrix $A - \alpha I$ where 
$\alpha = \max(2\lambda_n(A),0)$.
The factorization is carried out after an ``approximate minimum degree'' 
permutation performed by the Matlab function ``\texttt{amd}'', as is 
done by the following MATLAB line:
$
\mathtt{t = amd(B); ~L = chol(B(t,t), 'lower')}.
$ 
We have also tested the ``symmetric 
approximate minimum degree'' permutation (``{\tt symamd}'' in Matlab), 
but the corresponding density of $L$ is slightly larger on most matrices. 
The density of factor $L$ and the computing time in seconds used by 
Cholesky factorization are also given in Table~\ref{tab:matrix}. 
Although all matrices $A$ are very sparse, the Cholesky factors of 
some matrices, such as Ga10As10H30, Ga3As3H12 and Ge87H76, are
quite dense.   As a result, the Cholesky factorization time varies greatly 
from matrix to matrix.
We mention that the spectral distributions of the test matrices can 
behave quite differently from matrix to matrix.  Even for the same matrix,
the spectrum of a matrix can change behavior drastically from region to region.
Most notably, computing $k$ smallest eigenpairs of many matrices in this set
turns out to be more difficult than computing $k$ largest ones.

The largest matrix size in this set is more than a quarter of million.  Relative
to the computing resources in use, we consider these selected matrices to 
be fairly large scale.
Overall, we consider this test set reasonably diverse and representative, 
fully aware that there always exist instances out there that are more 
challenging to one solver or another.

\begin{table}[htbp!]
  \setlength{\tabcolsep}{3pt}
  \centering
  \caption{Information of Test Matrices} 
  \label{tab:matrix}
{\footnotesize
  \begin{tabular}{|c|c|c|c|c|c|c|c|}
 \hline%\hline
 matrix name  & $n$ & $k$ & $nnz(A)$ & density of A & nnz(L) & density of L & time \\ \hline\hline

    Andrews & 60000 & 600 & 760154 & 0.021\% & 117039940 & 6.502\% &    7.18 \\ \hline
                C60 & 17576 & 176 & 407204 & 0.132\% & 34144169 & 22.105\% &    1.62 \\ \hline
               cfd1 & 70656 & 707 & 1825580 & 0.037\% & 35877440 & 1.437\% &    1.81 \\ \hline
            finance & 74752 & 748 & 596992 & 0.011\% & 2837714 & 0.102\% &    0.28 \\ \hline
        Ga10As10H30 & 113081 & 1000 & 6115633 & 0.048\% & 1562547805 & 24.439\% &  127.12 \\ \hline
          Ga3As3H12 & 61349 & 613 & 5970947 & 0.159\% & 596645077 & 31.705\% &   42.00 \\ \hline
   shallow\_water1s & 81920 & 819 & 327680 & 0.005\% & 2357535 & 0.070\% &    0.21 \\ \hline
            Si10H16 & 17077 & 171 & 875923 & 0.300\% & 56103003 & 38.474\% &    2.60 \\ \hline
             Si5H12 & 19896 & 199 & 738598 & 0.187\% & 78918573 & 39.871\% &    3.80 \\ \hline
                SiO & 33401 & 334 & 1317655 & 0.118\% & 186085449 & 33.359\% &   10.01 \\ \hline
          wathen100 & 30401 & 304 & 471601 & 0.051\% & 1490209 & 0.322\% &    0.32 \\ \hline
            Ge87H76 & 112985 & 1000 & 7892195 & 0.062\% & 1403571238 & 21.990\% &  109.64 \\ \hline
           Ge99H100 & 112985 & 1000 & 8451395 & 0.066\% & 1477089634 & 23.141\% &  120.08 \\ \hline
        Si41Ge41H72 & 185639 & 1000 & 15011265 & 0.044\% & 3457063398 & 20.063\% &  358.53 \\ \hline
            Si87H76 & 240369 & 1000 & 10661631 & 0.018\% & 5568995364 & 19.277\% & 1499.80 \\ \hline
        Ga41As41H72 & 268096 & 1000 & 18488476 & 0.026\% & 6998257446 & 19.473\% & 2498.43 \\ \hline
  \end{tabular}
  }
\end{table}

%For example, Figure~\ref{fig:spectral} illustrates the spectral distribution of the matrices cfd1 and finance.

%\begin{figure}[htb]
%\centering
%  \hfill
%%  \subfigure[matrix: Andrews]{
%%    \includegraphics[width=0.48\textwidth,height=0.4\textwidth]
%%    {Andrews-ev.eps}
%%  }
%  \subfigure[matrix: cfd1]{
%    \includegraphics[width=0.48\textwidth,height=0.4\textwidth]
%    {cfd1-log-ev.eps}
%  }
%  \hfill
%  \subfigure[matrix: finance]{
%    \includegraphics[width=0.48\textwidth,height=0.4\textwidth]
%    {finance-ev.eps}
%  }
%%  \hfill
%%    \subfigure[matrix: Ga10As10H30]{
%%    \includegraphics[width=0.48\textwidth,height=0.4\textwidth]
%%    {Ga10As10H30-log-ev.eps}
%%  }
%\caption{spectral distribution $\lambda(A)$}
%\label{fig:spectral}
%\end{figure}

\subsection{Comparison between RR and ARR}
\label{sec:comp-ARR}
We first evaluate the performance difference between ARR and RR for both
\mpm~ and \gn.   Table \ref{tab:ARR} gives results for computing both $k$ 
largest and smallest eigenpairs on the first six matrices in Table \ref{tab:matrix} 
to the accuracy of $\tol=10^{-12}$.  We note that RR and ARR
correspond to $p=0$ and $p>0$, respectively, in Algorithm \ref{alg:Arrabit2}. 
In order to differentiate the effect of changing $p$ from that of changing the
polynomial degree, we also test a variant of Algorithm~\ref{alg:Arrabit2} with 
a fixed polynomial degree at $d=8$ (by skipping line 34). 
In Table \ref{tab:ARR}, ``$\maxres$'' denotes the maximum relative 
residual norm in \eqref{eq:out-stop},  ``time'' is the runtime measured 
in seconds, ``RR'' is the total number of the outer iterations, i.e.,
the total number of the RR or ARR calls made (excluding the one
called in preprocessing for estimating $\lambda_{k+q}$), 
and ``$p$'' and ``$d$'' are the number of augmentation blocks 
and the polynomial degree, respectively, used at the final outer iteration. 
In addition, on the matrices cfd1 and finance we plot the (outer) 
iteration history of $\maxres$ in 
Figures \ref{fig:resi-ARR-largest} and \ref{fig:resi-ARR-smallest}
for computing $k$ largest and smallest eigenpairs, respectively. 

The following observations can be drawn from the table and figures.
\begin{itemize}

\item The performances of \mpm~ and \gn~ are similar.
  For both of them, ARR can accelerate convergence, reduce the number 
  of outer iterations needed, and improve the accuracy, often to a great extent. 
  
\item The scheme of adaptive polynomial degree generally works better
  than a fixed polynomial degree. A more detailed look at the effect of
  polynomial degrees is presented in Section~\ref{sec:comp-deg}. 

\item The default value $p=1$ for the number of augmentation blocks 
  in ARR is generally kept unchanged 
  (recall that it can be increased by the algorithm).

\item The total number of ARR called is mostly very small, especially in the 
  cases where the adaptive polynomial degree scheme is used and the $k$ 
  largest eigenpairs are computed (which tend to be easier than the $k$ 
  smallest ones).  We observe from Figure \ref{fig:resi-ARR-largest} 
  that in several cases a single ARR is sufficient to reach the accuracy 
  of $\tol$=1e-6 (even of $\tol$=1e-12 in one case).
  
\end{itemize}

\begin{table}[htbp]
  \setlength{\tabcolsep}{2pt}
  \centering
  \caption{Comparison results between RR and ARR with tol=1e-12} 
  \label{tab:ARR}
{\footnotesize
  \begin{tabular}{rrrr rrrr rrrr rrrr rrrr}
\hline
& \multicolumn{4}{c}{\mpm~ with RR} &&
 \multicolumn{4}{c}{\mpm~ with ARR} && \multicolumn{4}{c}{\gn~ with RR}  &&
 \multicolumn{4}{c}{\gn~ with ARR}  \\
 \cline{2-5} \cline{7-10} \cline{12-15} \cline{17-20}
  matrix & maxres   & time & RR & p/d && maxres & time  & RR & p/d &&  maxres  & time  & RR & p/d &&  maxres  & time  & RR & p/d \\ \hline

\multicolumn{20}{c}{ computing k largest eigpair by fix deg = 8} \\ \hline 
        Andrew. & 9.5e-13 &      191 &  4 &  1/ 8&& 1.9e-06 &      250 &  9 &  3/ 8&& 9.0e-12 &      174 &  6 &  1/ 8&& 9.9e-13 &      104 &  2 &  1/ 8 \\ 
            C60 & 4.0e-12 &       45 & 11 &  3/ 8&& 6.3e-12 &       12 &  3 &  1/ 8&& 7.5e-12 &       44 & 22 &  3/ 8&& 1.4e-12 &       16 &  5 &  1/ 8 \\ 
           cfd1 & 9.8e-13 &      381 &  4 &  1/ 8&& 1.0e-12 &      296 &  4 &  1/ 8&& 9.8e-13 &      294 &  4 &  1/ 8&& 9.9e-13 &      206 &  2 &  1/ 8 \\ 
        financ. & 9.9e-13 &      157 &  3 &  1/ 8&& 8.9e-13 &      151 &  3 &  1/ 8&& 1.0e-12 &      196 &  4 &  1/ 8&& 1.0e-12 &      141 &  2 &  1/ 8 \\ 
        Ga10As. & 3.5e-13 &     1218 & 22 &  3/ 8&& 9.9e-13 &     1483 &  8 &  2/ 8&& 6.1e-12 &      910 &  8 &  1/ 8&& 9.9e-13 &      448 &  3 &  1/ 8 \\ 
        Ga3As3. & 9.7e-13 &      467 &  6 &  1/ 8&& 9.8e-13 &      270 &  5 &  1/ 8&& 1.9e-12 &      307 &  8 &  1/ 8&& 9.4e-13 &      179 &  3 &  1/ 8 \\ 
 \hline

\multicolumn{20}{c}{ computing k largest eigpair  with adaptive polynomial degree} \\ \hline 
        Andrew. & 2.0e-11 &      337 &  9 &  3/ 5&& 8.8e-13 &      148 &  5 &  2/ 5&& 5.3e-12 &      319 & 17 &  3/ 5&& 1.0e-12 &      125 &  4 &  1/ 5 \\ 
            C60 & 8.7e-12 &       41 & 10 &  3/ 9&& 2.0e-12 &       13 &  3 &  1/ 9&& 4.2e-12 &       42 & 20 &  3/ 9&& 5.5e-12 &       13 &  3 &  1/ 9 \\ 
           cfd1 & 1.3e-12 &      441 &  5 &  1/ 3&& 9.8e-13 &      190 &  4 &  1/ 3&& 4.1e-12 &      482 & 17 &  3/ 3&& 9.9e-13 &      188 &  3 &  1/ 3 \\ 
        financ. & 9.9e-13 &      256 &  4 &  1/ 3&& 1.3e-12 &       97 &  3 &  2/ 3&& 2.7e-12 &      380 & 14 &  3/ 3&& 1.1e-12 &       69 &  1 &  1/ 3 \\ 
        Ga10As. & 4.7e-12 &     1199 &  6 &  1/ 5&& 9.6e-13 &      442 &  4 &  1/ 5&& 7.1e-12 &     1442 & 19 &  3/ 5&& 9.7e-13 &      580 &  4 &  1/ 6 \\ 
        Ga3As3. & 2.9e-12 &      473 &  7 &  2/ 5&& 1.7e-12 &      169 &  4 &  1/ 5&& 3.9e-12 &      494 & 17 &  3/ 5&& 1.7e-12 &      198 &  4 &  1/ 5 \\ 
 \hline

\multicolumn{20}{c}{ computing k smallest eigpair by fix deg = 8} \\ \hline 
        Andrew. & 4.2e-12 &      465 &  7 &  2/ 8&& 1.5e-13 &      219 &  6 &  2/ 8&& 7.2e-12 &      475 & 19 &  3/ 8&& 1.0e-12 &      199 &  5 &  1/ 8 \\ 
            C60 & 1.7e-12 &       30 &  9 &  3/ 8&& 6.8e-13 &       17 &  6 &  1/ 8&& 5.5e-12 &       24 & 13 &  3/ 8&& 6.7e-12 &       13 &  4 &  1/ 8 \\ 
           cfd1 & 4.1e-05 &     2870 & 30 &  3/ 8&& 6.0e-12 &     1543 & 21 &  3/ 8&& 1.5e-04 &     2505 & 30 &  3/ 8&& 7.9e-12 &     1394 & 22 &  3/ 8 \\ 
        financ. & 3.8e-08 &     1759 & 30 &  3/ 8&& 5.1e-13 &      700 &  9 &  3/ 8&& 3.5e-06 &     1651 & 30 &  3/ 8&& 7.2e-13 &      713 & 11 &  3/ 8 \\ 
        Ga10As. & 8.6e-10 &     2642 & 10 &  3/ 8&& 3.7e-12 &     1372 &  5 &  1/ 8&& 2.1e-02 &     1436 &  6 &  1/ 8&& 2.6e-12 &      961 &  4 &  1/ 8 \\ 
        Ga3As3. & 7.2e-12 &      964 & 11 &  3/ 8&& 2.7e-12 &      489 &  4 &  1/ 8&& 4.2e-12 &      994 & 24 &  3/ 8&& 9.9e-13 &      381 &  4 &  1/ 8 \\ 
 \hline

\multicolumn{20}{c}{ computing k smallest eigpair  with adaptive polynomial degree} \\ \hline 
        Andrew. & 7.3e-12 &      466 &  8 &  3/ 8&& 9.7e-13 &      200 &  4 &  1/ 8&& 8.9e-12 &      505 & 21 &  3/ 8&& 1.1e-12 &      185 &  5 &  1/ 8 \\ 
            C60 & 6.7e-12 &       38 &  9 &  3/ 7&& 2.8e-12 &       26 &  9 &  3/ 6&& 4.0e-12 &       31 & 23 &  3/ 6&& 9.2e-13 &       15 &  8 &  2/ 6 \\ 
           cfd1 & 3.7e-08 &     2869 & 30 &  3/15&& 8.9e-12 &      719 &  4 &  1/15&& 2.3e-06 &     2515 & 30 &  3/15&& 4.2e-12 &     1017 & 12 &  3/15 \\ 
        financ. & 3.7e-12 &     1391 &  9 &  3/15&& 1.4e-12 &      600 &  6 &  1/15&& 5.3e-12 &     1416 & 24 &  3/15&& 3.4e-12 &      467 &  5 &  1/15 \\ 
        Ga10As. & 4.5e-11 &     3261 & 12 &  3/ 8&& 1.1e-12 &     1558 &  6 &  1/ 8&& 2.9e-12 &     3681 & 24 &  3/ 8&& 4.0e-12 &      963 &  3 &  1/ 9 \\ 
        Ga3As3. & 5.9e-12 &     1046 &  8 &  3/ 9&& 9.9e-13 &      420 &  4 &  1/ 9&& 7.7e-12 &     1238 & 24 &  3/ 9&& 9.5e-13 &      338 &  5 &  1/ 9 \\ 
 \hline

\end{tabular}
}
\end{table}

\begin{figure}[htb]
\centering
 \hfill
    \subfigure[matrix: cfd1]{
    \includegraphics[width=0.48\textwidth,height=0.4\textwidth]
    {ARR-cfd1-k707-maxeig-tol-12.eps}
  }
  \hfill
    \subfigure[matrix: finance]{
    \includegraphics[width=0.48\textwidth,height=0.4\textwidth]
    {ARR-finance-k748-maxeig-tol-12.eps}
  }
\caption{ARR vs RR: Iteration history of $\maxres$ for computing $k$ largest
eigenpairs}
\label{fig:resi-ARR-largest}

%\end{figure}
\vspace{.75cm}
%\begin{figure}

\centering
 \hfill
    \subfigure[matrix: cfd1]{
    \includegraphics[width=0.48\textwidth,height=0.4\textwidth]
    {ARR-cfd1-k707-mineig-tol-12.eps}
  }
  \hfill
    \subfigure[matrix: finance]{
    \includegraphics[width=0.48\textwidth,height=0.4\textwidth]
    {ARR-finance-k748-mineig-tol-12.eps}
  }
\caption{ARR vs RR: Iteration history of $\maxres$ for computing $k$ smallest
eigenpairs}
\label{fig:resi-ARR-smallest}
\end{figure}

\subsection{Comparison on Polynomials}
\label{sec:comp-deg}

We next examine the effect of polynomial degrees on the convergence behavior 
of \mpm~ and \gn, again on the first six matrices in Table \ref{tab:matrix}. 
We compare two schemes: the first is to use a fix degree among $\{4, 8, 15\}$ 
and skip line 34 of Algorithm~\ref{alg:Arrabit2}, and the second is the adaptive
scheme in Algorithm~\ref{alg:Arrabit2}.  The computational results are 
summarized in Table \ref{tab:polydeg}. We also plot the iteration history of 
$\maxres$, for computing both $k$ largest and smallest eigenpairs on 
the matrices cfd1 and finance in
Figures \ref{fig:resi-polydeg-largest} and \ref{fig:resi-polydeg-smallest},
respectively.  The numerical results lead to the following observations:
\begin{itemize}

\item 
Again the performances of \mpm~ and \gn~ are similar, and
the default value $p=1$ for augmentation is mostly unchanged.

\item
In general, the number of outer iterations is decreased as the 
polynomial degree is increased, but the runtime time is not necessarily 
reduced because of the extra cost in using higher-degree polynomials.
Overall, our adaptive strategy seems to have achieved a reasonable 
balance.

\item 
With fixed polynomial degrees, in a small number of test case
\mpm~ and \gn~  fail to reach the required accuracy.
\end{itemize}
 
\begin{figure}[htb]
\centering
  \hfill
      \subfigure[matrix: cfd1]{
    \includegraphics[width=0.48\textwidth,height=0.4\textwidth]
    {polydeg-cfd1-k707-maxeig-tol-12.eps}
  }
  \hfill
    \subfigure[matrix: finance]{
    \includegraphics[width=0.48\textwidth,height=0.4\textwidth]
    {polydeg-finance-k748-maxeig-tol-12.eps}
  }
\caption{ARR: Iteration history of $\maxres$ for computing $k$ largest
eigenpairs using different polynomial degrees}
\label{fig:resi-polydeg-largest}

%\end{figure}
%\vspace{.75cm}
%\begin{figure}[htb]

\centering
 \subfigure[matrix: cfd1]{
    \includegraphics[width=0.48\textwidth,height=0.4\textwidth]
    {polydeg-cfd1-k707-mineig-tol-12.eps}
  }
  \hfill
    \subfigure[matrix: finance]{
    \includegraphics[width=0.48\textwidth,height=0.4\textwidth]
    {polydeg-finance-k748-mineig-tol-12.eps}
  }
\caption{ARR: Iteration history of $\maxres$ for computing $k$ smallest
eigenpairs using different polynomial degrees}
\label{fig:resi-polydeg-smallest}
\end{figure}

\begin{table}[htbp]
  \setlength{\tabcolsep}{2pt}
  \centering
  \caption{Comparison results of different polynomial degrees on tol=1e-12} 
  \label{tab:polydeg}
{\footnotesize
  \begin{tabular}{rrrr rrrr rrrr rrrr rrrr}
\hline
& \multicolumn{4}{c}{deg=4} &&
 \multicolumn{4}{c}{deg=8} && \multicolumn{4}{c}{deg=15}  &&
 \multicolumn{4}{c}{adaptive deg}  \\
 \cline{2-5} \cline{7-10} \cline{12-15} \cline{17-20}

  matrix & maxres   & time & RR & p/d && maxres & time  & RR & p/d &&  maxres  & time  & RR & p/d &&  maxres  & time  & RR & p/d\\ \hline

\multicolumn{20}{c}{MPM  for k largest eigpair} \\ \hline 
        Andrew. & 1.1e-12 &      127 &  5 &  2/ 4&& 1.9e-06 &      250 &  9 &  3/ 8&& 4.3e-12 &      165 &  4 &  1/15&& 8.8e-13 &      148 &  5 &  2/ 5 \\ 
            C60 & 1.6e-12 &       18 &  6 &  3/ 4&& 6.3e-12 &       12 &  3 &  1/ 8&& 9.7e-13 &       24 &  3 &  2/15&& 2.0e-12 &       13 &  3 &  1/ 9 \\ 
           cfd1 & 1.8e-12 &      206 &  3 &  1/ 4&& 1.0e-12 &      296 &  4 &  1/ 8&& 2.8e-12 &      411 &  5 &  2/15&& 9.8e-13 &      190 &  4 &  1/ 3 \\ 
        financ. & 9.9e-13 &      102 &  3 &  1/ 4&& 8.9e-13 &      151 &  3 &  1/ 8&& 9.0e-13 &      175 &  4 &  1/15&& 1.3e-12 &       97 &  3 &  2/ 3 \\ 
        Ga10As. & 1.3e-12 &      906 &  8 &  2/ 4&& 9.9e-13 &     1483 &  8 &  2/ 8&& 2.8e-01 &     5908 &  6 &  1/15&& 9.6e-13 &      442 &  4 &  1/ 5 \\ 
        Ga3As3. & 7.6e-13 &      377 &  7 &  1/ 4&& 9.8e-13 &      270 &  5 &  1/ 8&& 2.8e-01 &     1483 &  6 &  1/15&& 1.7e-12 &      169 &  4 &  1/ 5 \\ 
 \hline 

\multicolumn{20}{c}{SLRP  for k largest eigpair} \\ \hline 
        Andrew. & 1.5e-12 &      116 &  4 &  1/ 4&& 9.9e-13 &      104 &  2 &  1/ 8&& 1.2e-13 &      187 &  2 &  1/15&& 1.0e-12 &      125 &  4 &  1/ 5 \\ 
            C60 & 1.5e-12 &       24 &  9 &  3/ 4&& 1.4e-12 &       16 &  5 &  1/ 8&& 7.1e-13 &       19 &  3 &  1/15&& 5.5e-12 &       13 &  3 &  1/ 9 \\ 
           cfd1 & 9.6e-13 &      185 &  2 &  1/ 4&& 9.9e-13 &      206 &  2 &  1/ 8&& 1.7e-13 &      324 &  2 &  1/15&& 9.9e-13 &      188 &  3 &  1/ 3 \\ 
        financ. & 1.2e-12 &       77 &  1 &  1/ 4&& 1.0e-12 &      141 &  2 &  1/ 8&& 2.7e-13 &      327 &  2 &  1/15&& 1.1e-12 &       69 &  1 &  1/ 3 \\ 
        Ga10As. & 5.9e-13 &      734 &  7 &  2/ 4&& 9.9e-13 &      448 &  3 &  1/ 8&& 2.9e-01 &     1122 &  6 &  1/15&& 9.7e-13 &      580 &  4 &  1/ 6 \\ 
        Ga3As3. & 8.4e-12 &      205 &  4 &  1/ 4&& 9.4e-13 &      179 &  3 &  1/ 8&& 6.4e-02 &      442 &  6 &  1/15&& 1.7e-12 &      198 &  4 &  1/ 5 \\ 
 \hline

\multicolumn{20}{c}{MPM  for k smallest eigpair} \\ \hline 
        Andrew. & 4.1e-13 &      247 &  9 &  3/ 4&& 1.5e-13 &      219 &  6 &  2/ 8&& 9.9e-13 &      448 &  5 &  1/15&& 9.7e-13 &      200 &  4 &  1/ 8 \\ 
            C60 & 1.6e-07 &       20 &  7 &  3/ 4&& 6.8e-13 &       17 &  6 &  1/ 8&& 7.9e-13 &       26 &  5 &  1/15&& 2.8e-12 &       26 &  9 &  3/ 6 \\ 
           cfd1 & 2.5e-07 &     1626 & 30 &  3/ 4&& 6.0e-12 &     1543 & 21 &  3/ 8&& 4.3e-12 &     1340 &  9 &  3/15&& 8.9e-12 &      719 &  4 &  1/15 \\ 
        financ. & 6.9e-12 &     1002 & 21 &  3/ 4&& 5.1e-13 &      700 &  9 &  3/ 8&& 1.0e-12 &      586 &  5 &  1/15&& 1.4e-12 &      600 &  6 &  1/15 \\ 
        Ga10As. & 9.4e-12 &     1893 & 15 &  3/ 4&& 3.7e-12 &     1372 &  5 &  1/ 8&& 1.8e-06 &     2198 &  6 &  2/15&& 1.1e-12 &     1558 &  6 &  1/ 8 \\ 
        Ga3As3. & 4.9e-12 &      569 & 11 &  3/ 4&& 2.7e-12 &      489 &  4 &  1/ 8&& 9.7e-13 &      471 &  4 &  1/15&& 9.9e-13 &      420 &  4 &  1/ 9 \\ 
 \hline 

\multicolumn{20}{c}{SLRP  for k smallest eigpair} \\ \hline 
        Andrew. & 4.6e-12 &      315 & 10 &  3/ 4&& 1.0e-12 &      199 &  5 &  1/ 8&& 9.9e-13 &      208 &  3 &  1/15&& 1.1e-12 &      185 &  5 &  1/ 8 \\ 
            C60 & 1.2e-12 &       16 &  9 &  2/ 4&& 6.7e-12 &       13 &  4 &  1/ 8&& 4.1e-13 &       16 &  3 &  1/15&& 9.2e-13 &       15 &  8 &  2/ 6 \\ 
           cfd1 & 9.1e-07 &     1956 & 30 &  3/ 4&& 7.9e-12 &     1394 & 22 &  3/ 8&& 5.2e-12 &     1121 & 12 &  3/15&& 4.2e-12 &     1017 & 12 &  3/15 \\ 
        financ. & 7.4e-12 &     1223 & 22 &  3/ 4&& 7.2e-13 &      713 & 11 &  3/ 8&& 1.6e-12 &      535 &  6 &  1/15&& 3.4e-12 &      467 &  5 &  1/15 \\ 
        Ga10As. & 1.6e-12 &     1625 &  8 &  3/ 4&& 2.6e-12 &      961 &  4 &  1/ 8&& 1.0e-12 &      999 &  3 &  1/15&& 4.0e-12 &      963 &  3 &  1/ 9 \\ 
        Ga3As3. & 4.8e-12 &      532 & 10 &  3/ 4&& 9.9e-13 &      381 &  4 &  1/ 8&& 9.8e-13 &      374 &  3 &  1/15&& 9.5e-13 &      338 &  5 &  1/ 9 \\ 
 \hline

\end{tabular}
  }
\end{table}

Finally, we compare the performance of Algorithm \ref{alg:Arrabit2} either using
Chebyshev interpolates defined in \eqref{eq:chebfun-f} or the Chebyshev
polynomials defined in \eqref{eq:cheby-poly} on the first six matrices in 
Table \ref{tab:matrix}.   The comparison results are given in Table~
\ref{tab:comp-cheby}.   Even though both types of polynomials work well
on these six problems, some performance differences are still observable 
in favor of our polynomials. % \revise{(keep this part or not?)}

\begin{table}[htbp]
  \setlength{\tabcolsep}{2pt}
  \centering
  \caption{Comparison results on Chebyshev interpolates in \eqref{eq:chebfun-f} 
  and Chebyshev polynomials in \eqref{eq:cheby-poly}} 
  \label{tab:comp-cheby}
{\footnotesize
  \begin{tabular}{rrrr rrrr rrrr rrrr rrrr}
\hline
& \multicolumn{4}{c}{\mpm~ } &&
 \multicolumn{4}{c}{\mpm, Cheb. poly.} && \multicolumn{4}{c}{\gn}  &&
 \multicolumn{4}{c}{\gn, Cheb. poly.}  \\
 \cline{2-5} \cline{7-10} \cline{12-15} \cline{17-20}

%%%
name &  maxres  & time & RR & p/d &&  maxres  & time & RR & p/d &&  maxres  & time & RR & p/d &&  maxres  & time & RR & p/d \\ \hline

\multicolumn{20}{c}{ computing k largest eigpair, tol=1e-6 } \\ \hline 
 Andrew.  & 2.6e-8 &     58 &   2 &   1/  5 && 1.1e-6 &     60 &   2 &   1/  3 && 3.0e-8 &     92 &   2 &   1/  5 && 6.0e-7 &     89 &   2 &   1/  3 \\ 
     C60  & 1.1e-9 &     13 &   2 &   1/  9 && 3.9e-8 &      9 &   2 &   1/  5 && 5.2e-7 &      9 &   2 &   1/  8 && 8.8e-6 &     12 &   3 &   1/  5 \\ 
    cfd1  & 5.6e-9 &    155 &   2 &   1/  3 && 7.4e-7 &    144 &   2 &   1/  2 && 1.5e-7 &    143 &   1 &   1/  3 && 1.5e-7 &    146 &   1 &   1/  3 \\ 
 financ.  & 1.6e-6 &     37 &   1 &   1/  3 && 1.5e-10 &     51 &   1 &   1/  3 && 1.1e-12 &     67 &   1 &   1/  3 && 1.2e-10 &     68 &   1 &   1/  3 \\ 
 Ga10As.  & 5.7e-8 &    264 &   2 &   1/  5 && 4.6e-8 &    550 &   4 &   1/  2 && 9.2e-7 &    380 &   2 &   1/  5 && 2.0e-7 &    484 &   3 &   1/  3 \\ 
 Ga3As3.  & 6.4e-8 &    101 &   2 &   1/  5 && 1.6e-6 &    112 &   3 &   1/  3 && 5.3e-7 &    136 &   2 &   1/  5 && 6.2e-6 &    125 &   2 &   1/  3 \\ 
\hline

\multicolumn{20}{c}{ computing k largest eigpair, tol=1e-12 } \\ \hline 
 Andrew.  & 8.8e-13 &    148 &   5 &   2/  5 && 2.9e-12 &    199 &   7 &   2/ 10 && 1.0e-12 &    125 &   4 &   1/  5 && 1.5e-12 &    160 &   7 &   3/  3 \\ 
     C60  & 2.0e-12 &     13 &   3 &   1/  9 && 4.6e-12 &     16 &   6 &   3/  5 && 5.5e-12 &     13 &   3 &   1/  9 && 4.5e-12 &     23 &  12 &   3/  5 \\ 
    cfd1  & 9.8e-13 &    190 &   4 &   1/  3 && 3.3e-13 &    230 &   6 &   2/  2 && 9.9e-13 &    188 &   3 &   1/  3 && 1.8e-12 &    215 &   4 &   1/  2 \\ 
 financ.  & 1.3e-12 &     97 &   3 &   2/  3 && 6.8e-12 &     87 &   3 &   1/  2 && 1.1e-12 &     69 &   1 &   1/  3 && 9.9e-13 &     93 &   2 &   1/  2 \\ 
 Ga10As.  & 9.6e-13 &    442 &   4 &   1/  5 && 9.0e-12 &    643 &   9 &   3/  3 && 9.7e-13 &    580 &   4 &   1/  6 && 1.3e-12 &    807 &   9 &   3/  3 \\ 
 Ga3As3.  & 1.7e-12 &    169 &   4 &   1/  5 && 2.2e-12 &    239 &   9 &   3/  3 && 1.7e-12 &    198 &   4 &   1/  5 && 4.7e-13 &    285 &   9 &   3/  3 \\ 
\hline

\multicolumn{20}{c}{ computing k smallest eigpair, tol=1e-6 } \\ \hline 
 Andrew.  & 4.2e-7 &    113 &   2 &   1/  8 && 6.1e-7 &    122 &   3 &   1/  5 && 5.2e-9 &    168 &   3 &   1/  8 && 2.6e-6 &    175 &   4 &   1/  5 \\ 
     C60  & 9.6e-7 &     16 &   4 &   2/  6 && 1.3e-6 &     11 &   3 &   1/  4 && 2.4e-6 &      9 &   3 &   1/  3 && 1.4e-6 &     10 &   4 &   1/  4 \\ 
    cfd1  & 3.4e-7 &    601 &   2 &   1/ 15 && 5.0e-6 &    427 &   2 &   1/ 15 && 4.8e-6 &    614 &   5 &   2/ 15 && 2.7e-6 &    607 &   5 &   2/ 15 \\ 
 financ.  & 1.7e-6 &    338 &   2 &   1/ 15 && 3.2e-6 &    310 &   2 &   1/ 10 && 5.3e-9 &    379 &   3 &   1/ 15 && 9.3e-7 &    333 &   3 &   1/ 10 \\ 
 Ga10As.  & 6.2e-6 &    751 &   2 &   1/  8 && 2.9e-6 &    744 &   3 &   1/  5 && 1.8e-6 &    715 &   2 &   1/  7 && 2.8e-6 &    907 &   3 &   1/  5 \\ 
 Ga3As3.  & 6.9e-6 &    325 &   2 &   1/  9 && 4.2e-7 &    269 &   2 &   1/  5 && 1.7e-9 &    282 &   3 &   1/  9 && 1.6e-6 &    369 &   5 &   2/  5 \\ 
\hline

\multicolumn{20}{c}{ computing k smallest eigpair, tol=1e-12 } \\ \hline 
 Andrew.  & 9.7e-13 &    200 &   4 &   1/  8 && 7.3e-12 &    243 &   8 &   3/  5 && 1.1e-12 &    185 &   5 &   1/  8 && 7.0e-12 &    293 &  11 &   3/  5 \\ 
     C60  & 2.8e-12 &     26 &   9 &   3/  6 && 3.4e-12 &     23 &   9 &   3/  4 && 9.2e-13 &     15 &   8 &   2/  6 && 2.1e-12 &     18 &  11 &   3/  4 \\ 
    cfd1  & 8.9e-12 &    719 &   4 &   1/ 15 && 8.7e-12 &   1033 &  11 &   3/ 15 && 4.2e-12 &   1017 &  12 &   3/ 15 && 9.0e-12 &   1471 &  23 &   3/ 15 \\ 
 financ.  & 1.4e-12 &    600 &   6 &   1/ 15 && 8.6e-12 &    587 &   8 &   3/ 10 && 3.4e-12 &    467 &   5 &   1/ 15 && 9.0e-12 &    637 &  10 &   3/ 10 \\ 
 Ga10As.  & 1.1e-12 &   1558 &   6 &   1/  8 && 7.6e-8 &   1629 &   9 &   3/ 15 && 4.0e-12 &    963 &   3 &   1/  9 && 5.2e-12 &   1496 &   9 &   3/  5 \\ 
 Ga3As3.  & 9.9e-13 &    420 &   4 &   1/  9 && 2.0e-12 &    547 &   9 &   3/  5 && 9.5e-13 &    338 &   5 &   1/  9 && 3.6e-12 &    573 &  14 &   3/  5 \\ 
\hline

\end{tabular}
}
\end{table}

%%%%%%%
\subsection{Comparison with \arpack~and \feast} \label{sec:comp-other}

We now compare \mpm~ and \gn~ with  \eigs~ and \feast~ for computing 
both $k$ largest and smallest eigenpairs for all sixteen test matrices 
presented in Tables \ref{tab:matrix} (which also lists the $k$ values).  
Computational results are summarized in Tables \ref{tab:largest} 
and \ref{tab:smallest}, where ``SpMV'' denotes the total number of SpMVs,
counting each operation $AX \in \R^{n\times k}$ as $k$ SpMVs.

In addition, the speedup with respect to the benchmark time of \eigs~ is
measured by the quantity $\log_{2}(\mathrm{time}_{\eigs}/\mathrm{time})$,
as shown in Figures \ref{fig:comp-time-largest} and \ref{fig:comp-time-smallest}
where a positive bar represents a ``speedup'' and a negative one 
a ``slowdown''.   In these two figures, matrices are ordered
from left to right in ascending order of the solution time used by
\eigs; that is, when moving from the left towards the right, problems 
become progressively more and more time-consuming for \eigs~ to solve.
A quick glance at the figures tells us that \mpm~and \gn~ provide clear 
speedups over \eigs~ on most problems, especially on the more time-consuming
problems towards the right. For example, \mpm~and \gn~ deliver
a speedup of about 4 times on each of the seven most time-consuming 
problems in Figure \ref{fig:comp-time-largest}(a), and a speedup of about 
10 times on the most time-consuming problem Ga41As41H72 in 
Figure \ref{fig:comp-time-smallest}(a).  On the other hand, compared to
\eigs, \feast's timing profile looks volatile with both big ``speedups'' and  
``slowdowns''.

%%%%%%%%%%%%
\begin{figure}[htb!]
\centering
 \hfill
 \subfigure[$\tol=10^{-6}$]{
    \includegraphics[width=0.48\textwidth,height=0.4\textwidth]
    {cmpt-maxeig-tol6-eigmpmfeast.eps}
  }
  \hfill
    \subfigure[$\tol=10^{-12}$]{
    \includegraphics[width=0.48\textwidth,height=0.4\textwidth]
    {cmpt-maxeig-tol12-eigmpmfeast.eps}
  }
\caption{Speedup to \eigs: $\log_{2}(\mathrm{time}_{\eigs}/\mathrm{time})$ on
computing $k$ largest eigenpairs}
\label{fig:comp-time-largest}

%\end{figure}
%\vspace{.75cm}
%\begin{figure}[htb]

\centering
 \hfill
    \subfigure[$\tol=10^{-6}$]{
    \includegraphics[width=0.48\textwidth,height=0.4\textwidth]
    {cmpt-mineig-tol6-eigmpmfeast.eps}
  }
  \hfill
    \subfigure[$\tol=10^{-12}$]{
    \includegraphics[width=0.48\textwidth,height=0.4\textwidth]
    {cmpt-mineig-tol12-eigmpmfeast.eps}
  }
\caption{Speedup to \eigs: $\log_{2}(\mathrm{time}_{\eigs}/\mathrm{time})$ on
computing $k$ smallest eigenpairs}
\label{fig:comp-time-smallest}
\end{figure}

%%%%%%%%%%
\begin{table}[htbp]
  \centering
    \setlength{\tabcolsep}{1.2pt}
  \caption{Comparison results on computing $k$ largest eigenpairs} 
  \label{tab:largest}
{\footnotesize
  \begin{tabular}{r|rrr rrrr rrrr rrrr rr}
  %\begin{tabular}{|r|r|r|r| r|r|r|r| r|r|r|r| r|r|r|r| r|r|}
\hline
& \multicolumn{3}{c}{\eigs} &&
 \multicolumn{3}{c}{\feast} && \multicolumn{4}{c}{\mpm}  &&
 \multicolumn{4}{c}{\gn}  \\
\cline{2-4} \cline{6-8} \cline{10-13} \cline{15-18}

name & maxres & time & SpMV && maxres & time & RR &&  maxres  & time & SpMV & RR/p/d &&  maxres  & time & SpMV & RR/p/d \\ \hline

\multicolumn{13}{c}{tol=1e-6 } \\ \hline 
 Andrew. & 1.0e-7 &    218 & 3e+3 && 1.0e-8 &    254 &   5 && 2.6e-8 &     58 & 6e+4 &   2/  1/  5 && 3.0e-8 &     92 & 6e+4 &   2/  1/  5 \\ 
     C60 & 4.9e-8 &     13 & 2e+3 && 7.9e-9 &     59 &   3 && 1.1e-9 &     13 & 5e+4 &   2/  1/  9 && 5.2e-7 &      9 & 3e+4 &   2/  1/  8 \\ 
    cfd1 & 2.5e-14 &    338 & 3e+3 && 4.2e-8 &    113 &   4 && 5.6e-9 &    155 & 6e+4 &   2/  1/  3 && 1.5e-7 &    143 & 4e+4 &   1/  1/  3 \\ 
 financ. & 3.1e-14 &    287 & 3e+3 && 6.1e-10 &     41 &   3 && 1.6e-6 &     37 & 2e+4 &   1/  1/  3 && 1.1e-12 &     67 & 3e+4 &   1/  1/  3 \\ 
 Ga10As. & 4.2e-14 &   1439 & 8e+3 && 1.6e+0 &   4704 &   2 && 5.7e-8 &    264 & 1e+5 &   2/  1/  5 && 9.2e-7 &    380 & 1e+5 &   2/  1/  5 \\ 
 Ga3As3. & 1.9e-8 &    353 & 5e+3 && 2.9e-1 &  11738 &  21 && 6.4e-8 &    101 & 7e+4 &   2/  1/  5 && 5.3e-7 &    136 & 6e+4 &   2/  1/  5 \\ 
 shallo. & 1.5e-10 &    774 & 8e+3 && 5.2e-9 &     69 &   4 && 4.9e-9 &    207 & 2e+5 &   2/  1/  7 && 9.2e-8 &    207 & 1e+5 &   2/  1/  7 \\ 
 Si10H1. & 5.6e-7 &     10 & 2e+3 && 2.6e-10 &     84 &   3 && 5.2e-9 &     11 & 4e+4 &   2/  1/  9 && 1.2e-10 &     11 & 3e+4 &   2/  1/  9 \\ 
  Si5H12 & 1.5e-12 &     13 & 2e+3 && 1.2e-8 &    170 &   3 && 1.0e-10 &     10 & 3e+4 &   2/  1/  6 && 4.6e-8 &     12 & 3e+4 &   2/  1/  6 \\ 
     SiO & 1.4e-13 &     58 & 3e+3 && 4.1e-7 &    265 &   2 && 1.4e-8 &     23 & 4e+4 &   2/  1/  5 && 4.1e-7 &     29 & 4e+4 &   2/  1/  5 \\ 
 wathen. & 5.5e-14 &     39 & 2e+3 && 6.0e-8 &     11 &   4 && 1.1e-6 &     10 & 2e+4 &   1/  1/  3 && 6.9e-11 &     26 & 4e+4 &   2/  1/  5 \\ 
 Ge87H7. & 1.7e-8 &   1451 & 8e+3 && 5.3e-9 &   8352 &   3 && 6.5e-10 &    439 & 2e+5 &   2/  1/  6 && 1.2e-7 &    392 & 1e+5 &   2/  1/  6 \\ 
 Ge99H1. & 2.5e-14 &   1636 & 8e+3 && 5.6e-7 &   6119 &   2 && 2.3e-9 &    348 & 1e+5 &   2/  1/  6 && 7.4e-8 &    402 & 1e+5 &   2/  1/  6 \\ 
 Si41Ge. & 1.1e-8 &   2909 & 9e+3 && 3.9e-7 &  14929 &   2 && 1.6e-9 &    863 & 2e+5 &   2/  1/  7 && 5.8e-8 &    708 & 1e+5 &   2/  1/  7 \\ 
 Si87H7. & 3.5e-14 &   3568 & 1e+4 && 2.8e-1 &   1702 &   1 && 4.0e-9 &   1126 & 3e+5 &   2/  1/  7 && 1.1e-7 &    882 & 1e+5 &   2/  1/  7 \\ 
 Ga41As. & 7.4e-14 &   4100 & 1e+4 && 8.6e-1 &   1066 &   1 && 1.2e-10 &   1029 & 2e+5 &   3/  1/  5 && 2.1e-7 &   1028 & 1e+5 &   2/  1/  7 \\ 
\hline

name & maxres & time & SpMV && maxres & time & RR &&  maxres  & time & SpMV & RR/p/d &&  maxres  & time & SpMV & RR/p/d \\ \hline

\multicolumn{13}{c}{tol=1e-12 } \\ \hline 
 Andrew. & 5.6e-14 &    232 & 4e+3 && 4.7e-14 &    489 &   9 && 8.8e-13 &    148 & 1e+5 &   5/  2/  5 && 1.0e-12 &    125 & 8e+4 &   4/  1/  5 \\ 
     C60 & 6.3e-13 &     15 & 2e+3 && 2.8e-13 &     89 &   5 && 2.0e-12 &     13 & 5e+4 &   3/  1/  9 && 5.5e-12 &     13 & 4e+4 &   3/  1/  9 \\ 
    cfd1 & 2.5e-14 &    296 & 3e+3 && 7.1e-14 &    204 &   8 && 9.8e-13 &    190 & 8e+4 &   4/  1/  3 && 9.9e-13 &    188 & 6e+4 &   3/  1/  3 \\ 
 financ. & 2.1e-14 &    283 & 3e+3 && 2.1e-14 &     67 &   5 && 1.3e-12 &     97 & 5e+4 &   3/  2/  3 && 1.1e-12 &     69 & 3e+4 &   1/  1/  3 \\ 
 Ga10As. & 4.8e-14 &   1784 & 8e+3 && 1.6e+0 &   4631 &   2 && 9.6e-13 &    442 & 2e+5 &   4/  1/  5 && 9.7e-13 &    580 & 2e+5 &   4/  1/  6 \\ 
 Ga3As3. & 2.1e-14 &    419 & 5e+3 && 2.9e-1 &  11245 &  21 && 1.7e-12 &    169 & 1e+5 &   4/  1/  5 && 1.7e-12 &    198 & 1e+5 &   4/  1/  5 \\ 
 shallo. & 4.6e-13 &    768 & 8e+3 && 1.9e-13 &    121 &   7 && 1.0e-12 &    234 & 2e+5 &   4/  1/  7 && 9.9e-13 &    280 & 2e+5 &   4/  1/  7 \\ 
 Si10H1. & 5.3e-14 &     11 & 2e+3 && 4.0e-13 &    104 &   4 && 6.2e-13 &     10 & 3e+4 &   2/  1/  9 && 3.7e-14 &     12 & 3e+4 &   3/  1/  9 \\ 
  Si5H12 & 1.1e-14 &     15 & 2e+3 && 2.6e-13 &    259 &   5 && 9.5e-13 &     11 & 3e+4 &   2/  1/  6 && 5.3e-12 &     15 & 3e+4 &   3/  1/  6 \\ 
     SiO & 1.4e-14 &     58 & 3e+3 && 4.7e-13 &    533 &   4 && 9.8e-13 &     33 & 5e+4 &   3/  1/  5 && 1.4e-12 &     45 & 6e+4 &   4/  1/  5 \\ 
 wathen. & 4.3e-14 &     36 & 2e+3 && 5.1e-14 &     24 &   8 && 1.1e-12 &     19 & 4e+4 &   2/  1/  5 && 9.8e-13 &     30 & 4e+4 &   3/  1/  5 \\ 
 Ge87H7. & 2.8e-14 &   1524 & 8e+3 && 1.3e-13 &  13993 &   5 && 4.8e-12 &    435 & 2e+5 &   3/  1/  6 && 1.0e-12 &    523 & 2e+5 &   4/  1/  6 \\ 
 Ge99H1. & 8.4e-14 &   1563 & 8e+3 && 2.1e-14 &  13438 &   5 && 3.7e-12 &    395 & 2e+5 &   2/  1/  6 && 9.6e-13 &    569 & 2e+5 &   4/  1/  6 \\ 
 Si41Ge. & 2.6e-14 &   2991 & 9e+3 && 2.5e-14 &  35270 &   5 && 9.9e-13 &    865 & 2e+5 &   3/  1/  7 && 1.1e-12 &    954 & 2e+5 &   3/  1/  7 \\ 
 Si87H7. & 2.8e-14 &   3506 & 1e+4 && 2.8e-1 &   1924 &   1 && 1.0e-12 &   1018 & 2e+5 &   3/  1/  7 && 1.4e-12 &   1102 & 2e+5 &   3/  1/  7 \\ 
 Ga41As. & 7.5e-14 &   4103 & 1e+4 && 8.6e-1 &   1242 &   1 && 7.9e-13 &   1135 & 2e+5 &   3/  1/  7 && 3.7e-12 &   1366 & 2e+5 &   3/  1/  7 \\ 
\hline

\end{tabular}
  }
\end{table}

\begin{table}[htbp]
  \centering
   \setlength{\tabcolsep}{1.2pt}
  \caption{Comparison results on computing $k$ smallest eigenpairs} 
  \label{tab:smallest}
{\footnotesize

  \begin{tabular}{r|rrr rrrr rrrr rrrr rr}
  %\begin{tabular}{|r|r|r|r| r|r|r|r| r|r|r|r| r|r|r|r| r|r|}
\hline
& \multicolumn{3}{c}{\eigs} &&
 \multicolumn{3}{c}{\feast} && \multicolumn{4}{c}{\mpm}  &&
 \multicolumn{4}{c}{\gn}  \\
\cline{2-4} \cline{6-8} \cline{10-13} \cline{15-18}

name & maxres & time & SpMV && maxres & time & RR &&  maxres  & time & SpMV & RR/p/d &&  maxres  & time & SpMV & RR/p/d \\ \hline

\multicolumn{13}{c}{tol=1e-6 } \\ \hline 
 Andrew. & 4.9e-7 &    399 & 7e+3 && 8.5e-8 &    219 &   4 && 4.2e-7 &    113 & 1e+5 &   2/  1/  8 && 5.2e-9 &    168 & 1e+5 &   3/  1/  8 \\ 
     C60 & 2.2e-13 &      8 & 2e+3 && 6.4e-5 &    291 &  16 && 9.6e-7 &     16 & 5e+4 &   4/  2/  6 && 2.4e-6 &      9 & 2e+4 &   3/  1/  3 \\ 
    cfd1 & 4.7e-9 &   3871 & 6e+4 && 4.2e-8 &    167 &   7 && 3.4e-7 &    601 & 7e+5 &   2/  1/ 15 && 4.8e-6 &    614 & 6e+5 &   5/  2/ 15 \\ 
 financ. & 1.2e-9 &   1563 & 2e+4 && 4.5e-8 &     51 &   4 && 1.7e-6 &    338 & 4e+5 &   2/  1/ 15 && 5.3e-9 &    379 & 3e+5 &   3/  1/ 15 \\ 
 Ga10As. & 2.9e-12 &   2740 & 2e+4 && 8.9e-9 &   9302 &   4 && 6.2e-6 &    751 & 3e+5 &   2/  1/  8 && 1.8e-6 &    715 & 2e+5 &   2/  1/  7 \\ 
 Ga3As3. & 1.7e-12 &    599 & 8e+3 && 7.3e-8 &   1837 &   3 && 6.9e-6 &    325 & 2e+5 &   2/  1/  9 && 1.7e-9 &    282 & 2e+5 &   3/  1/  9 \\ 
 shallo. & 3.8e-14 &   1614 & 2e+4 && 6.1e-8 &     69 &   4 && 2.0e-8 &    400 & 4e+5 &   2/  1/ 14 && 4.1e-6 &    261 & 2e+5 &   2/  1/  9 \\ 
 Si10H1. & 1.5e-7 &     14 & 2e+3 && 1.2e-7 &    121 &   4 && 2.8e-7 &     13 & 5e+4 &   2/  1/  8 && 7.1e-6 &     12 & 3e+4 &   2/  1/  8 \\ 
  Si5H12 & 5.8e-12 &     21 & 3e+3 && 1.5e-8 &    166 &   3 && 3.3e-7 &     14 & 4e+4 &   2/  1/  8 && 6.5e-6 &     15 & 3e+4 &   2/  1/  8 \\ 
     SiO & 2.7e-13 &     97 & 5e+3 && 5.6e-8 &    537 &   4 && 4.1e-7 &     46 & 9e+4 &   2/  1/  8 && 8.8e-10 &     57 & 9e+4 &   3/  1/  8 \\ 
 wathen. & 1.4e-9 &    118 & 8e+3 && 8.4e-8 &     10 &   4 && 8.2e-6 &     61 & 2e+5 &   2/  1/ 15 && 2.4e-7 &     63 & 1e+5 &   3/  1/ 15 \\ 
 Ge87H7. & 2.0e-13 &   2559 & 1e+4 && 2.7e-8 &  11268 &   4 && 4.8e-7 &    509 & 3e+5 &   2/  1/  9 && 8.1e-10 &    641 & 2e+5 &   3/  1/  9 \\ 
 Ge99H1. & 2.1e-11 &   2319 & 1e+4 && 1.0e-8 &  11892 &   4 && 4.8e-7 &    568 & 3e+5 &   2/  1/  9 && 2.0e-6 &    564 & 2e+5 &   2/  1/  8 \\ 
 Si41Ge. & 4.1e-9 &   4650 & 1e+4 && 1.2e-8 &  25658 &   4 && 6.3e-7 &   1102 & 3e+5 &   2/  1/ 11 && 4.1e-10 &   1361 & 3e+5 &   3/  1/ 11 \\ 
 Si87H7. & 3.0e-13 &   5458 & 2e+4 && 3.3e+0 &   1842 &   1 && 3.2e-6 &   1201 & 3e+5 &   2/  1/ 11 && 7.4e-6 &   1243 & 2e+5 &   2/  1/ 10 \\ 
 Ga41As. & 3.6e-7 &  32279 & 8e+4 && 8.6e-1 &   1095 &   1 && 2.1e-8 &   3166 & 5e+5 &   3/  1/ 11 && 1.3e-6 &   3193 & 4e+5 &   3/  2/ 11 \\ 
\hline

name & maxres & time & SpMV && maxres & time & RR &&  maxres  & time & SpMV & RR/p/d &&  maxres  & time & SpMV & RR/p/d \\ \hline

\multicolumn{13}{c}{tol=1e-12 } \\ \hline 
 Andrew. & 1.2e-13 &    422 & 7e+3 && 4.1e-13 &    361 &   7 && 9.7e-13 &    200 & 2e+5 &   4/  1/  8 && 1.1e-12 &    185 & 2e+5 &   5/  1/  8 \\ 
     C60 & 2.6e-14 &      9 & 2e+3 && 6.4e-6 &    358 &  21 && 2.8e-12 &     26 & 7e+4 &   9/  3/  6 && 9.2e-13 &     15 & 4e+4 &   8/  2/  6 \\ 
    cfd1 & 2.9e-14 &   4209 & 6e+4 && 5.5e-14 &    383 &  16 && 8.9e-12 &    719 & 9e+5 &   4/  1/ 15 && 4.2e-12 &   1017 & 1e+6 &  12/  3/ 15 \\ 
 financ. & 9.7e-13 &   1776 & 2e+4 && 5.5e-14 &     93 &   8 && 1.4e-12 &    600 & 7e+5 &   6/  1/ 15 && 3.4e-12 &    467 & 4e+5 &   5/  1/ 15 \\ 
 Ga10As. & 2.8e-12 &   3479 & 2e+4 && 9.4e-14 &  17251 &   7 && 1.1e-12 &   1558 & 7e+5 &   6/  1/  8 && 4.0e-12 &    963 & 3e+5 &   3/  1/  9 \\ 
 Ga3As3. & 1.2e-12 &    571 & 8e+3 && 3.8e-13 &   2908 &   5 && 9.9e-13 &    420 & 3e+5 &   4/  1/  9 && 9.5e-13 &    338 & 2e+5 &   5/  1/  9 \\ 
 shallo. & 3.9e-14 &   1532 & 2e+4 && 2.7e-13 &    126 &   8 && 3.2e-12 &    600 & 6e+5 &   5/  1/ 12 && 4.0e-13 &    505 & 4e+5 &   5/  1/ 14 \\ 
 Si10H1. & 7.9e-14 &     18 & 2e+3 && 2.1e-12 &    198 &   7 && 2.0e-12 &     16 & 5e+4 &   4/  1/  8 && 3.9e-13 &     20 & 5e+4 &   5/  1/  8 \\ 
  Si5H12 & 1.5e-13 &     22 & 3e+3 && 3.6e-14 &    228 &   5 && 2.1e-12 &     20 & 6e+4 &   4/  1/  8 && 9.6e-12 &     23 & 6e+4 &   4/  1/  8 \\ 
     SiO & 2.7e-13 &     93 & 5e+3 && 2.7e-13 &    915 &   7 && 6.0e-13 &     64 & 1e+5 &   5/  1/  8 && 9.4e-13 &     68 & 1e+5 &   5/  1/  8 \\ 
 wathen. & 8.2e-13 &    146 & 8e+3 && 1.0e-13 &     18 &   7 && 3.1e-12 &    163 & 5e+5 &   6/  2/ 15 && 1.5e-12 &    120 & 3e+5 &   7/  2/ 15 \\ 
 Ge87H7. & 1.8e-13 &   2250 & 1e+4 && 1.5e-13 &  18852 &   7 && 2.6e-13 &    892 & 4e+5 &   5/  1/  9 && 9.9e-13 &    765 & 3e+5 &   5/  1/  9 \\ 
 Ge99H1. & 1.8e-13 &   2353 & 1e+4 && 6.7e-14 &  17683 &   7 && 9.7e-13 &    986 & 5e+5 &   4/  1/  9 && 9.9e-13 &    804 & 3e+5 &   4/  1/  9 \\ 
 Si41Ge. & 3.3e-13 &   4656 & 2e+4 && 1.3e-13 &  46386 &   7 && 9.9e-12 &   1705 & 5e+5 &   4/  1/ 11 && 9.8e-13 &   1568 & 3e+5 &   5/  1/ 11 \\ 
 Si87H7. & 3.0e-13 &   5487 & 2e+4 && 3.3e+0 &   1854 &   1 && 1.1e-12 &   2284 & 6e+5 &   6/  1/ 11 && 1.1e-12 &   1960 & 4e+5 &   5/  1/ 11 \\ 
 Ga41As. & 5.3e-12 &  33254 & 8e+4 && 8.6e-1 &    998 &   1 && 8.8e-13 &   5700 & 1e+6 &   7/  2/ 11 && 1.7e-12 &   3913 & 5e+5 &   5/  2/ 12 \\ 
\hline

\end{tabular}
  }
\end{table}

%%%%%%%%%%%%
\begin{figure}[htb!]
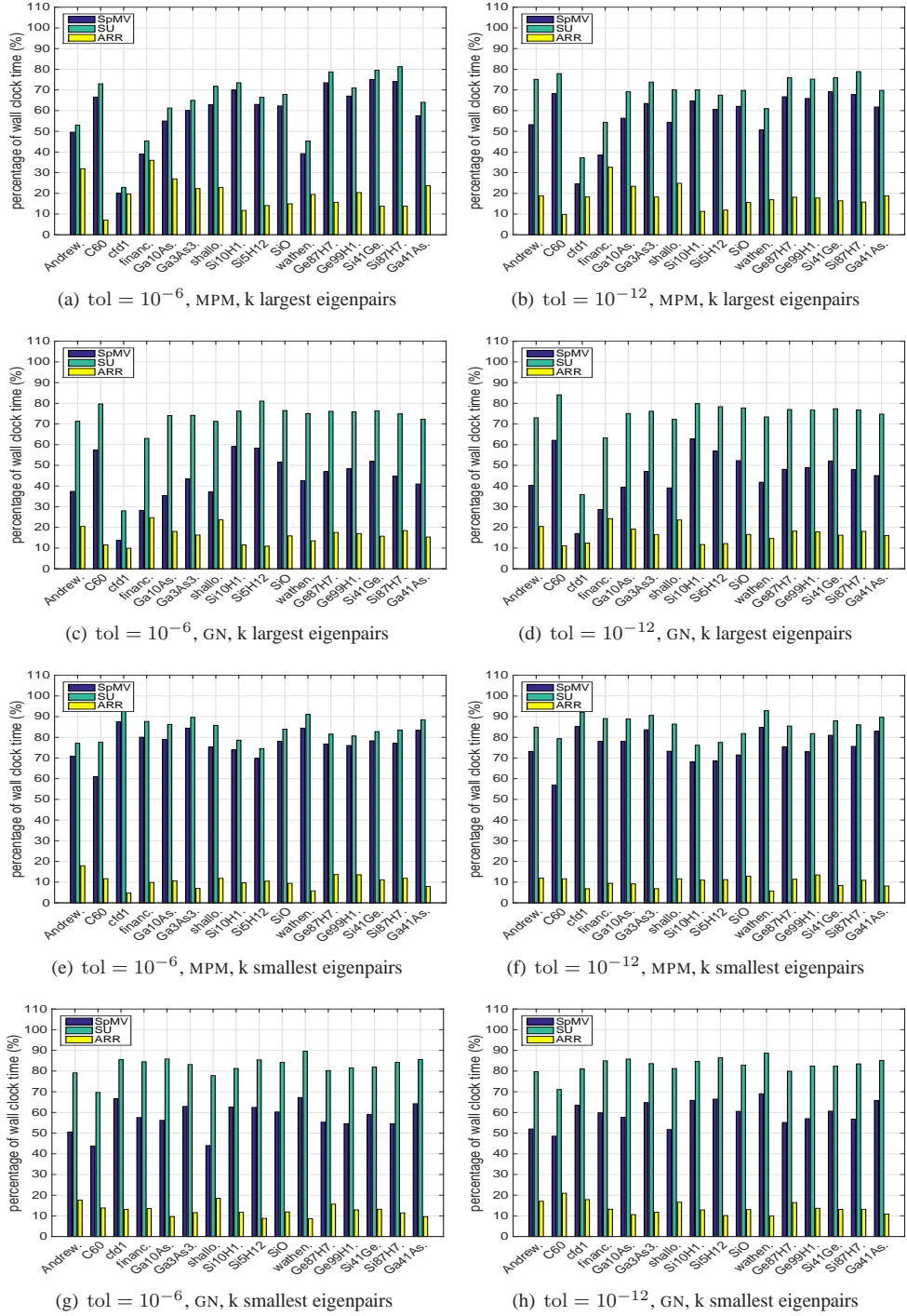

\centering
 \hfill
 \subfigure[$\tol=10^{-6}$, \mpm,  k largest eigenpairs]{
    \includegraphics[width=0.48\textwidth,height=0.3\textwidth]
    {cpu-maxeig-tol6-mpm.eps}
  }
  \hfill
    \subfigure[$\tol=10^{-12}$, \mpm,  k largest eigenpairs]{
    \includegraphics[width=0.48\textwidth,height=0.3\textwidth]
    {cpu-maxeig-tol12-mpm.eps}
  }

 \hfill
    \subfigure[$\tol=10^{-6}$, \gn,  k largest eigenpairs]{
    \includegraphics[width=0.48\textwidth,height=0.3\textwidth]
    {cpu-maxeig-tol6-slrp.eps}
  }
  \hfill
    \subfigure[$\tol=10^{-12}$, \gn, k largest eigenpairs]{
    \includegraphics[width=0.48\textwidth,height=0.3\textwidth]
    {cpu-maxeig-tol12-slrp.eps}
  }

 \hfill
 \subfigure[$\tol=10^{-6}$, \mpm,  k smallest eigenpairs]{
    \includegraphics[width=0.48\textwidth,height=0.3\textwidth]
    {cpu-mineig-tol6-mpm.eps}
  }
  \hfill
    \subfigure[$\tol=10^{-12}$, \mpm,  k smallest eigenpairs]{
    \includegraphics[width=0.48\textwidth,height=0.3\textwidth]
    {cpu-mineig-tol12-mpm.eps}
  }
 \hfill
    \subfigure[$\tol=10^{-6}$, \gn, k smallest eigenpairs]{
    \includegraphics[width=0.48\textwidth,height=0.3\textwidth]
    {cpu-mineig-tol6-slrp.eps}
  }
  \hfill
    \subfigure[$\tol=10^{-12}$, \gn,  k smallest eigenpairs]{
    \includegraphics[width=0.48\textwidth,height=0.3\textwidth]
    {cpu-mineig-tol12-slrp.eps}
  }
\caption{A comparison of timing profile among SpMV, SU and ARR}
\label{fig:cpu-time-percentage}
\end{figure}

The benchmark solver \eigs~ usually, though not always, returns solutions more 
accurate than what is requested by the tolerance value.  In particular, for 
$\tol=10^{-6}$ the accuracy of \eigs~ solutions often reach the order of 
$O(10^{-12})$.  This is due to the fact that \eigs~ need to maintain a 
high working accuracy to ensure proper convergence.

As is observed previously, it is often more time-consuming for \eigs, \mpm~ 
and \gn~ to compute $k$ smallest eigenpairs than $k$ largest ones
on many test matrices.   By examining the spectra of the matrices such as
cfd1 and finance, we believe that this phenomenon is attributable to the 
property that these matrices tend to have a flatter end on the left end of their
spectra.  On the other hand, the behavior of \feast~ appears less affected 
by this property but more by sparsity patterns (see below).

Concerning the performance of \feast, we make the following observations.
\begin{itemize}

\item 
\feast~ solves most problems successfully but fails to correctly solve a few 
cases.  When computing $k$ largest eigenvalues for the matrix Ga10As10H30 
\feast~ returns the warning: 
``No eigenvalue has been found in the proposed search interval''.  
On matrix Ga3As3H12, it seems to exit normally with the output messages 
``Eigensolvers have successfully converged'', but the subsequently computed
maximum relative residual norm in \eqref{eq:out-stop} is way too large at $0.29$. 
On matrices Ga41As41H72 and Si87H76, when computing either $k$ largest or
smallest eigenpairs, \feast~ terminates abnormally after spending a long 
computing time, with the message: 
``Eigensolvers ERROR: Problem from Inner Linear System Solver''.
By examining the density of Cholesky factors for Ga41As41H72 and Si87H76
in Table \ref{tab:matrix}, we speculate that the abnormal termination most likely 
has to do with excessive memory demands encountered by the inner linear 
system solver in Intel Math Kernel Library.

\item  
For $\tol=10^{-12}$, \feast~ is the fastest in solving finance and shallow\_water1s 
for $k$ largest eigenpairs, and in solving cfd1, finance, shallow\_water1s and 
wathen100 for $k$ smallest eigenpairs.  On the other hand, \feast~ can be 
significantly slower than others on matrices such as Ga10As10H30, Ga3As3H12, 
Ge87H76, Ge99H100, Si41Ge41H72, Si87H76 and Ga41As41H72. 
The performance of \feast~ can be at least partly explained from the density of
Cholesky factors $L$ shown in Table \ref{tab:matrix}, since \feast~ uses a direct
linear solver in Intel Math Kernel Library to compute factorizations of matrices 
of the form $(\phi_l I - A)$  in \eqref{eq:feast-rho}.  
We can clearly see the correlation that \feast~ is fast when the density of the 
Cholesky factor is low and Cholesky factorization is fast.

\end{itemize}

With regard to the performance of \mpm~ and \gn, 
we make the following observations.
\begin{itemize}

\item \mpm~ and \gn~ both attain the required accuracy on all test 
problems, and they often return smaller residual errors than what is
required by $\tol$.  Generally speaking, the two variants perform 
quite similarly in terms of both accuracy and timing.
  
\item \mpm~ and \gn~  maintain a clear speed advantage over
\feast~ in most tested cases. They are faster than \feast~ when 
either factorizations of shifted $A$ are expensive, or when spectral 
distributions have a favorable decay (for example, on cfd1 for 
computing $k$ largest eigenpairs).

\item \mpm~ and \gn~  also maintain an overall speed advantage 
over \eigs, especially on those problems more time-consuming for 
\eigs~ (towards the right end of 
Figures \ref{fig:comp-time-largest} and \ref{fig:comp-time-smallest}).
They are faster in spite of taking considerably more matrix-vector 
multiplications than \eigs, as can be seen from Tables \ref{tab:largest} 
and \ref{tab:smallest}, thanks to the benefits of relying on high-concurrency 
operations on many-core computers. 

\item \mpm~ and \gn~ generally require a smaller number ARR calls, 
often only two or three when computing $k$ largest eigenpairs. 
%fewer than or similar to what is required by \feast~ in most cases.  
In quite a number of cases (for example, on finance and wathen100 
for \mpm~and so on), only a single ARR projection is taken which is 
absolutely optimal in order to extract approximate eigenpairs.

\item The number of augmentation blocks used by \mpm~ and \gn~ 
is usually $1$, and the final polynomial degree never reaches the 
maximum degree $15$ except on cfd1, finance and wathen100 when 
computing $k$ smallest eigenpairs. 

\end{itemize}

In Figure \ref{fig:cpu-time-percentage}, we plot runtimes of three categories:
SpMV (i.e., $AX$), SU (lines 10 to 22 of Algorithm \ref{alg:Arrabit2}) and 
ARR (lines 23 to 27 of Algorithm \ref{alg:Arrabit2}). In particular, SpMVs 
are called in both SU and ARR, but overwhelmingly in the former.   
These are the major computational components of \mpm~ and \gn. 
The runtime of each category is measured in the percentage 
of wall-clock time spent in that category over the total wall-clock time.  
We can see, especially from the time-consuming problems on the right, 
that (i) the time of SU dominates that of RR, 
and (ii) the time of SpMVs, always done in batch of $k+q$,
dominates the entire computation in almost all cases.
These trends are much more pronounced (a) for 
\mpm~ than for \gn~ 
(recall that \gn~ requires to solve $k \times k$ linear systems);
and (b) for computing $k$ smallest eigenpairs than for computing 
$k$ largest ones (recall that the former is generally more difficult).
These runtime profiles are favorable to parallel scalability since 
$AX$ operations possess high concurrency for relatively large $k$.

In the final set of experiments, we examine the solvers' scalability with
respect to $k$.  We apply the solvers to matrices cfd1 and Ge87H76,
with $\tol=10^{-12}$, and vary $k$ from $100, 200$ up to $1200$ 
with increment $200$ (there are exceptions for \feast).  
The resulting solution times are plotted in 
Figures \ref{fig:cpu-vark-cfd1} and \ref{fig:cpu-vark-Ge87H76}.
%, while detailed statistics are summarized in 
%Tables \ref{tab:cfd1-varyk} and \ref{tab:Ge87H76-varyk}.  
In both figures, the slopes of the time curves confirm that the three block 
algorithms, \feast, \mpm~ and \gn, clearly scale better with respect 
to $k$ than the Krylov subspace algorithm \eigs.  Although \eigs~can 
be the fastest for $k$ small, its solution time increases at a faster 
pace than the block methods as $k$ increases.

Among the block algorithms themselves, all three provide
comparable performances on cfd1 when computing the 
$k$ largest eigenpairs, while \feast~ is the fastest when 
computing $k$ smallest eigenpairs.   
On Ge87H76, which has a rather dense Cholesky factor, 
\feast~ is much slower in all runs up to $k=1000$ 
(runs for $k>1000$ are skipped to save time).  
%For detailed statistics, see Tables \ref{tab:cfd1-varyk} and \ref{tab:Ge87H76-varyk}.

%%%%%%%%%
\begin{figure}[htb!]
\centering
  \subfigure[$k$ largest eigenpairs, $tol=10^{-12}$]{
    \includegraphics[width=0.48\textwidth,height=0.4\textwidth]
    {cfd1-tol12-maxeig-cpu.eps}
  }
  \hfill
    \subfigure[$k$ smallest eigenpairs, $tol=10^{-12}$]{
    \includegraphics[width=0.48\textwidth,height=0.4\textwidth]
    {cfd1-tol12-mineig-cpu.eps}
  }
\caption{Comparison results of solution time for computing $k$ eigenpairs  of
the matrix cfd1}
\label{fig:cpu-vark-cfd1}

%\end{figure}
\vspace{.75cm}
%\begin{figure}[htb]

\centering
  \subfigure[$k$ largest eigenpairs, $tol=10^{-12}$]{
    \includegraphics[width=0.48\textwidth,height=0.4\textwidth]
    {Ge87H76-tol12-maxeig-cpu.eps}
  }
  \hfill
    \subfigure[$k$ smallest eigenpairs, $tol=10^{-12}$]{
    \includegraphics[width=0.48\textwidth,height=0.4\textwidth]
    {Ge87H76-tol12-mineig-cpu.eps}
  }
\caption{Comparison results on solution time for computing $k$ eigenpairs  of
the matrix Ge87H76.}
\label{fig:cpu-vark-Ge87H76}
\end{figure}

%%%%%%%%
%\begin{table}[htbp]
%  \centering
%   \setlength{\tabcolsep}{1.5pt}
%  \caption{Comparison results on cfd1 for varying $k$} 
%  \label{tab:cfd1-varyk}
%{\footnotesize
%\begin{tabular}{r|rrr rrrr rrrr rrrr rr}
%\hline
%& \multicolumn{3}{c}{\eigs} &&
% \multicolumn{3}{c}{\feast} && \multicolumn{4}{c}{\mpm}  &&
% \multicolumn{4}{c}{\gn}  \\
%\cline{2-4} \cline{6-8} \cline{10-13} \cline{15-18}
%%%%
%\input{FIG/cfd1-tab-varyk.tex}
%%%%
%\end{tabular}
%  }
%\end{table}
%
%
%\begin{table}[htbp]
%  \centering
%   \setlength{\tabcolsep}{1.5pt}
%  \caption{Comparison results on Ge87H76 for varying $k$
%  (\feast~ runs skipped for $k>1000$)} 
%  \label{tab:Ge87H76-varyk}
%{\footnotesize
%\begin{tabular}{r|rrr rrrr rrrr rrrr rr}
%\hline
%& \multicolumn{3}{c}{\eigs} &&
% \multicolumn{3}{c}{\feast} && \multicolumn{4}{c}{\mpm}  &&
% \multicolumn{4}{c}{\gn}  \\
%\cline{2-4} \cline{6-8} \cline{10-13} \cline{15-18}
%%%%
%\input{FIG/Ge87H76-tab-varyk.tex}
%%%%
%\end{tabular}
%  }
%\end{table}

\section{Concluding Remarks}
\label{sec:con}

The goal of this paper is to construct a block algorithm of high scalability 
suitable for computing relatively large numbers of exterior eigenpairs for
really large-scale matrices on modern computers.    Our strategy is simple: 
to reduce as much as possible the number of RR calls (Rayleigh-Ritz 
projections) or, in other words, to shift as much as possible computation 
burdens to SU (subspace update) steps.
This strategy is based on the following considerations.
RR steps perform small dense eigenvalue decompositions, as well as 
basis orthogonalizations, thus possessing limited concurrency.  
On the other hand, SU steps can be accomplished by block 
operations like $A$ times $X$, thus more scalable. 

To reach for maximal concurrency, we choose the power iteration
for subspace updating (and also include a Gauss-Newton method 
to test the versatility of our construction).   It is well known that the
convergence of the power method can be intolerably slow, 
preventing it from being used to drive general-purpose eigensolvers.
Therefore, the key to success reduces to whether we could accelerate
the power method sufficiently and reliably to an extent that it can 
compete in speed with Krylov subspace methods in general.
In this work, such an acceleration is accomplished mainly through 
the use of three techniques: 
(1) an augmented Rayleigh-Ritz (ARR) procedure that can provably 
accelerate convergence under mild conditions;
(2) a set of easy-to-control, low-degree polynomial accelerators; and
(3) a bold stoping rule for SU steps that essentially allows an iterate 
matrix to become numerically rank-deficient.  
Of course, the success of our construction also depends greatly on 
a set of carefully integrated algorithmic details.  The resulting 
algorithm is named \arrabit, which uses $A$ only in matrix multiplications.

Numerical experiments in Matlab on sixteen test matrices from the UF 
Sparse Matrix Collection show, convincingly in our view, that the accuracy 
and efficiency of \arrabit~ is indeed competitive to start-of-the-art eigensolvers.
Exceeding our expectations, \arrabit~ can already provide multi-fold speedups
over the benchmark solver \eigs, without explicit code parallelization and 
without running on massively parallel machines, on difficult problems.
In particular,  it often only needs two or three, sometimes just one, ARR 
projections to reach a good solution accuracy.  

There are a number of future directions worth pursuing from this point on.  
For one thing, the robustness and efficiency of \arrabit~can be further 
enhanced by refining its construction and and tuning its parameters.  
Software development and an evaluation of its parallel scalability are 
certainly important.   
The prospective of extending the algorithm to non-Hermitian matrices 
and the generalized eigenvalue problem looks promising.  
Overall, we feel that the present work has laid a solid foundation 
for these and other future activities.

\section*{Acknowledgements} 
%The authors would like to thank Chao Yang for valuable discussions on eigenvalue computation.  
Most of the computational results were obtained at the National Energy Research Scientific Computing Center (NERSC), which is supported by the Director, Office of Advanced Scientific Computing Research of the U.S. Department of Energy under contract number DE-AC02-05CH11232.

% Reference
\bibliographystyle{siam}
\bibliography{SVD,chebfun,optimization}

\end{document}